\newtheorem{lemma}{Lemma}[section]
\newtheorem{corollary}[lemma]{Corollary}
\newtheorem{theorem}[lemma]{Theorem}
\theoremstyle{definition} 
\newtheorem{definition}[lemma]{Definition}
\newtheorem{remarks}[lemma]{Remarks}
\newcommand\reals{{\mathbb R}}
\newcommand{\dg}{\sp{\text{\rm o}}}
\begin{document}

\title{Vector lattices in synaptic algebras}

\author{David J. Foulis{\footnote{Emeritus Professor, Department of
Mathematics and Statistics, University of Massachusetts, Amherst,
MA; Postal Address: 1 Sutton Court, Amherst, MA 01002, USA;
foulis@math.umass.edu.}}\hspace{.05 in}, Anna Jen\v cov\'a  and Sylvia
Pulmannov\'{a}{\footnote{ Mathematical Institute, Slovak Academy of
Sciences, \v Stef\'anikova 49, SK-814 73 Bratislava, Slovakia;
pulmann@mat.savba.sk. The second and third authors were supported by
Research and Development Support Agency under the contract No.
APVV-0178-11 and grant VEGA 2/0059/12.}}}

\date{}

\maketitle

\begin{abstract}
A synaptic algebra $A$ is a generalization of the self-adjoint part of
a von Neumann algebra. We study a linear subspace $V$ of $A$ in regard
to the question of when $V$ is a vector lattice. Our main theorem states
that if $V$ contains the identity element of $A$ and is closed under the
formation of both the absolute value and the carrier of its elements, then
$V$ is a vector lattice if and only if the elements of $V$ commute
pairwise.
\end{abstract}

\noindent{\bf Key Words:} synaptic algebra, vector lattice, effect
algebra, generalized supremum and infimum, commutative, monotone
square-root property.

\medskip

\noindent{\bf AMS Classification} 47B15 (06C15, 91P10)

\section{Introduction}

Let $A$ be a synaptic algebra \cite{FSyn, PSyn, TDSyn, SymSyn,
ComSyn, 2PSyn, P&ESyn, IdealSyn}. Numerous examples of synaptic algebras
can be found in the cited references. To help fix ideas, one may think
of the special case in which $A$ is the self-adjoint part ${\mathcal B}
\sp{sa}(\mathfrak H)$ of the algebra ${\mathcal B}(\mathfrak H)$ of all
bounded linear operators on a Hilbert space $\mathfrak H$. We shall be
particularly concerned with the mappings on $A$ given by $a\mapsto |a|
$ (the \emph{absolute value} of $a$), $a\mapsto a\sp{+}$ (the \emph{positive
part} of $a$), and $a\mapsto a\dg$ (the \emph{carrier} of $a$). For the case
in which $A={\mathcal B}\sp{sa}(\mathfrak H)$ and $T\in{\mathcal B}\sp{sa}
(\mathfrak H)$, the absolute value of $T$ is $|T|=(T\sp{2})\sp{1/2}$,
$T\sp{+}=\frac12(|T|+T)$, and $T\dg$ is the (orthogonal) projection onto
the closure of the range of $T$.

Among other things, $A$ is a partially ordered linear space. For the
case in which $A={\mathcal B}\sp{sa}(\mathfrak H)$, $\langle\cdot,\cdot
\rangle$ is the inner product on ${\mathfrak H}$, and $S,T\in{\mathcal B}
\sp{sa}(\mathfrak H)$, the partial order is given by $S\leq T
\Leftrightarrow\langle Sx,x\rangle\leq\langle Tx,x\rangle$ for all
$x\in{\mathfrak H}$.

In Section \ref{sc:Prelim}, we provide a brief review of partially
ordered sets and effect algebras that will be needed in what follows.
In Section \ref{sc:BasicProps}, we recall some of the main features
of a synaptic algebra. The notions of a generalized infimum and supremum
are studied in Section \ref{sc:Ginfsup}.

In Section \ref{sc:VecLat} we shall be considering a linear subspace
$V$ of $A$. According to \cite[Theorem 2.8]{ComSyn}, if $V$ is
closed under the mapping $a\mapsto a\sp{+}$, and if $ab=ba$ for all
$a,b\in V$, then $V$ is a vector lattice. In Lemma \ref{lm:CondsforVL}
below we provide more details of the proof of this result and we prove
that, conversely, if $V$ is a vector lattice, the unit $1\in A$ belongs
to $V$, and $V$ is closed under the mappings $a\mapsto a\sp{+}$ and $a
\mapsto a\dg$ then $ab=ba$ for all $a,b\in V$. Theorem \ref{th:TopProp6}
below contains our main results concerning conditions for $V$ to be a
vector lattice.

Many of the results in this paper are motivated by the work of D.
Topping in \cite{TopVL}. Our work differs from Topping's in the
following respects: (1) All of our results pertain to a synaptic algebra
$A$, which considerably generalizes ${\mathcal B}\sp{sa}({\frak H})$.
(2) Our proofs do not require any topological considerations---our
arguments are strictly algebraic and order-theoretic. (3) We make
substantial use of the condition that the linear space $V$ is closed
under the carrier mapping $a\mapsto a\dg$.

\section{Preliminaries} \label{sc:Prelim}

We begin by reviewing some notation, some nomenclature, and some facts
that we shall need later. In what follows, the symbol $:=$ means `equals
by definition,' the abbreviation `iff' means `if and only if,' and
$\reals$ is the ordered field of real numbers.

Let $X$ be a partially ordered set (poset) with partial order relation
$\leq$ and let $x,y\in X$. The elements $x$ and $y$ are said to be
\emph{comparable} iff $x\leq y$ or $y\leq x$. We write $x<y$ to mean
that $x\leq y$ but $x\not=y$. The \emph{infimum} (greatest lower bound)
of $x$ and $y$ in $X$---if it exists---is written as $x\wedge y$, or
alternatively as $x\wedge\sb{X}y$ if it is necessary to make clear where
the infimum is calculated. Likewise, the \emph{supremum} (least upper
bound) of $x$ and $y$ in $X$---if it exists---is written as $x\vee y$ or
as $x\vee\sb{X}y$. We say that the poset $X$ is \emph{lattice ordered},
or simply a \emph{lattice}, if $x\wedge y$ and $x\vee y$ exist for all
$x,y\in X$. A lattice $X$ is \emph{distributive} iff $x\wedge(y\vee z)=
(x\wedge y)\vee(x\wedge z)$, or (as it turns out) equivalently, iff
$x\vee(y\wedge z)=(x\vee y)\wedge(x\vee z)$ for all $x,y,z\in X$.

If there is a smallest element $0\in X$, then the elements $x,y\in X$
are \emph{disjoint} iff $x\wedge y$ exists and $x\wedge y=0$, i.e., iff
for all $z\in X$, $z\leq x,y\Rightarrow z=0$. If there is both a smallest
element $0\in X$ and a largest element $1\in X$, then $X$ is called a
\emph{bounded poset}.  If $X$ is a bounded lattice, then the elements
$x,y\in X$ are \emph{complements} iff $x\wedge y=0$ and $x\vee y=1$, and
the lattice $X$ is \emph{complemented} iff every element $x\in X$ has at
least one complement $y\in X$. An \emph{orthocomplementation} on the
bounded lattice $X$ is a mapping $x\mapsto x\sp{\perp}$ such that, for
all $x,y\in X$, $x$ and $x\sp{\perp}$ are complements, $x=(x\sp{\perp})
\sp{\perp}$, and $x\leq y\Rightarrow y\sp{\perp}\leq x\sp{\perp}$.

An \emph{orthomodular lattice} (OML) \cite{Beran, Kalm} is a bounded
lattice $X$ equipped with an orthocomplementation $x\mapsto x\sp{\perp}$
such that the \emph{orthomodular identity} $x\leq y\Rightarrow y=x\vee
(y\wedge x\sp{\perp})$ holds for all $x,y\in X$. A \emph{Boolean algebra}
is a bounded, complemented, and distributive lattice---or equivalently
it is a distributive OML.

An \emph{MV-algebra} \cite{Chang} is an algebra $(X;0,1,\sp{\perp},+)$
of type (0,0,1,2) such that, for all $x,y\in X$, $x+(y+z)=(x+y)+z$,
$x+y=y+x$, $x+0=x$, $(x\sp{\perp})\sp{\perp}=x$, $0\sp{\perp}=1$,
$x+x\sp{\perp}=1$, and $x+(x+y\sp{\perp})\sp{\perp}=y+(y+x\sp{\perp})
\sp{\perp}$. The MV-algebra $X$ is organized into a poset by defining
$x\leq y\Leftrightarrow y=x+(x+y\sp{\perp})\sp{\perp}$ for $x,y\in X$,
and then $X$ is a bounded distributive lattice with $x\vee y=x+(x+y
\sp{\perp})\sp{\perp}$. Following the traditional mathematical convention,
we usually write the MV-algebra $(X;0,1,\sp{\perp},+)$ simply as $X$.

As is well known, a Boolean algebra is the same thing as an MV-algebra $X$
that satisfies the condition $x+x=x$ for all $x\in X$.

An \emph{effect algebra} \cite{FandB} is a partial algebra $(E;0,1,
\sp{\perp},\oplus)$ of type $(0,0,1,2)$ such that (1) for all $d,e,f
\in E$ the partially defined binary operation $\oplus$, called the
\emph{orthosummation}, satisfies $d\oplus(e\oplus f)=(d\oplus e)
\oplus f$ and $e\oplus f=f\oplus e$, both equalities in the sense that
if either side is defined, then both sides are defined and the equality
holds; moreover it is required that (2) $e\sp{\perp}$, called the \emph
{orthosupplement} of $e$, is the unique element in $E$ for which $e
\oplus e\sp{\perp}$ is defined and $e\oplus e\sp{\perp}=1$; and it is
further required that (3) if $e\oplus 1$ is defined, then $e=0$. In some
accounts, $e\sp{\perp}$ is written as $e\sp{\prime}$.

An effect algebra $(E;0,1,\sp{\perp},\oplus)$ is usually written
simply as $E$. The relation $\perp$, called \emph{orthogonality}, is
defined on $E$ by $e\perp f$ iff the orthosum $e\oplus f$ is defined.
Also, $E$ is organized into a bounded poset by defining $e\leq f$,
called the \emph{induced partial order} on $E$, iff there exists $d
\in E$ such that $e\perp d$ and $e\oplus d=f$. Such an element $d$
is uniquely determined, and by definition $f\ominus e:=d$.  For
$e,f\in E$, we have $(e\sp{\perp})\sp{\perp}=e$ and $e\leq f
\Rightarrow f\sp{\perp}\leq e\sp{\perp}$, whence $e\perp f
\Leftrightarrow e\leq f\sp{\perp}\Leftrightarrow f\leq e\sp{\perp}$.

If $E$ is an effect algebra and $e,f\in E$, then $e$ and $f$ are said
to be \emph{compatible} iff there exist $e\sb{1}, f\sb{1}, d\in E$ such
that $e\sb{1}\perp f\sb{1}$, $(e\sb{1}\oplus f\sb{1})\perp d$, $e=e
\sb{1}\oplus d$ and $f=f\sb{1}\oplus d$. For instance, two comparable
elements of $E$ as well as two orthogonal elements of $E$ are compatible.
Also, if $e$ and $f$ are compatible, then so are $e$ and $f\sp{\perp}$.
A maximal set of pairwise compatible elements of $E$ is called a \emph
{block}. Clearly (Zorn) $E$ is covered by its own blocks.

A subset $S$ of an effect algebra $E$ is called a \emph{sub-effect
algebra} of $E$ iff $0,1\in S$ and for $s,t\in S$, $s\sp{\perp}\in S$
and $s\perp t\Rightarrow s\oplus t\in S$. If $S$ is a sub-effect algebra
of $E$, then $S$ is an effect algebra in its own right under the
restriction to $S$ of the orthosummation and orthosupplementation
operations on $E$, and the resulting induced partial order on $S$ is the
restriction to $S$ of the partial order relation on $E$.

If, the effect algebra $E$ is lattice ordered, i.e., if as a bounded poset
$E$ is a lattice, it is called a \emph{lattice effect algebra}.

\begin{theorem} \label{th:JencaP2.1} {\rm\cite[Proposition 2.1]{JencaMV}}
Suppose that $E$ is a lattice effect algebra. Then the following conditions
are mutually equivalent{\rm: (i)} For all $e,f\in E$, $(e\vee f)\ominus e=f
\ominus(e\wedge f)$. {\rm(ii)} For all $e,f\in E$, $e\wedge f=0\Rightarrow
e\perp f$. {\rm(iii)} For all $e,f\in E$, $e\ominus(e\wedge f)\perp f$.
{\rm(iv)} For all $e,f\in E$, $e$ is compatible with $f$.
\end{theorem}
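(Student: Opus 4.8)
The plan is to prove the cyclic chain $(i)\Rightarrow(iii)\Rightarrow(iv)\Rightarrow(ii)\Rightarrow(i)$. Three of these implications are straightforward bookkeeping with the effect-algebra axioms; the real content sits in $(ii)\Rightarrow(i)$, for which I would first isolate two auxiliary facts about an arbitrary lattice effect algebra $E$. First, for a fixed $g\in E$ the map $x\mapsto x\ominus g$ is an order-isomorphism of $[g,1]$ onto $[0,g\sp{\perp}]$, with inverse $y\mapsto y\oplus g$; since order-isomorphisms preserve existing meets and joins, this yields $(e\ominus g)\wedge(f\ominus g)=(e\wedge f)\ominus g$ and $(e\ominus g)\vee(f\ominus g)=(e\vee f)\ominus g$ whenever $g\le e$ and $g\le f$. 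Second, and this is the crux, if $a\wedge b=0$ and $a\perp b$ then $a\oplus b=a\vee b$; I would prove this from cancellativity of $\oplus$ by setting $b\sb{0}:=(a\vee b)\ominus a$ (defined since $a\le a\vee b$), noting that $a\oplus b\sb{0}=a\vee b\le a\oplus b$ gives $b\sb{0}\le b$, then that $b\le a\vee b=a\oplus b\sb{0}$ forces $b\ominus b\sb{0}\le a$, and since also $b\ominus b\sb{0}\le b$ we get $b\ominus b\sb{0}\le a\wedge b=0$, hence $b\sb{0}=b$ and $a\oplus b=a\vee b$.

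For $(i)\Rightarrow(iii)$: fix $e,f$, put $g:=e\wedge f$, and consider the orthosum $(e\ominus g)\oplus g\oplus\big((e\vee f)\ominus e\big)$. Grouped as $\big[(e\ominus g)\oplus g\big]\oplus\big[(e\vee f)\ominus e\big]=e\oplus\big((e\vee f)\ominus e\big)=e\vee f$, it is defined; hence by associativity $(e\ominus g)\oplus\big[g\oplus\big((e\vee f)\ominus e\big)\big]$ is defined, and using (i) the inner sum equals $g\oplus(f\ominus g)=f$, so $(e\ominus g)\oplus f$ is defined, i.e.\ $e\ominus(e\wedge f)\perp f$.

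For $(iii)\Rightarrow(iv)$: given $e,f$, take the compatibility witnesses $e\sb{1}:=e\ominus(e\wedge f)$, $f\sb{1}:=f\ominus(e\wedge f)$, $d:=e\wedge f$; then $e=e\sb{1}\oplus d$ and $f=f\sb{1}\oplus d$, while $e\sb{1}\perp f\sb{1}$ follows from $e\sb{1}\perp f$ (apply (iii) to the pair $e,f$) together with $f\sb{1}\le f$, and $(e\sb{1}\oplus f\sb{1})\perp d$ follows from $f\sb{1}\perp e=e\sb{1}\oplus d$ (apply (iii) to the pair $f,e$). For $(iv)\Rightarrow(ii)$: if $e\wedge f=0$ and $e=a\oplus d$, $f=b\oplus d$ with $a\perp b$ are compatibility witnesses, then $d$ is a common lower bound of $e$ and $f$, so $d\le e\wedge f=0$, whence $e=a\perp b=f$.

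Finally, $(ii)\Rightarrow(i)$: put $g:=e\wedge f$. By the first auxiliary fact $(e\ominus g)\wedge(f\ominus g)=(e\wedge f)\ominus g=0$, so (ii) gives $(e\ominus g)\perp(f\ominus g)$, and then the second auxiliary fact gives $(e\ominus g)\oplus(f\ominus g)=(e\ominus g)\vee(f\ominus g)=(e\vee f)\ominus g$. Since $e=(e\ominus g)\oplus g$, we conclude $(e\vee f)\ominus e=\big((e\vee f)\ominus g\big)\ominus(e\ominus g)=\big((e\ominus g)\oplus(f\ominus g)\big)\ominus(e\ominus g)=f\ominus g=f\ominus(e\wedge f)$, which is (i). I expect the only real obstacle to be the lemma that disjointness together with orthogonality forces the orthosum to coincide with the join — and the observation that condition (ii) is exactly what supplies this hypothesis inside the proof of $(ii)\Rightarrow(i)$; everything else reduces to careful use of the associativity, commutativity and cancellativity of $\oplus$.
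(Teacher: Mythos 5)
The paper does not prove this theorem at all: it is quoted verbatim from Jen\v{c}a's article (the citation {\rm\cite[Proposition 2.1]{JencaMV}} is the entire ``proof''), so there is no in-paper argument to compare yours against. Judged on its own, your cyclic proof $(i)\Rightarrow(iii)\Rightarrow(iv)\Rightarrow(ii)\Rightarrow(i)$ is correct and self-contained. The two auxiliary facts are sound: the map $x\mapsto x\ominus g$ really is an order-isomorphism of $[g,1]$ onto $[0,g\sp{\perp}]$, and since meets and joins computed in those intervals agree with those computed in $E$ (the meet of elements of $[g,1]$ lies in $[g,1]$ because $g$ is a lower bound, and dually for joins in $[0,g\sp{\perp}]$), the transfer of $\wedge$ and $\vee$ across $\ominus g$ is legitimate. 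Your second lemma --- that $a\wedge b=0$ together with $a\perp b$ forces $a\oplus b=a\vee b$ --- is proved correctly via the cancellation law, and you are right that it is the crux of $(ii)\Rightarrow(i)$. The remaining implications are careful but routine applications of associativity and cancellation; in particular your verification in $(iii)\Rightarrow(iv)$ that the witnesses $e\sb{1}=e\ominus(e\wedge f)$, $f\sb{1}=f\ominus(e\wedge f)$, $d=e\wedge f$ satisfy both $e\sb{1}\perp f\sb{1}$ and $(e\sb{1}\oplus f\sb{1})\perp d$ matches exactly the definition of compatibility used in Section 2 of the paper. No gaps.
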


\begin{definition} \label{df:MV,OML,Bool-EA}
Let $E$ be a lattice effect algebra. Then:
\begin{enumerate}
\item[(1)] If any one, whence all four of the conditions in Theorem
\ref{th:JencaP2.1} hold, then $E$ is called an \emph{MV-effect algebra}.
In particular, $E$ is an MV-effect algebra iff disjoint pairs in $E$
are orthogonal.
\item[(2)] $E$ is an \emph{OML-effect algebra} iff orthogonal pairs
in $E$ are disjoint.
\item[(3)] $E$ is a \emph{Boolean effect algebra} iff disjoint pairs
in $E$ coincide with orthogonal pairs in $E$.
\end{enumerate}
\end{definition}

Every block in a lattice effect algebra $E$ is both a sublattice and a
sub-effect algebra of $E$, and as such it is an MV-effect algebra \cite{ZR}.
Thus, every lattice effect algebra is covered by sub-MV-effect algebras.

An MV-effect algebra $E$ can be organized into an MV-algebra by defining
$e+f:=e\oplus(e\sp{\perp}\wedge f)$ for all $e,f\in E$. Conversely, an
MV-algebra $X$ can be organized into an MV-effect algebra by defining, for
$e,f\in X$, $e\perp f$ iff $e\leq f\sp{\perp}$, in which case $e\oplus f
:=e+f$. In this way, \emph{MV-effect algebras and MV-algebras are
mathematically equivalent structures.}

If $E$ is an OML-effect algebra, then $E$ is an OML with orthocomplementation
$e\mapsto e\sp{\perp}$. Conversely, if $X$ is an OML, then $X$ is organized
into an OML-effect algebra by defining, for $e,f\in X$, $e\perp f$ iff $e
\leq f\sp{\perp}$, in which case $e\oplus f:=e\vee f$. In this way, \emph
{OML-effect algebras and OMLs are mathematically equivalent structures.}

If $E$ is a Boolean effect algebra, then $E$ is a Boolean algebra and for
every $e\in E$, $e\sp{\perp}$ is the unique complement of $e$ in $E$.
Conversely, if $X$ is a Boolean algebra, then $X$ is organized into a
Boolean effect algebra by defining, for $e,f\in X$, $e\perp f$ iff $e\wedge
f=0$, in which case $e\oplus f:=e\vee f$, and by defining $e\sp{\perp}$ to
be the unique complement of $e$ in $E$. In this way, \emph{Boolean
effect algebras and Boolean algebras are mathematically equivalent
structures.}

If an effect-algebra property is attributed to an MV-algebra, an OML, or
a Boolean algebra $X$, it is understood that the property in question holds
when $X$ is organized into an effect algebra as indicated above. For
instance, two elements $e,f$ of an OML $X$ are compatible iff they are
compatible when $X$ is organized into an OML-effect algebra. It is well
known that a Boolean algebra is the same thing as an OML in which every
pair of elements is a compatible pair.

We shall refer to a partially ordered real linear space as a \emph{vector
lattice} iff it is lattice ordered. It is well known that, as a lattice,
a vector lattice is distributive---more generally, a lattice-ordered
group (an {$\ell$}-group) is distributive \cite[Theorem 4]{Birkhoff}.
In an {$\ell$}-group, there are traditional definitions of the positive
and negative parts of an element and of the absolute value of an element
\cite[pages 293 and 295]{Birkhoff}---these, however, may differ from the
corresponding notions for a synaptic algebra. (See Remarks \ref
{rms:traditional} below.)

\section{Basic properties of a synaptic algebra} \label{sc:BasicProps}

In what follows, \emph{we assume that $A$ is a synaptic algebra}.  Axioms
for the synaptic algebra $A$ can be found in \cite{FSyn} and will not be
repeated here, however we shall review some of the important structural
aspects of $A$ that will be needed later in the paper.

Associated with $A$ is a so-called \emph{enveloping algebra} $R\supseteq A$
which is (at least) an associative linear algebra over $\reals$ with a unit
element $1$, and $1\in A$. For the special case in which $A={\mathcal B}
\sp{sa}(\mathfrak H)$, the enveloping algebra is $R={\mathcal B}(\mathfrak H)$.
To avoid trivialities, we assume that $1\not=0$; hence, as is customary, we
may identify each real number $\lambda\in\reals$ with $\lambda1\in R$.

As a subset of $R$, the synaptic algebra $A$ is a real linear space; moreover,
under a partial order relation $\leq$, it forms an \emph{order-unit space}
with order unit $1$ \cite[pp. 67--68]{Alfsen}, i.e., $A$ is an Archimedean
partially ordered real linear space and $1$ is a (strong) order unit in $A$.
Moreover, $A$ is a normed linear space under the \emph{order-unit norm}
defined for $a\in A$ by $\|a\|:=\inf\{0<\lambda\in\reals:-\lambda\leq a
\leq\lambda\}$.

The \emph{positive cone} in $A$ is denoted and defined by $A\sp{+}:=\{a\in A:
0\leq a\}$. If $V$ is a linear subspace of $A$, we understand that $V$ is
organized into a partially ordered real linear space under the restriction
of the partial order $\leq$ on $A$. Thus, if $V$ is a linear subspace of $A$
and $1\in V$, then $V$ is an order-unit space with order unit $1$.

Let $a,b\in A$. The product $ab$ is calculated in $R$ and may or may not
belong to $A$; however, if $a$ commutes with $b$, i.e., $ab=ba$, then $ab
\in A$. If $0\leq a,b$ and $ab=ba$, it turns out that $0\leq ab$. Also,
$a\sp{2}\in A$, whence the \emph{Jordan product} $a\odot b:=\frac12(ab
+ba)=\frac12[(a+b)\sp{2}-(a\sp{2}+b\sp{2})]\in A$.  Let $c:=2(a\odot b)$.
Then $aba=a\odot c-a\sp{2}\odot b$, so $aba\in A$. The mapping $b\mapsto
aba$, called the \emph{quadratic mapping} determined by $a$, is both linear
and order preserving. Moreover, if $0\leq b$, then $aba=0\Rightarrow ab=ba
=0$; in particular, putting $b=1$, we infer that $a\sp{2}=0\Rightarrow a=0$.

For $a,b\in A$, we write $aCb$ iff $ab=ba$, and for $B\subseteq A$, we
define the \emph{commutant} and the \emph{bicommutant} of $B$ by
\[
C(B):=\{a\in A:aCb\text{\ for all\ }b\in B\}\text{\ and\ } CC(B):=
C(C(B)),
\]
respectively. Clearly, $C(B)$ is a linear subspace of $A$ and $1\in C(B)$,
whence, under the restriction of the partial order on $A$, $C(B)$ is an
order-unit space with order unit $1$. For the case in which $B=\{b\}$ we
write $C(b):=C(\{b\})$ and $CC(b):=CC(\{b\})$. If $a\in CC(b)$, we say
that $a$ \emph{double commutes} with $b$.

We shall say that the subset $B\subseteq A$ is \emph{commutative} iff
$aCb$ holds for all $a,b\in B$, i.e., iff $B\subseteq C(B)$. A maximal
commutative subset of $A$ is called a \emph{C-block} (Cf. \cite[Definition
5.1 and Theorem 5.9]{FPMRC}). Evidently, $B\subseteq A$ is a C-block
iff $B=C(B)$. Any commutative subset of $A$, and in particular a
singleton set $\{a\}\subseteq A$, can be enlarged to a C-block (Zorn).

Evidently, for $B, D\subseteq A$, we have (i) $B\subseteq CC(B)$ and (ii)
$B\subseteq D\Rightarrow C(D)\subseteq C(B)$, and it follows from (i) and
(ii) that (iii) $C(CC(B))=C(B)$.

Suppose that $B\subseteq A$ with $B\subseteq C(B)$. Then by (i) and (iii)
above, $CC(B)\subseteq C(B)=C(CC(B))$, whence $CC(B)$ is a commutative
order-unit space with order unit $1$.

The ``unit interval" in $A$ is denoted by $E:=\{e\in A:0\leq e\leq 1\}$,
elements $e\in E$ are called \emph{effects}, and $E$ forms an effect
algebra $(E;0,1,\sp{\perp},\oplus)$ as follows: For $e,f\in E$, $e$ is
orthogonal to $f$, in symbols $e\perp f$, iff $e+f\leq 1$, and then
$e\oplus f:=e+f$. The orthosupplement of $e\in E$ is given by $e
\sp{\perp}:=1-e$ and the induced partial order on $E$ coincides
with the restriction to $E$ of the partial order $\leq$ on $A$. Clearly,
if $e\leq f$, then $f\ominus e=f-e$. It can be shown that $E=\{e\in A:
e\sp{2}\leq e\}$.

An important sub-effect algebra of $E$ is the set $P:=\{p\in A:p\sp{2}=
p\}$ of \emph{projections} in $A$, which turns out to be an OML-effect
algebra. For $p,q\in P$, the infimum and the supremum of $p$ and $q$ in
$P$ are denoted by $p\wedge q$ and $p\vee q$, respectively. Since $P
\subseteq E$, we can regard $p$ and $q$ as effects, and it turns out
that $p\wedge q$ and $p\vee q$ are also the infimum and supremum,
respectively, of $p$ and $q$ in $E$. Therefore, no confusion will
result if we denote an existing infimum or supremum of effects
$e,f\in E$ by $e\wedge f$ or $e\vee f$.

If $p,q\in P$, then it is not difficult to show that $p\perp q
\Leftrightarrow pq=qp=0$, in which case $p\oplus q=p+q=p\vee q$.
Moreover, if $p\leq q$, then $q-p=q\ominus p=q\wedge p\sp{\perp}
\in P$.  Furthermore, if $pCq$, then $p\wedge q=pq$ and $p\vee q
=p+q-pq$.

If $e\in E$ and $p\in P$, then $e\leq p$ iff $e=pep$ iff $e=ep$ iff $e=pe$.
Also, $p\leq e$ iff $p=pe$ iff $p=ep$. Thus, if an effect $e$ and a
projection $p$ are comparable, then $eCp$.

\begin{lemma} \label{lm:compprojs}
Let $p,q\in P$. Then the following conditions are mutually equivalent{\rm:}
{\rm(i)} $p$ is compatible with $q$ in $E$. {\rm(ii)} $pCq$. {\rm(iii)}
$p$ is compatible with $q$ in $P$.
\end{lemma}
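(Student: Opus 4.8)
The plan is to prove the three implications (ii)$\Rightarrow$(iii)$\Rightarrow$(i)$\Rightarrow$(ii) and so close the cycle. The work is concentrated in (i)$\Rightarrow$(ii); the other two are essentially bookkeeping.

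For (ii)$\Rightarrow$(iii) and (iii)$\Rightarrow$(i): assuming $pCq$, put $d:=pq=qp$, $e_{1}:=p-pq=p(1-q)$ and $f_{1}:=q-pq=q(1-p)$. Since $p$ and $q$ commute, $d$, $e_{1}$, $f_{1}$ are projections, and routine computations with $p^{2}=p$, $q^{2}=q$ and commutativity give $e_{1}f_{1}=f_{1}e_{1}=0$, $e_{1}d=de_{1}=0$, $f_{1}d=df_{1}=0$, and $(e_{1}+f_{1})d=d(e_{1}+f_{1})=0$. Recalling that orthogonal projections $r,s$ satisfy $r\perp s$ in $P$ with $r\oplus s=r+s$, these identities say precisely that $e_{1}\perp f_{1}$, $(e_{1}\oplus f_{1})\perp d$, $p=e_{1}\oplus d$ and $q=f_{1}\oplus d$ hold in $P$, i.e.\ $p$ is compatible with $q$ in $P$. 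Then (iii)$\Rightarrow$(i) is immediate, because $P$ is a sub-effect algebra of $E$: a triple $e_{1},f_{1},d\in P$ witnessing compatibility of $p$ and $q$ in $P$ lies in $E$ and, since the orthosummation on $P$ is the restriction of the one on $E$, it still witnesses compatibility of $p$ and $q$ in $E$.

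For (i)$\Rightarrow$(ii): fix $e_{1},f_{1},d\in E$ with $e_{1}\perp f_{1}$, $(e_{1}\oplus f_{1})\perp d$, $p=e_{1}\oplus d$, $q=f_{1}\oplus d$. From $p=e_{1}+d$ and $q=f_{1}+d$ we get $d\leq p$ and $d\leq q$, so, since $p,q\in P$ and $d\in E$, the facts recalled in Section~\ref{sc:BasicProps} give $pd=dp=d$ and $qd=dq=d$. The relation $(e_{1}\oplus f_{1})\perp d$ means $e_{1}+f_{1}+d\leq 1$; feeding in $q=f_{1}+d$ yields $e_{1}+q\leq 1$, that is $e_{1}\leq q^{\perp}$, and symmetrically $f_{1}\leq p^{\perp}$. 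Since $q^{\perp},p^{\perp}\in P$, these inequalities force $qe_{1}=e_{1}q=0$ and $pf_{1}=f_{1}p=0$. Hence $pq=p(f_{1}+d)=pf_{1}+pd=d$ and $qp=q(e_{1}+d)=qe_{1}+qd=d$, so $pq=qp$ $(=d)$, which is (ii).

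The only step needing an idea rather than mechanical verification is (i)$\Rightarrow$(ii), and its crux is the observation that the single orthogonality $(e_{1}\oplus f_{1})\perp d$, once read through $p=e_{1}\oplus d$ and $q=f_{1}\oplus d$, squeezes $e_{1}$ below $q^{\perp}$ and $f_{1}$ below $p^{\perp}$; after that, the interaction between effects and projections recorded in Section~\ref{sc:BasicProps} finishes the argument. I do not expect a genuine obstacle here.
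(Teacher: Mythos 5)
Your proof is correct and follows essentially the same route as the paper: the same decomposition $d=pq$, $e_1=p-pq$, $f_1=q-pq$ for (ii)$\Rightarrow$(iii), and the same key observations for (i)$\Rightarrow$(ii) (the orthogonality $e_1+f_1+d\leq 1$ forces $e_1\leq q^{\perp}$, and $d\leq p,q$ gives $pd=dp=d=qd=dq$), differing only in which factor you expand when computing $pq$.
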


\begin{proof}
(i) $\Rightarrow$ (ii). Assume that $p$ is compatible with $q$ in $E$.
Then there exist $e,f,d\in E$ such that $e+f+d\leq 1$, $p=e+d$, and $q=f+d$.
Thus $e+q=e+f+d\leq 1$, so $e\leq 1-q\in P$, whence $e=e(1-q)=e-eq$, and it
follows that $eq=qe=0$. Also, $d\leq f+d=q\in P$, so $dq=qd=d$. Thus, $pq=
(e+d)q=eq+dq=d$. Likewise $qp=q(e+d)=qe+qd=d$, and we have $pq=d=qp$.

(ii) $\Rightarrow$ (iii). Suppose that $pCq$ and put $e:=p-p\wedge q=p-pq$,
$f:=q-p\wedge q=q-pq$, and $d:=p\wedge q=pq$. Clearly, $e,f,d\in P$,
$e+f+d=p+q+pq=p\vee q\leq 1$, $p=e+d$, and $q=f+d$, so $p$ is compatible
with $q$ in $P$. That (iii) $\Rightarrow$ (i) is obvious.
\end{proof}

\begin{lemma} \label{lm:apos&pCa}
If $a\in A\sp{+}$ and $p\in P$, then $pCa\Leftrightarrow 0\leq p\odot a$.
\end{lemma}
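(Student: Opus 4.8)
The forward implication is routine: if $pCa$, then since $a\in A\sp{+}$ and $p\in P\subseteq A\sp{+}$ are commuting positive elements, the product $pa$ lies in $A\sp{+}$ (as recorded in Section \ref{sc:BasicProps}), and because $p\odot a=\frac12(pa+ap)=pa$, we get $0\leq p\odot a$. So the content of the lemma is the reverse implication, and that will be the main obstacle.

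For the converse, suppose $0\leq p\odot a$; I want to conclude $pa=ap$. The natural tool is the quadratic mapping $b\mapsto pbp$ together with the cancellation fact from Section \ref{sc:BasicProps}: for $0\leq b$, $pbp=0\Rightarrow pb=bp=0$. The plan is to apply this not to $p$ but to the complementary projection $p\sp{\perp}=1-p$ acting on a suitably chosen positive element built from $a$ and $p$. Concretely, consider $c:=p\sp{\perp}(p\odot a)p\sp{\perp}$. Since $b\mapsto p\sp{\perp}bp\sp{\perp}$ is order preserving and $0\leq p\odot a$, we have $0\leq c$. On the other hand, expanding $p\odot a=\frac12(pa+ap)$ and using $p\sp{\perp}p=pp\sp{\perp}=0$, every term $p\sp{\perp}(pa)p\sp{\perp}$ and $p\sp{\perp}(ap)p\sp{\perp}$ collapses to $0$, so $c=0$. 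Thus $p\sp{\perp}(p\odot a)p\sp{\perp}=0$ with $p\odot a\in A\sp{+}$, and the cancellation property yields $p\sp{\perp}(p\odot a)=(p\odot a)p\sp{\perp}=0$, i.e. $(1-p)(p\odot a)=0$, so $p\odot a=p(p\odot a)$ and symmetrically $p\odot a=(p\odot a)p$. Hence $p\odot a\in CC(p)$ is fixed by left and right multiplication by $p$; in particular $p\odot a=p(p\odot a)p$.

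It then remains to leverage $p\odot a=p(p\odot a)p$ to force $pa=ap$. Writing out $p\odot a=\frac12(pa+ap)$ and multiplying the identity on left and right by $p$, one gets $\frac12(pa+ap)=\frac12(papa\cdot\text{-type terms})$; more cleanly, from $p(p\odot a)=p\odot a$ we read $\frac12(pa+pap)=\frac12(pa+ap)$, hence $pap=ap$, and symmetrically from $(p\odot a)p=p\odot a$ we get $pap=pa$. Therefore $ap=pap=pa$, which is exactly $pCa$. I expect the only delicate point is the justification that each intermediate quantity (e.g. $p\odot a$, and the products $p(p\odot a)$, $pap$) genuinely lies in $A$ so that the order-theoretic and cancellation tools apply; but this is guaranteed because $p$ commutes with $1$ and $p\odot a$, and the quadratic mapping determined by $p$ (and by $p\sp{\perp}$) carries $A$ into $A$, as recalled in Section \ref{sc:BasicProps}. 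With those membership checks in hand, the argument is purely algebraic and order-theoretic, in keeping with the paper's stated methodology.
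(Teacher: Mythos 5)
Your proof is correct and follows essentially the same route as the paper's: the paper sets $b:=pa+ap=2(p\odot a)\geq 0$, observes $p\sp{\perp}bp\sp{\perp}=0$, applies the cancellation property of the quadratic mapping to get $pCb$, and expands $pb=bp$ to conclude $pa=ap$, which is your argument up to the harmless factor of $2$. (The only stray remark is your claim that $p\odot a\in CC(p)$, which is neither justified nor needed; all you use is $p(p\odot a)=p\odot a=(p\odot a)p$.)
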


\begin{proof}
Assume that $0\leq a\in A$ and $p\in P$. If $pCa$, than since $0\leq p,a$,
we have $0\leq pa=p\odot a$. Conversely, suppose $0\leq p\odot a$. Then
$b:=pa+ap=2(p\odot a)\geq 0$. Clearly, $p\sp{\perp}bp\sp{\perp}=0$, whence
$b-pb=p\sp{\perp}b=0=bp\sp{\perp}=b-bp$, so $pCb$. Therefore, $pa+pap=pb
=bp=pap+ap$, and it follows that $pa=ap$.
\end{proof}

It turns out that $A\sp{+}=\{b\sp{2}:b\in A\}=\{b\sp{2}:b\in A\sp{+}\}$;
moreover, if $a\in A\sp{+}$, there is a unique element---the \emph{square
root} of $a$---denoted by $a\sp{1/2}\in A\sp{+}$ such that $(a\sp{1/2})
\sp{2}=a$. Furthermore, if $a\in A\sp{+}$, then $a\sp{1/2}\in CC(a)$.

\begin{lemma} \label{lm:sqrtprops}
Let $0\leq a,b\in A$. Then{\rm: (i)} $C(a)=C(a\sp{1/2})=C(a\sp{2})$. {\rm(ii)}
$aCb\Leftrightarrow a\sp{1/2}Cb\sp{1/2}$. {\rm(iii)} If $aCb$, then
$a\leq b\Leftrightarrow a\sp{2}\leq b\sp{2}$. {\rm(iv)} If $aCb$ then $a\leq b
\Leftrightarrow a\sp{1/2}\leq b\sp{1/2}$.
\end{lemma}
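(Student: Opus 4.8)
The plan is to dispose of (i) and (ii) by pure commutant bookkeeping, then reduce (iv) to (iii), so that all the real content is concentrated in one implication of (iii).

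For (i), I would start from two elementary observations: first, for every $x\in A$ we have $C(x)\subseteq C(x^{2})$, since $xy=yx$ forces $xy^{2}=y^{2}x$; second, by the fact recalled just before the lemma, $a^{1/2}\in CC(a)$, so $a^{1/2}$ commutes with every element of $C(a)$ and therefore $C(a)\subseteq C(a^{1/2})$. Combining the second observation with the first applied to $x=a^{1/2}$ gives $C(a^{1/2})\subseteq C((a^{1/2})^{2})=C(a)$, hence $C(a)=C(a^{1/2})$. Since $a^{2}\in A^{+}$ and, by uniqueness of square roots, $(a^{2})^{1/2}=a$, the same identity applied with $a^{2}$ in place of $a$ yields $C(a^{2})=C((a^{2})^{1/2})=C(a)$. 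Part (ii) is then immediate: $aCb\Leftrightarrow b\in C(a)=C(a^{1/2})\Leftrightarrow a^{1/2}\in C(b)=C(b^{1/2})\Leftrightarrow a^{1/2}Cb^{1/2}$.

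For (iii), the forward implication is straightforward: if $aCb$ and $a\le b$, then $b-a$ and $b+a$ are commuting positive elements, so their product $(b-a)(b+a)=b^{2}-a^{2}$ (the cross terms cancel because $ab=ba$) is positive, i.e.\ $a^{2}\le b^{2}$. Granting (iii), part (iv) follows formally: by (ii) we have $a^{1/2}Cb^{1/2}$, and since $(a^{1/2})^{2}=a$ and $(b^{1/2})^{2}=b$, applying (iii) in the appropriate direction to the pair $a^{1/2},b^{1/2}$ turns $a\le b$ into $a^{1/2}\le b^{1/2}$ and conversely.

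Everything therefore reduces to the converse half of (iii): if $aCb$ with $a,b\in A^{+}$ and $a^{2}\le b^{2}$, then $a\le b$. This is the step I expect to be the main obstacle, since one cannot simply cancel a factor as in the forward direction. My plan is to put $d:=b-a$ and show that its negative part $d^{-}$ vanishes, using the standard synaptic-algebra facts that $d=d^{+}-d^{-}$ with $d^{+},d^{-}\in A^{+}$, $d^{+}d^{-}=0$, and $d^{\pm}\in CC(d)$; note that $a$, $b$ and $a+b$ all lie in $C(d)$ and hence commute with $d^{-}$. The crux is to evaluate $d^{-}d(a+b)$ in two ways: on one hand $d(a+b)=b^{2}-a^{2}\ge0$ commutes with $d^{-}\ge0$, so the product is $\ge0$; on the other hand $d^{-}d=d^{-}(d^{+}-d^{-})=-(d^{-})^{2}$, so the product equals $-(d^{-})^{2}(a+b)$, which is $\le0$ because $(d^{-})^{2}$ and $a+b$ are commuting positives. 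Hence $(d^{-})^{2}(a+b)=0$, and splitting off the positive summands gives $(d^{-})^{2}a=(d^{-})^{2}b=0$. Rewriting $(d^{-})^{2}a$ as $d^{-}ad^{-}$ (legitimate since $d^{-}$ commutes with $a$) and invoking the property ``$0\le c$ and $xcx=0$ imply $xc=0$'' yields $d^{-}a=0$, and likewise $d^{-}b=0$; therefore $d^{-}d=d^{-}(b-a)=0$, i.e.\ $(d^{-})^{2}=0$, so $d^{-}=0$ and $a\le b$. The points needing care are keeping track of which elements are known to commute (so that all these products are unambiguous and the ``product of commuting positives is positive'' tool applies) and having the identities $d^{+}d^{-}=0$ and $d^{\pm}\in CC(d)$ available.
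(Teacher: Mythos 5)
Your proposal is correct. For parts (i), (ii) and (iv) you follow exactly the paper's route: $a^{1/2}\in CC(a)$ gives $C(a)\subseteq C(a^{1/2})$, the reverse inclusion is the trivial ``commuting with $x$ implies commuting with $x^2$'' observation, (ii) is immediate bookkeeping, and (iv) is (iii) applied to $a^{1/2},b^{1/2}$ after (ii). The one genuine divergence is part (iii): the paper simply cites Corollary 3.4 of the synaptic-algebra paper \cite{FSyn} and proves nothing, whereas you supply a self-contained argument for the nontrivial implication $a^{2}\leq b^{2}\Rightarrow a\leq b$. Your argument is sound: with $d:=b-a$ you correctly have $a,b\in C(d)$, hence (since $d^{-}\in CC(d)$) $d^{-}$ commutes with $a$, $b$, and $d$, so every product you form is a product of commuting elements and lies in $A$; the two evaluations of $d^{-}d(a+b)$ force $(d^{-})^{2}(a+b)=0$, the splitting into nonnegative summands is legitimate in a partially ordered linear space, and the passage from $d^{-}ad^{-}=0$ to $d^{-}a=0$ uses exactly the quadratic-map fact ``$0\leq b$ and $aba=0$ imply $ab=ba=0$'' recorded in Section 3, after which $(d^{-})^{2}=-d^{-}d=0$ and $d^{-}=0$ follow from $x^{2}=0\Rightarrow x=0$. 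What your version buys is independence from the external reference, at the cost of a page of commutant bookkeeping; every ingredient you invoke is indeed available in the paper's summary of synaptic-algebra facts, so the substitution is legitimate.
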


\begin{proof}
(i) Since $a\sp{1/2}\in CC(a)$, it follows that $C(a)\subseteq C(a\sp{1/2})$.
Obviously, $C(a\sp{1/2})\subseteq C(a)$ and we have $C(a)=C(a\sp{1/2})$.
Replacing $a$ by $a\sp{2}$, we obtain $C(a\sp{2})=C(a)$.

(ii) If $aCb$, then by (i), $b\in C(a)=C(a\sp{1/2})$, whence $a\sp{1/2}
\in C(b)=C(b\sp{1/2})$.  Conversely, if $a\sp{1/2}Cb\sp{1/2}$, then
$(a\sp{1/2})\sp{2}C(b\sp{1/2})\sp{2}$, i.e., $aCb$.

(iii) Part (iii) follows from \cite[Corollary 3.4]{FSyn}.

(iv) Assume that $aCb$. By (ii), $a\sp{1/2}Cb\sp{1/2}$, and replacing
$a$ by $a\sp{1/2}$ and $b$ by $b\sp{1/2}$ in (iii) we obtain (iv).
\end{proof}

For the special case in which $A={\mathcal B}\sp{sa}({\mathfrak H})$
it is well-known that $A$ has the MSR property in the following theorem.
In fact, MSR holds for the self-adjoint part of any C$\sp{\ast}$-algebra
\cite[Proposition 4.2.8]{KadRing}. Also, see \cite[Proposition 10]{TopVL}.
See \cite{JPMSR} for a proof of the theorem.

\begin{theorem} \label{th:MSR}
The synaptic algebra $A$ has the monotone square root {\rm(MSR)}
property, i.e., for all $a,b\in A$, $0\leq a\leq b\Rightarrow
a\sp{1/2}\leq b\sp{1/2}$.
\end{theorem}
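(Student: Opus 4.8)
The plan is to transplant the classical proof that $t\mapsto\sqrt{t}$ is operator monotone on the self-adjoint part of a C$\sp{\ast}$-algebra: represent $a\sp{1/2}$ as an order-norm integral of the resolvent-type elements $a(s+a)\sp{-1}$, $s>0$, and check that each of these is monotone in $a$. The argument needs two inputs beyond what has been recalled above. First, in a synaptic algebra an element bounded below by a positive multiple of $1$ is invertible with inverse lying in $A$, and, more generally, a single element of $A$ carries a continuous functional calculus compatible with the partial order (this is the spectral theory of synaptic algebras, cf.\ \cite{FSyn}); in particular, for $a\in A\sp{+}$ and $s>0$ the element $(s+a)\sp{-1}$ lies in $CC(a)$, hence so does $a(s+a)\sp{-1}=1-s(s+a)\sp{-1}$. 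Second, the elementary scalar identity $\sqrt{t}=\frac1\pi\int\sb0\sp\infty s\sp{-1/2}\,t(s+t)\sp{-1}\,ds$, valid for every real $t\ge 0$.

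First I would show that inversion reverses the order on invertible positive elements: if $0\le u\le v$ in $A$ with $u$ and $v$ invertible, then $v\sp{-1}\le u\sp{-1}$. Applying the order-preserving quadratic map $x\mapsto v\sp{-1/2}xv\sp{-1/2}$ to $u\le v$ yields $c:=v\sp{-1/2}uv\sp{-1/2}\le 1$, where $c$ is an invertible element of $A\sp{+}$ with $c\sp{-1}=v\sp{1/2}u\sp{-1}v\sp{1/2}\in A\sp{+}$; applying the order-preserving quadratic map $x\mapsto c\sp{-1/2}xc\sp{-1/2}$ to $c\le 1$ yields $1\le c\sp{-1}=v\sp{1/2}u\sp{-1}v\sp{1/2}$; and applying $x\mapsto v\sp{-1/2}xv\sp{-1/2}$ once more yields $v\sp{-1}\le u\sp{-1}$. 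Now, given $0\le a\le b$ in $A$ and $s>0$, the elements $s+a\le s+b$ are both bounded below by $s>0$, hence invertible, so $(s+b)\sp{-1}\le(s+a)\sp{-1}$ by what was just shown; multiplying by $s$ and subtracting from $1$ gives
\[
a(s+a)\sp{-1}=1-s(s+a)\sp{-1}\le 1-s(s+b)\sp{-1}=b(s+b)\sp{-1}.
\]

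Next I would fix $a\in A\sp{+}$ and integrate over $s\in(0,\infty)$. Under the functional calculus for $a$ inside $CC(a)$, the element $s\sp{-1/2}a(s+a)\sp{-1}$ corresponds to the continuous function $t\mapsto s\sp{-1/2}t(s+t)\sp{-1}$ on the compact set $\opspec(a)\subseteq[0,\|a\|]$, whose order-unit norm is therefore at most $\min\{s\sp{-1/2},\ \|a\|\,s\sp{-3/2}\}$; since this bound is independent of $a$ and integrable over $(0,\infty)$, and since the displayed scalar identity holds uniformly for $t$ in the compact set $\opspec(a)$, the partial integrals $\frac1\pi\int\sb\epsilon\sp{N}s\sp{-1/2}a(s+a)\sp{-1}\,ds$ converge in the order-unit norm, as $\epsilon\downarrow 0$ and $N\uparrow\infty$, to $a\sp{1/2}$, and likewise for $b$. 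Because $a(s+a)\sp{-1}\le b(s+b)\sp{-1}$ for every $s>0$ and the positive cone of $A$ is norm-closed ($A$ being an Archimedean order-unit space), passing to the limit gives $a\sp{1/2}\le b\sp{1/2}$, which is the MSR property.

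Everything except the functional-calculus input is routine: the inversion step uses only order-preservation of the quadratic maps together with the existence of square roots and of inverses of bounded-below elements, and the scalar estimates are elementary. The main obstacle is exactly the appeal to a continuous functional calculus for a single positive element of $A$, which is what makes the order-norm integral meaningful and identifies its value as $a\sp{1/2}$; this is the point at which a strictly algebraic argument, in the spirit of the rest of this paper, appears to break down, and presumably that is why Theorem~\ref{th:MSR} is quoted from \cite{JPMSR} rather than reproved here.
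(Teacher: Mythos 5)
First, note that there is nothing in the paper to compare your argument against: the text explicitly defers the proof of Theorem~\ref{th:MSR} to \cite{JPMSR} and proves nothing itself, so your proposal can only be judged on its own terms. On those terms, the first half is correct and genuinely synaptic-algebraic: $s+a$ is invertible for $s>0$ because $s\leq|s+a|$, the inverse lies in $CC(s+a)=CC(a)$, the anti-monotonicity of inversion follows exactly as you say from the order-preservation of the quadratic mappings $x\mapsto v^{-1/2}xv^{-1/2}$ and $x\mapsto c^{-1/2}xc^{-1/2}$ (all the intermediate elements do lie in $A$, being quadratic images), and hence $a\mapsto a(s+a)^{-1}=1-s(s+a)^{-1}$ is monotone. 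The genuine gap is the one you yourself flag, and it is real: a synaptic algebra is an Archimedean order-unit space but is \emph{not} assumed norm-complete, so the norm-Cauchy net of truncated integrals $\frac1\pi\int_\epsilon^N s^{-1/2}a(s+a)^{-1}\,ds$ has no guaranteed limit in $A$; and the functional representation of the commutative sub-synaptic algebra $CC(a)$ given by \cite[Theorem 4.1]{PSyn} is an embedding into some $C(X,\reals)$ whose image need not be uniformly closed, so one cannot manufacture the integral inside $A$ that way either. As written, the second half of your proof does not go through in this generality.

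The gap is fixable, however, without any completeness assumption, by discretizing before passing to $A$. The uniform convergence you already established (the integrand is dominated by $\min\{s^{-1/2},Ms^{-3/2}\}$ on $[0,M]$) shows that for each $\epsilon>0$ there is a \emph{finite} positive combination $\varphi(t)=\sum_k c_kt(s_k+t)^{-1}$ with $|\sqrt t-\varphi(t)|\leq\epsilon$ for all $t\in[0,M]$, $M:=\max\{\|a\|,\|b\|\}$. Now $\varphi(a):=\sum_k c_k a(s_k+a)^{-1}$ is an honest element of $CC(a)\subseteq A$, and it is monotone in $a$ by your first half. The representation of \cite[Theorem 4.1]{PSyn} is unital, multiplicative, and an order-embedding, so it sends $a^{1/2}$ to $\sqrt{\hat a}$ and $a(s_k+a)^{-1}$ to $\hat a(s_k+\hat a)^{-1}$, and the scalar estimate pulls back to $-\epsilon\leq a^{1/2}-\varphi(a)\leq\epsilon$ in $A$ (and likewise for $b$). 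Chaining, $a^{1/2}\leq\varphi(a)+\epsilon\leq\varphi(b)+\epsilon\leq b^{1/2}+2\epsilon$ for every $\epsilon>0$, and the Archimedean property of $A$ yields $a^{1/2}\leq b^{1/2}$. With that replacement your argument becomes a complete proof; whether it coincides with the route taken in \cite{JPMSR} is a separate question, but the appeal to a norm-convergent integral, which is the only step that actually fails for a general synaptic algebra, is avoidable.
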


For $a\in A$, the \emph{absolute value} of $a$, defined and denoted by
$|a|:=(a\sp{2})\sp{1/2}$, satisfies $|a|\in CC(a)$. Obviously $a\sp{2}=|a|
\sp{2}$ and $|-a|=|a|$. Moreover, if $a,b\in A$ and $aCb$, then $|a|C|b|$
and $|ab|=|a||b|$. The \emph{positive part} of $a\in A$ is denoted and
defined by $a\sp{+}:=\frac12(|a|+a)$, and the positive part of $-a$
(sometimes confusingly called the ``negative part" of $a$) is denoted by
$a\sp{-}:=(-a)\sp{+}=\frac12(|a|-a)$. Then $a\sp{+}, a\sp{-}\in A\sp{+}$;
$a=a\sp{+}-a\sp{-}$; $a\sp{+}a\sp{-}=0$; and $|a|=a\sp{+}+a\sp{-}\geq a\sp{+}
-a\sp{-}=a$. Since $|-a|=|a|$, it follows that $-|a|\leq a\leq |a|$.

\begin{lemma} \label{lm:AbValProps}
Let $a,b\in A$ and suppose that $aCb$. Then{\rm: (i)} $|a+b|\leq |a|+|b|$.
{\rm(ii)} $-b\leq a\leq b\Leftrightarrow |a|\leq b$.
\end{lemma}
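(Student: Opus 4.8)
The plan is to prove (ii) first and then to read off (i) from it. A useful preliminary remark, under the standing hypothesis $aCb$, is that the four elements $a,b,|a|,|b|$ commute pairwise: since $b\in C(a)$ and $|a|\in CC(a)=C(C(a))$ we get $|a|Cb$ (and symmetrically $|b|Ca$), while $aCb$ forces $|a|C|b|$ by the facts recalled above; together with $|a|\in CC(a)$ (so $|a|Ca$) and $|b|Cb$ this shows that both $|a|$ and $|b|$ commute with $a+b$, hence $(a+b)\,C\,(|a|+|b|)$. This last fact is exactly what I will need in order to apply (ii) inside the proof of (i).

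For (ii), the implication $|a|\leq b\Rightarrow -b\leq a\leq b$ requires no commutativity at all: it is immediate from the already-recorded inequalities $-|a|\leq a\leq |a|$ together with $-b\leq -|a|$ and $|a|\leq b$. For the converse I would assume $-b\leq a\leq b$; adding $b$ throughout gives $0\leq 2b$, hence $0\leq b$, and also $0\leq b-a$ and $0\leq b+a$. Since $ab=ba$, the positive elements $b-a$ and $b+a$ commute, so their product is again positive; but $(b-a)(b+a)=b\sp{2}-a\sp{2}$, whence $a\sp{2}\leq b\sp{2}$. Now $a\sp{2}=|a|\sp{2}$, and since $|a|Cb$ with $|a|,b\in A\sp{+}$, Lemma \ref{lm:sqrtprops}(iii) upgrades $|a|\sp{2}\leq b\sp{2}$ to $|a|\leq b$, which completes (ii).

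For (i), set $c:=|a|+|b|\in A\sp{+}$. From $a\leq |a|$ and $b\leq |b|$ I get $a+b\leq c$, and from $-a\leq |a|$ and $-b\leq |b|$ I get $-(a+b)\leq c$; thus $-c\leq a+b\leq c$. By the preliminary remark $(a+b)\,C\,c$, so part (ii), applied with $a+b$ in place of $a$ and $c$ in place of $b$, yields $|a+b|\leq c=|a|+|b|$, as desired.

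I expect the only substantive step to be the converse half of (ii): recognizing that $-b\leq a\leq b$ (with $aCb$) should be routed through the inequality $a\sp{2}\leq b\sp{2}$ — obtained by multiplying the commuting positive elements $b-a$ and $b+a$ — and then converted back to $|a|\leq b$ via the monotonicity of squaring on commuting positive elements (Lemma \ref{lm:sqrtprops}(iii)), which in turn requires having the commutation relation $|a|Cb$ available. Everything else is routine manipulation in the order-unit space $A$.
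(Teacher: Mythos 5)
Your proof is correct, and part (ii) is essentially the paper's own argument: the easy direction from $-|a|\leq a\leq|a|$, and the converse via $0\leq(b-a)(b+a)=b\sp{2}-a\sp{2}$ followed by the monotonicity of squaring on commuting positive elements (you cite Lemma \ref{lm:sqrtprops}(iii), the paper cites (iv); they are interchangeable here). Where you genuinely diverge is part (i). The paper proves (i) directly: it checks that $|a+b|$ commutes with $|a|+|b|$, expands $|a+b|\sp{2}=(a+b)\sp{2}=a\sp{2}+2ab+b\sp{2}$, bounds the cross term by $ab\leq|ab|=|a||b|$ to get $(a+b)\sp{2}\leq(|a|+|b|)\sp{2}$, and then extracts square roots. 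You instead deduce (i) as a formal corollary of (ii): from $\pm a\leq|a|$ and $\pm b\leq|b|$ you get $-c\leq a+b\leq c$ with $c:=|a|+|b|$, verify $(a+b)Cc$ via the bicommutant facts $|a|\in CC(a)$, $b\in C(a)$, and apply (ii) to the pair $(a+b,c)$. Both routes rest on the same underlying commutation bookkeeping, but yours avoids the identity $|ab|=|a||b|$ and the explicit square expansion in (i), concentrating all the analytic content in (ii); the paper's version keeps (i) and (ii) independent, which costs one extra squaring argument but makes (i) self-contained. Either organization is sound.
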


\begin{proof} Assume that $aCb$.
(i) We have $a,b\in C(a+b)$, so $a,b\in C(|a+b|)$, and therefore
$|a|,|b|\in C(|a+b|)$, whence $|a+b|C(|a|+|b|)$. Also, $ab\leq |ab|=|a||b|$,
and we have
\[
|a+b|\sp{2}=(a+b)\sp{2}=a\sp{2}+2ab+b\sp{2}\leq a\sp{2}+ 2|a||b|+b\sp{2}
 =(|a|+|b|)\sp{2},
\]
Thus, by Lemma \ref{lm:sqrtprops} (iv), $|a+b|\leq |a|+|b|$.

(ii) Since $aCb$, we have $(b-a)C(b+a)$ and $|a|Cb$. Suppose that $-b
\leq a\leq b$. Then $0\leq b-a, b+a$, whence $0\leq(b-a)+(b+a)=2b$, so
$0\leq b$. Also $0\leq(b-a)(b+a)=b\sp{2}-a\sp{2}$, so $|a|\sp{2}=a\sp{2}
\leq b\sp{2}$, and by Lemma \ref{lm:sqrtprops} (iv), $|a|\leq b$.
Conversely, suppose $|a|\leq b$. Then $a\leq |a|\leq b$ and $-a\leq
|-a|=|a|\leq b$, whence $-b\leq a$ and we have $-b\leq a\leq b$.
\end{proof}

If $a\in A$, there exists a unique projection, denoted by $a\dg\in P$ and
called the \emph{carrier} of $a$, such that, for all $b\in A$, $ab=
0\Leftrightarrow a\dg b=0$.  It turns out that $a\dg\in CC(a)$, $aa\dg=
a\dg a=a$, $a\dg$ is the smallest projection $p\in P$ such that $a=ap$ (or,
equivalently, such that $a=pa$), and for all $b\in A$,
\[
ab=0\Leftrightarrow a\dg b=0\Leftrightarrow ab\dg=0\Leftrightarrow a\dg b
 \dg=0\Leftrightarrow ba=0.
\]
Moreover, $(a\sp{2})\dg=a\dg$, $|a|\dg=a\dg$, and we have the following
lemma.

\begin{lemma} \label{lm:carrierinequalities}
\
\begin{enumerate}
\item If $a,b\in A\sp{+}$, then $(b-a)\dg\leq(b+a)\dg$.
\item If $a,b\in A$ and $-b\leq a\leq b$, then $a\dg\leq b\dg$.
\item If $a,b\in A$ and $0\leq a\leq b$, then $a\dg\leq b\dg$.
\end{enumerate}
\end{lemma}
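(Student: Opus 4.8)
The plan is to prove the three parts in the order (1), (2), (3), since the later parts will reduce to the first.

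\medskip

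\noindent\textbf{Part (1).} Let $a,b\in A\sp{+}$. The key idea is to work inside a commutative order-unit space so that square-root and carrier computations behave well. First I would pass to a C-block $B$ containing both $a$ and $b$; then $B=C(B)$ is commutative, closed under squares and square roots, and all carriers of its elements lie in $CC(B)\subseteq B$. Within $B$ the crucial algebraic fact is that for commuting positive elements $x,y$ one has $(x+y)\dg = x\dg\vee y\dg$ in the projection OML: indeed $x\dg\vee y\dg$ is a projection $p$ with $px=x$ and $py=y$, hence $p(x+y)=x+y$, so $(x+y)\dg\le p$; and conversely $(x+y)\dg$ annihilates nothing that both $x$ and $y$ annihilate is false in general, but here positivity rescues us: if $q(x+y)=0$ with $q$ a projection commuting with $x,y$, then $0=q(x+y)q=qxq+qyq$ is a sum of two positives, forcing $qxq=qyq=0$, hence (by the quadratic-map property recalled in Section~3, $0\le t$ and $qtq=0\Rightarrow qt=tq=0$) $qx=qy=0$, so $q(x\dg\vee y\dg)=0$ too. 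Thus $(x+y)\dg=x\dg\vee y\dg$. Now set $x=b-a$ need not be positive, so instead I apply this with the decomposition $b = \frac{(b+a)+(b-a)}{2}$... more cleanly: write $b+a$ and $b-a$; we do \emph{not} know $b-a\ge0$. The honest route: $2b=(b+a)+(b-a)$ and $2a=(b+a)-(b-a)$, so $(b+a)$ and $(b-a)$ are two commuting self-adjoint elements whose sum and difference are the positives $2b$ and $2a$. Apply Lemma~\ref{lm:AbValProps}(ii) with $|b-a|\le b+a$ (since $-(b+a)\le b-a\le b+a$ as $a,b\ge0$); since $|b-a|\dg=(b-a)\dg$ and, by the commuting-positive case above applied to $|b-a|$ and... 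Actually the clean finish is: $-(b+a)\le b-a\le b+a$ gives $(b-a)\dg\le (b+a)\dg$ directly by Part (2). So I would prove Part (2) first and derive Part (1) as a corollary, reversing the stated order only in the argument.

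\medskip

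\noindent\textbf{Part (2).} Let $a,b\in A$ with $-b\le a\le b$. By Lemma~\ref{lm:AbValProps}(ii) (which requires $aCb$—wait, it does!), we cannot invoke it unless $a$ and $b$ commute. So the genuine content of Part (2) must avoid that lemma. Instead: from $-b\le a\le b$ we get $0\le 2b$, so $b\ge0$; and for any projection $q$ with $qb=0$ we have $0\le q(b\mp a)q = -\,(\pm q a q)$ hence $qaq=0$ (both $q(b-a)q\ge0$ and $q(b+a)q\ge0$ while their sum is $2qbq=0$, so each is $0$, and subtracting gives $qaq=0$); but wait, $q(b-a)q - q(b+a)q = -2qaq$, and both terms on the left are $0$, so $qaq=0$, hence $qa=aq=0$ by the quadratic-map property, so $qa\dg=0$. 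Taking $q=b\dg{}\sp{\perp}=1-b\dg$ (which does satisfy $qb=0$) we get $(1-b\dg)a\dg=0$, i.e. $a\dg = b\dg a\dg$, so $a\dg\le b\dg$ in the OML $P$ (using $a\dg\le p\Leftrightarrow a\dg = pa\dg$ for projections). That is the whole proof of Part (2), and it nicely needs no commutativity.

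\medskip

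\noindent\textbf{Part (3) and assembling.} Part (3) is immediate from Part (2): if $0\le a\le b$ then also $-b\le 0\le a\le b$, so Part (2) applies. And Part (1) follows from Part (2) as indicated: $a,b\ge0$ imply $-(b+a)\le b-a\le b+a$, so $(b-a)\dg\le(b+a)\dg$. So the real work is the single lemma that for a projection $q$, $qtq=0$ with $t\ge0$ forces $qt=tq=0$—but this is exactly the quadratic-mapping fact recalled in Section~\ref{sc:BasicProps} ($0\le b$ and $aba=0\Rightarrow ab=ba=0$, with $a=q$, $b=t$). The main obstacle I anticipate is purely bookkeeping: making sure the chain ``$q(b-a)q\ge0$ and $q(b+a)q\ge0$ with sum zero'' is justified by the order-preservation of the quadratic map $x\mapsto qxq$ applied to $0\le b-a$ and $0\le b+a$—these hold since $-b\le a$ gives $0\le b+a$ and $a\le b$ gives $0\le b-a$—and then concluding $qaq = \tfrac12\big(q(b+a)q - q(b-a)q\big)=0$. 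Everything else is routine projection-OML manipulation already developed in the excerpt.
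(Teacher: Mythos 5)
Your proposal is correct in substance but runs in the opposite direction from the paper, and the comparison is worth recording. The paper proves part (i) first, citing the external fact that $(b+a)\dg=b\dg\vee a\dg$ for $a,b\in A\sp{+}$ (from the ideals paper), deducing $(b-a)(b+a)\dg=b-a$ and hence $(b-a)\dg\leq(b+a)\dg$; it then obtains (ii) from (i) by the substitution $c=\frac12(b-a)$, $d=\frac12(b+a)$, and (iii) as a special case of (ii). You instead prove (ii) directly from first principles --- take $q:=1-b\dg$, use order-preservation of the quadratic map $x\mapsto qxq$ on the positive elements $b\pm a$ to get $q(b+a)q+q(b-a)q=2qbq=0$, force each summand to vanish, and pass through the carrier's defining property to $a\dg=a\dg b\dg$ --- and then recover (i) from (ii) via $-(b+a)\leq b-a\leq b+a$ and (iii) trivially. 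Your route has the merit of being self-contained (no appeal to the cited $\vee$-formula for carriers of sums) and of exposing that (i) and (ii) are equivalent under the linear change of variables; the paper's route is shorter on the page because the heavy lifting is outsourced to the citation. The exploratory first half of your Part (1), which tries to reprove $(x+y)\dg=x\dg\vee y\dg$ inside a C-block, is abandoned and not needed; it is in fact the paper's key cited ingredient.

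One step needs repair as written: after obtaining $qaq=0$ you write ``hence $qa=aq=0$ by the quadratic-map property,'' but that property ($qtq=0$ and $0\leq t$ imply $qt=tq=0$) requires the middle element to be positive, and $a$ need not be. The fix uses only what you already established: apply the property to $t=b+a\geq 0$ and $t=b-a\geq 0$ separately, getting $q(b+a)=(b+a)q=0$ and $q(b-a)=(b-a)q=0$, then subtract to conclude $qa=aq=0$ by linearity, and proceed to $qa\dg=0$ as you do. With that substitution the argument is complete.
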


\begin{proof}
(i) Assume that $a,b\in A\sp{+}$. Then it follows from \cite[Lemma 3.1
(b)]{IdealSyn} that $b\dg, a\dg\leq b\dg\vee a\dg=(b+a)\dg$, whence
$(b-a)(b+a)\dg=b(b+a)\dg-a(b+a)\dg=b-a$, and therefore $(b-a)\dg
\leq(b+a)\dg$.

(ii) and (iii). Let $c:=\frac12(b-a)$ and $d:=\frac12(b+a)$. Then $a
=d-c$, $b=d+c$, and $c,d\in A\sp{+}\Leftrightarrow -b\leq a\leq b$,
whence (ii) follows from (i). Moreover, if $0\leq a\leq b$, then
$-b\leq a\leq b$, so (iii) follows from (ii).
\end{proof}

Using carriers, we can generalize \cite[Lemma 3]{TopVL} to the synaptic
algebra $A$ as follows.

\begin{lemma} \label{lm:TopL3}
Let $a,b,c,d\in A$ with $cd=0$. Then{\rm: (i)} If $0\leq a\leq c$ and
$0\leq b\leq d$, then $ab=ba=0$. {\rm (ii)} If $0\leq a\leq c,d$, then
$a=0$.
\end{lemma}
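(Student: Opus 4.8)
\textbf{Proof proposal for Lemma \ref{lm:TopL3}.}

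The plan is to reduce everything to the carrier inequalities of Lemma \ref{lm:carrierinequalities} together with the defining property of the carrier, namely that for $x,y\in A$ one has $xy=0\Leftrightarrow x\dg y\dg=0$. The point of using carriers is that $c$ and $d$ themselves need not commute with $a$ or $b$, so we cannot work with $c,d$ directly; but their carriers $c\dg,d\dg$ are projections, and $cd=0$ forces $c\dg d\dg=0$, i.e.\ $c\dg$ and $d\dg$ are orthogonal projections.

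For part (i), first I would note that since $0\leq a\leq c$, Lemma \ref{lm:carrierinequalities}(iii) gives $a\dg\leq c\dg$, and similarly $0\leq b\leq d$ gives $b\dg\leq d\dg$. From $cd=0$ and the carrier equivalence we get $c\dg d\dg=0$, so $c\dg\perp d\dg$ as projections, hence $c\dg d\dg=d\dg c\dg=0$. Since $a\dg\leq c\dg$ and $b\dg\leq d\dg$ we then get $a\dg b\dg\leq$ ... more carefully: $a\dg b\dg = a\dg c\dg d\dg b\dg$? That is not quite immediate because $a\dg\leq c\dg$ only tells us $a\dg c\dg = c\dg a\dg = a\dg$ (using the recorded fact that for a projection $p$ and an effect $e$, $e\leq p\Leftrightarrow e=ep=pe$, applied with both $a\dg,c\dg$ projections). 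So $a\dg b\dg = (a\dg c\dg)(d\dg b\dg) = a\dg(c\dg d\dg)b\dg = a\dg\cdot 0\cdot b\dg = 0$. Then the carrier equivalence $a\dg b\dg=0\Leftrightarrow ab=0\Leftrightarrow ba=0$ (one of the chains of equivalences recorded just before Lemma \ref{lm:carrierinequalities}) finishes part (i): $ab=ba=0$.

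For part (ii), suppose $0\leq a\leq c$ and $0\leq a\leq d$. I would like to apply part (i) with $b:=a$, but that requires $0\leq a\leq c$ and $0\leq a\leq d$ simultaneously with the two bounds playing symmetric roles, which is exactly our hypothesis; so part (i) yields $a\cdot a = a\sp{2}=0$, and then the cancellation property of the quadratic map recorded in Section \ref{sc:BasicProps} (namely $a\sp{2}=0\Rightarrow a=0$) gives $a=0$. The only thing to check is that part (i) genuinely applies verbatim with $b=a$; it does, since its hypotheses are $0\leq a\leq c$, $0\leq b\leq d$, $cd=0$, all of which hold. The main (minor) obstacle is bookkeeping with the carrier: being careful that $a\leq c$ for effects-or-positives really does give the projection relation $a\dg\leq c\dg$ (Lemma \ref{lm:carrierinequalities}(iii)) rather than something weaker, and that orthogonality of the projections $c\dg,d\dg$ propagates downward to $a\dg,b\dg$; no deep idea is needed beyond that.
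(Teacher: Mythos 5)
Your proposal is correct and follows essentially the same route as the paper: pass to carriers via Lemma \ref{lm:carrierinequalities}(iii) and the equivalence $cd=0\Leftrightarrow c\dg d\dg=0$, deduce $a\dg b\dg=0$ from the orthogonality of $c\dg$ and $d\dg$, and obtain (ii) from (i) with $b=a$ together with $a\sp{2}=0\Rightarrow a=0$. The only difference is that you spell out the computation $a\dg b\dg=(a\dg c\dg)(d\dg b\dg)=a\dg(c\dg d\dg)b\dg=0$, which the paper dismisses as the ``easy to prove'' projection case.
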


\begin{proof}
(i) If $a,b,c,$ and $d$ are projections, the statement is easy to prove.
Assume the hypothesis and let $cd=0$. Then $c\dg d\dg=0$, and by Lemma
\ref{lm:carrierinequalities} (iii), $0\leq a\dg\leq c\dg$, and $0\leq b\dg
\leq d\dg$. Therefore $a\dg b\dg=0$, and consequently $ab=ba=0$.

(ii) If $0\leq a\leq c,d$, then $a\sp{2}=0$ by (i), whence $a=0$.
\end{proof}

If $a\in A$, then the \emph{spectral resolution} of $a$ is the
one-parameter family of projections $(p\sb{a,\lambda})\sb
{\lambda\in\reals}$ defined by $p\sb{a,\lambda}:=1-((a-\lambda)
\sp{+})\dg$ for $\lambda\in\reals$. Note that if $V$ is a linear
subspace of $A$, $1\in V$, and $a\in V\Rightarrow a\sp{+}, a\dg
\in V$, then $a\in V\Rightarrow(p\sb{a,\lambda})\sb{\lambda\in
\reals}\subseteq V\cap P$.

\begin{theorem} \label{th:speccom}
Let $a,b\in A$. Then{\rm:}
\begin{enumerate}
\item $aCb$ iff $p\sb{a,\lambda}Cb$ for all $\lambda\in\reals$.
\item $aCb$ iff $p\sb{a,\lambda}Cp\sb{b,\mu}$ for all $\lambda,\mu
\in\reals$.
\end{enumerate}
\end{theorem}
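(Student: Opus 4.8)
The plan is to reduce the theorem to its part~(1), and to reduce part~(1) in turn to its nontrivial implication: if $p\sb{a,\lambda}Cb$ for all $\lambda\in\reals$, then $aCb$. Granting part~(1), part~(2) is purely formal. If $aCb$, then $bCa$, so part~(1) applied with the roles of $a$ and $b$ exchanged gives $p\sb{b,\mu}Ca$, i.e. $aCp\sb{b,\mu}$, for every $\mu$; fixing $\mu$ and applying part~(1) to the pair $(a,p\sb{b,\mu})$ gives $p\sb{a,\lambda}Cp\sb{b,\mu}$ for all $\lambda$, hence for all $\lambda,\mu$. Conversely, if $p\sb{a,\lambda}Cp\sb{b,\mu}$ for all $\lambda,\mu$, then fixing $\lambda$ and applying the nontrivial half of part~(1) to the pair $(b,p\sb{a,\lambda})$ yields $p\sb{a,\lambda}Cb$ for every $\lambda$, and one further application of that half to the pair $(a,b)$ yields $aCb$.

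For the ``easy'' implication in part~(1) --- $aCb\Rightarrow p\sb{a,\lambda}Cb$ for all $\lambda$ --- I would prove that $p\sb{a,\lambda}\in CC(a)$ for every $\lambda\in\reals$. Now $CC(a)$ is a linear subspace of $A$ containing $1$; it contains $a$ by inclusion~(i) for bicommutants, and it contains $|a|$ since $|a|\in CC(a)$, hence it contains $a\sp{+}=\frac12(|a|+a)$. Since, for $y\in A$, $y(a-\lambda)=(a-\lambda)y$ iff $ya=ay$, we have $C(a-\lambda)=C(a)$ and therefore $CC(a-\lambda)=CC(a)$; thus $(a-\lambda)\sp{+}\in CC(a)$. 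Next, from properties~(i)--(iii) of commutants one gets the elementary transitivity fact $x\in CC(a)\Rightarrow CC(x)\subseteq CC(a)$ (indeed $x\in CC(a)$ forces $C(a)\subseteq C(x)$, and then $CC(x)\subseteq CC(a)$ by the order-reversing property of $C$). Applying this with $x=(a-\lambda)\sp{+}$, together with $((a-\lambda)\sp{+})\dg\in CC((a-\lambda)\sp{+})$, gives $((a-\lambda)\sp{+})\dg\in CC(a)$, and hence $p\sb{a,\lambda}=1-((a-\lambda)\sp{+})\dg\in CC(a)$. Consequently, if $aCb$, i.e. $b\in C(a)$, then $b$ commutes with every member of $CC(a)=C(C(a))$, in particular with each $p\sb{a,\lambda}$.

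The converse implication in part~(1) carries the real content, and I expect it to be the main obstacle: it says $C(\{p\sb{a,\lambda}:\lambda\in\reals\})\subseteq C(a)$, equivalently $a\in CC(\{p\sb{a,\lambda}:\lambda\in\reals\})$ --- that is, $a$ must lie in the bicommutant of its own spectral resolution. For this I would appeal to the spectral theory of synaptic algebras (see \cite{FSyn} and the references cited there), which recovers $a$ from the spectral family $(p\sb{a,\lambda})\sb{\lambda\in\reals}$ and, in particular, places $a$ in the bicommutant of the set $\{p\sb{a,\lambda}:\lambda\in\reals\}$. Once that is available the conclusion is immediate: if $b$ commutes with every $p\sb{a,\lambda}$, then $b\in C(\{p\sb{a,\lambda}:\lambda\in\reals\})$, so $a$ commutes with $b$. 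A more hands-on alternative would approximate $a$ in the order-unit norm by real-linear combinations of the $p\sb{a,\lambda}$ and pass to the limit, but that forces one to track products inside the enveloping algebra $R$, so the bicommutant formulation is the cleaner one to invoke.
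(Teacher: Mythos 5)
Your proposal is correct and follows essentially the same route as the paper, which disposes of part (i) by citing the spectral commutation theorem of \cite{FSyn} (Theorem 8.10 there) and then derives (ii) from (i); your hard direction rests on exactly that external result, while your detailed verification that $p\sb{a,\lambda}\in CC(a)$ and your formal reduction of (ii) to (i) correctly fill in the steps the paper leaves implicit.
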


\begin{proof} Part (i) follows from \cite[Theorem 8.10]{FSyn} and
(ii) follows from (i).
\end{proof}

An element $a\in A$ is said to be \emph{invertible} iff there exists
a (necessarily unique) element $a\sp{-1}\in A$ with $aa\sp{-1}=a\sp{-1}a
=1$. It can be shown that $a$ is invertible iff there exists $0<\epsilon
\in\reals$ such that $\epsilon\leq|a|$ and that, if $a$ is invertible,
then $a\sp{-1}\in CC(a)$.

A linear subspace $H$ of $A$ is called a \emph{sub-synaptic algebra}
of $A$ iff $1\in H$ and for all $h\in H$, we have $h\sp{2}, h\dg\in H$,
$0\leq h\Rightarrow h\sp{1/2}\in H$, and $1\leq h\Rightarrow h\sp{-1}
\in H$ \cite[Definition 2.5]{PSyn}. If $H$ is a sub-synaptic algebra
of $A$, then $H$ forms a synaptic algebra in its own right under the
restrictions to $H$ of the operations and the partial order on $A$
\cite[Theorem 2.6]{PSyn}. For instance, the commutant $C(B)$ of any
subset $B\subseteq A$ is a sub-synaptic algebra of $A$, the bicommutant
$CC(B)$ of any commutative subset $B$ of $A$ is a commutative sub-synaptic
algebra of $A$, and any C-block, is a commutative sub-synaptic algebra
of $A$.  Also, the commutant $C(A)$ of $A$ itself, which is called the
\emph{center} of $A$, is clearly commutative, and $C(A)=CC(C(A))$ is a
commutative sub-synaptic algebra of $A$.

\section{The generalized infimum and supremum} \label{sc:Ginfsup}

For the special case in which $A={\mathcal B}\sp{sa}({\mathfrak H})$, a
well-known result of R. Kadison \cite{Kad} shows that $A$ is an \emph
{antilattice}, i.e., for $a,b\in A$, the infimum $a\wedge\sb{A}b$ of $a$
and $b$ in $A$ exists iff $a$ and $b$ are comparable. Thus, in general,
$A$ will fail to be lattice ordered. However, a number of authors (e.g.,
\cite{Stan, LM, TopVL}) have studied the so-called \emph{generalized
infimum} and \emph{generalized supremum} which always exist, which
are defined for $a,b\in A$ by
\[
a\sqcap b:=\frac12(a+b-|a-b|)\text{\ \ and\ \ }a\sqcup b:=\frac12
 (a+b+|a-b|),
\]
respectively, and which in some cases can serve as substitutes for
nonexistent infima and suprema.

All of the properties of the generalized infimum and supremum proved in
\cite[Section II]{Stan} and in \cite[Section 2]{TopVL} generalize to
our synaptic algebra $A$. We consider most of these properties in this
section.

Proofs of the properties of $\sqcap$ and $\sqcup$ in the following
lemma are straightforward, and thus are omitted.

\begin{lemma} \label{lm:TopL1} {\rm Cf. \cite[Lemma 1]{TopVL}}
Let $a,b,c\in A$. Then{\rm:}
\begin{enumerate}
\item $a\sqcap b=b\sqcap a\leq a,b\leq a\sqcup b=b\sqcup a$.
\item $a\leq b\Leftrightarrow a=a\sqcap b\Leftrightarrow b=a\sqcup b$.
\item $a+b=a\sqcap b+a\sqcup b$ {\rm(}Dedekind's law{\rm)} and $|a-b|=
 a\sqcup b-a\sqcap b$.
\item $(a\sqcap b)+c=(a+c)\sqcap(b+c)$ and $(a\sqcup b)+c=(a+c)\sqcup(b+c)$.
\item $-((-a)\sqcap(-b))=a\sqcup b$ and $-((-a)\sqcup(-b))=a\sqcap b$.
\item $a\sqcup 0=a\sp{+}$, $a\sqcap 0=-a\sp{-}$, $a\sqcup(-a)=|a|$, and
 $a\sqcap(-a)=-|a|$.
\item $a\sp{+}a\sp{-}=a\sp{+}\sqcap a\sp{-}=0$.
\item $a-a\sqcap b=(a-b)\sp{+}$ and $b-a\sqcap b=(a-b)\sp{-}$.
\item $(a-a\sqcap b)(b-a\sqcap b)=(a-a\sqcap b)\sqcap(b-a\sqcap b)=0$.
\end{enumerate}
\end{lemma}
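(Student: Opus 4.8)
The plan is to obtain all nine items by direct substitution of the definitions $a\sqcap b=\tfrac12(a+b-|a-b|)$, $a\sqcup b=\tfrac12(a+b+|a-b|)$, $a^{+}=\tfrac12(|a|+a)$, $a^{-}=\tfrac12(|a|-a)$, leaning on only three elementary facts already recorded in Section~\ref{sc:BasicProps}: $|{-x}|=|x|$; the inequality $-|x|\le x\le|x|$; and, via uniqueness of the positive square root together with $|x|^{2}=x^{2}$, the identities $|c|=c$ for $0\le c\in A$ and $|2a|=2|a|$. No commutativity hypothesis and no spectral machinery are needed anywhere.

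First I would dispatch (i): commutativity of $\sqcap$ and $\sqcup$ is immediate from $|a-b|=|b-a|$, and the four order relations, after clearing the factor $\tfrac12$, all amount to the pair of inequalities $-(a-b)\le|a-b|$ and $a-b\le|a-b|$, i.e.\ to $-|x|\le x\le|x|$ with $x=a-b$. For (ii), if $a\le b$ then $b-a\ge0$, so $|a-b|=b-a$, and substitution collapses $a\sqcap b$ to $a$ and $a\sqcup b$ to $b$; the two converse implications are then read off from (i). Items (iii), (iv), (v) are one-line identities: (iii) is obtained by adding and by subtracting the two defining expressions; (iv) by translating both arguments by $c$ and using $(a+c)-(b+c)=a-b$; and (v) by replacing $a,b$ with $-a,-b$ and invoking $|({-a})-({-b})|=|a-b|$.

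Next, (vi): $a\sqcup0=\tfrac12(a+|a|)=a^{+}$ and $a\sqcap0=\tfrac12(a-|a|)=-a^{-}$ are literally the definitions of $a^{+}$ and $a^{-}$, while $a\sqcup(-a)=\tfrac12|2a|=|a|$ and $a\sqcap(-a)=-\tfrac12|2a|=-|a|$ use $|2a|=2|a|$. For (vii), I take $a^{+}a^{-}=0$ from Section~\ref{sc:BasicProps}; since $a^{+}-a^{-}=a$ and $a^{+}+a^{-}=|a|$, the definition of $\sqcap$ gives $a^{+}\sqcap a^{-}=\tfrac12\bigl((a^{+}+a^{-})-|a^{+}-a^{-}|\bigr)=\tfrac12(|a|-|a|)=0$. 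Item (viii) is again pure substitution: $a-a\sqcap b=\tfrac12\bigl((a-b)+|a-b|\bigr)=(a-b)^{+}$ and $b-a\sqcap b=\tfrac12\bigl(|a-b|-(a-b)\bigr)=(a-b)^{-}$. Finally (ix) follows at once by combining (viii) with (vii) applied to the element $a-b$ in place of $a$: both the product and the generalized infimum of $a-a\sqcap b=(a-b)^{+}$ and $b-a\sqcap b=(a-b)^{-}$ vanish.

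Since every step is a short manipulation, there is no genuine obstacle; the only points deserving a word of care are the square-root identities $|c|=c$ (for $0\le c$) and $|2a|=2|a|$, which should be justified by uniqueness of the positive square root rather than assumed, and the observation that the order statements in (i) and (ii) rest entirely on the basic inequality $-|x|\le x\le|x|$.
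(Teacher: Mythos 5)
Your proof is correct, and it takes exactly the route the paper intends: the paper explicitly omits the proof of this lemma as a ``straightforward'' direct verification from the definitions of $\sqcap$, $\sqcup$, $a^{+}$, $a^{-}$, and $|a|$, which is precisely what you carry out. The only details worth recording in writing are the ones you already flag --- $|c|=c$ for $0\le c$ and $|2a|=2|a|$ via uniqueness of the positive square root, and the use of the stated fact $a^{+}a^{-}=0$ in items (vii) and (ix).
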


Because of \ref{lm:TopL1} (v) we have a ``duality" between $\sqcap$ and
$\sqcup$ enabling us to derive properties of $\sqcup$ from properties
of $\sqcap$ and \emph{vice versa}.

\begin{theorem} \label{th:TopL2} {\rm\cite[Lemma 2]{TopVL}}
For $a,b\in A$, the following conditions are mutually
equivalent{\rm:} {\rm (i)} $a\sqcap b=0$. {\rm(ii)} $a+b=
|a-b|$. {\rm(iii)} $a,b\in A\sp{+}$ and $a\odot b=0$. {\rm(iv)}
$a,b\in A\sp{+}$ and $ab=ba=0$.
\end{theorem}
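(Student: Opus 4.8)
\textbf{Proof proposal for Theorem \ref{th:TopL2}.}

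The plan is to prove the cycle of implications (i) $\Rightarrow$ (ii) $\Rightarrow$ (iii) $\Rightarrow$ (iv) $\Rightarrow$ (i), invoking the elementary facts about $\sqcap$, $\sqcup$, absolute values, and the quadratic/Jordan products collected earlier in the excerpt. For (i) $\Rightarrow$ (ii), I would simply unwind the definition $a\sqcap b=\frac12(a+b-|a-b|)$: setting this to $0$ gives $a+b=|a-b|$ at once. For (ii) $\Rightarrow$ (iii), I would argue that (ii) forces $a,b\in A\sp{+}$, since $a+b=|a-b|\geq 0$ while also $a-b=a-b$ and $b-a$ are both dominated by $|a-b|=a+b$; more directly, from $a\sqcap b=0$ and Lemma \ref{lm:TopL1}(i) we get $0=a\sqcap b\leq a,b$, so $a,b\in A\sp{+}$. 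Then I square (ii): $(a+b)\sp{2}=|a-b|\sp{2}=(a-b)\sp{2}$, and expanding both sides in the enveloping algebra $R$ yields $ab+ba=-(ab+ba)$, i.e.\ $2(a\odot b)=a\odot b\cdot 2=0$, so $a\odot b=0$. (One must be slightly careful that $(a+b)\sp2$ and $(a-b)\sp2$ both lie in $A$, which they do since squares of elements of $A$ lie in $A$; the cross term $ab+ba=2(a\odot b)\in A$ as well.)

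For (iii) $\Rightarrow$ (iv), this is exactly the statement, recalled in the paragraph on the quadratic mapping in Section \ref{sc:BasicProps}, that for $0\leq b$ one has $aba=0\Rightarrow ab=ba=0$: here since $a\in A\sp{+}$ and $a\odot b=0$ we have $ab+ba=0$, hence $b\sp{1/2}(ab+ba)b\sp{1/2}=0$; but more cleanly, from $a\odot b = 0$ with $a,b\geq 0$ one applies the quadratic map determined by $a\sp{1/2}$ (or $b\sp{1/2}$) to conclude. Concretely: $a\odot b=0$ means $ab=-ba$; multiplying $ab=-ba$ on the left and right appropriately, or noting $a\sp{1/2}ba\sp{1/2}$ is a positive element whose ``square-like'' expression vanishes, gives $a\sp{1/2}b\sp{1/2}=0$ and hence $ab=ba=0$. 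I would phrase this using Lemma \ref{lm:apos&pCa}-style reasoning or directly via the fact that for $x,y\in A\sp{+}$, $x\odot y=0\Leftrightarrow xy=yx=0$, which follows because $y\sp{1/2}xy\sp{1/2}\geq 0$ and $\operatorname{tr}$-free algebra arguments show $y\sp{1/2}xy\sp{1/2}=0$, then $x\sp{1/2}y\sp{1/2}=0$. Finally (iv) $\Rightarrow$ (i): if $a,b\in A\sp{+}$ and $ab=ba=0$, then $aCb$, so $|a-b|=|a|+|b|$ would need justification—actually from $ab=ba=0$ we compute $(a-b)\sp2=a\sp2+b\sp2=(a+b)\sp2$, and since $a-b$ and $a+b$ commute (both lie in $C(\{a,b\})$) Lemma \ref{lm:sqrtprops}(iv) applied to $|a-b|$ and $a+b\in A\sp{+}$, together with $|a-b|\sp2=(a+b)\sp2$, gives $|a-b|=a+b$; hence $a\sqcap b=\frac12(a+b-|a-b|)=0$.

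The step I expect to require the most care is (iii) $\Rightarrow$ (iv)—passing from the \emph{Jordan} vanishing $a\odot b=0$ to the \emph{operator} vanishing $ab=ba=0$. This is where the special structure of a synaptic algebra (the existence of square roots and the behavior of the quadratic map $b\mapsto xbx$) is genuinely used, rather than just formal manipulation of $\sqcap$ and $\sqcup$. The cleanest route is: from $a,b\geq 0$ and $ab+ba=0$, conjugate by $a\sp{1/2}$ to get $a\sp{1/2}(ab+ba)a\sp{1/2}=0$, i.e.\ $a\sp{3/2}ba\sp{1/2}+a\sp{1/2}ba\sp{3/2}=0$; this is not immediately a positivity statement, so instead I would multiply $ab=-ba$ by $b$ to get $ab\sp2=-bab=b\sp2a$ iterating toward $ab\sp{1/2}$ hmm—better: the standard trick is $(a\sp{1/2}b\sp{1/2})\sp\ast(a\sp{1/2}b\sp{1/2})$-type, but we have no adjoint; so use instead that $b\sp{1/2}ab\sp{1/2}\geq 0$ and $\big(b\sp{1/2}ab\sp{1/2}\big)$ has the same carrier behavior, and from $ab+ba=0$ deduce $b\sp{1/2}ab\sp{1/2}\leq$ something forcing it to $0$, then invoke ``$xyx=0$ with $y\geq 0$ implies $xy=yx=0$'' (stated in the excerpt) with $x=b\sp{1/2}$, $y=a$. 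Once $b\sp{1/2}ab\sp{1/2}=0$ we get $ab\sp{1/2}=b\sp{1/2}a=0$ and then $ab=ab\sp{1/2}b\sp{1/2}=0$. I would write this out carefully since it is the only non-formal step; everything else is bookkeeping with Lemma \ref{lm:TopL1} and Lemma \ref{lm:sqrtprops}.
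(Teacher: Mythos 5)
Your cycle (i)$\Rightarrow$(ii)$\Rightarrow$(iii)$\Rightarrow$(iv)$\Rightarrow$(i) matches the paper's proof step for step on three of the four implications: (i)$\Leftrightarrow$(ii) is definitional, (ii)$\Rightarrow$(iii) by squaring after noting $0=a\sqcap b\leq a,b$, and (iv)$\Rightarrow$(i) by computing $(a+b)^{2}=a^{2}+b^{2}=(a-b)^{2}=|a-b|^{2}$ and cancelling square roots of commuting positive elements via Lemma \ref{lm:sqrtprops}. Those parts are fine.

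The genuine gap is exactly where you predicted it: (iii)$\Rightarrow$(iv). You propose to reduce to ``$xyx=0$ with $y\geq 0$ implies $xy=yx=0$'' by first establishing $b^{1/2}ab^{1/2}=0$, but you never actually establish it. Your stated plan --- ``from $ab+ba=0$ deduce $b^{1/2}ab^{1/2}\leq$ something forcing it to $0$'' --- is not an argument, and the natural attempt is circular: conjugating $ab+ba=0$ by $b^{1/2}$ gives $b^{1/2}ab^{3/2}+b^{3/2}ab^{1/2}=0$, which says precisely that $(b^{1/2}ab^{1/2})\odot b=0$ with both factors positive, i.e.\ the same problem one level down. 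The trace argument you allude to is unavailable here (no trace, no adjoint), and your phrase ``$\operatorname{tr}$-free algebra arguments show $y^{1/2}xy^{1/2}=0$'' is an assertion of the conclusion, not a proof. The idea you are missing is purely algebraic: from $ab=-ba$ substitute repeatedly to get
\[
a^{2}b^{2}=a(ab)b=a(-ba)b=-(ab)(ab)=-(-ba)(-ba)=-b(ab)a=-b(-ba)a=b^{2}a^{2},
\]
so $a^{2}Cb^{2}$; since $a,b\geq 0$, Lemma \ref{lm:sqrtprops}(ii) (equivalently, $C(a)=C(a^{2})$ for positive $a$) yields $aCb$, and then $ab=ba$ together with $ab=-ba$ forces $ab=ba=0$. (A slightly shorter variant: $bab=b(ab)=-b^{2}a$ and $bab=(ba)b=-ab^{2}$, so $aCb^{2}$, hence $aCb$.) Without some such step your proof of (iii)$\Rightarrow$(iv) does not close.
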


\begin{proof}
(i) $\Leftrightarrow$ (ii) is obvious.

(ii) $\Rightarrow$ (iii). Assume (ii). Then (i) holds, so $0\leq a,b$
by Lemma \ref{lm:TopL1} (i). Also, $(a+b)\sp{2}=(a-b)\sp{2}$, whence
$ab+ba=-ab-ba$, i.e., $2(ab+ba)=0$, and so $a\odot b=0$.

(iii) $\Rightarrow$ (iv). Assume (iii). Then $ab+ba=0$, so $ab=-ba$,
and we have
\[
a\sp{2}b\sp{2}=a(ab)b=a(-ba)b=-abab
\]
\[
=-(-ba)(-ba)=-b(ab)a=-b(-ba)a=b\sp{2}a\sp{2},
\]
whence $a\sp{2}Cb\sp{2}$. Since $0\leq a,b$, it follows from Lemma
\ref{lm:sqrtprops} (ii) that $aCb$, and therefore $ab=-ab$, so
$ab=ba=0$.

(iv) $\Rightarrow$ (ii). Assume that $0\leq a,b$ and $ab=ba=0$. Then
$aCb$, so $(a+b)C|a-b|$. Also, $(a+b)\sp{2}=a\sp{2}+b\sp{2}=(a-b)
\sp{2}=|a-b|\sp{2}$, whence $a+b=|a-b|$ by Lemma \ref{lm:sqrtprops}
(iv).
\end{proof}

\begin{corollary} {\rm\cite[Corollary 1]{TopVL}}
For $a,b\in A$, the following conditions are mutually equivalent
{\rm: (i)} $|a|\sqcap |b|=0$. {\rm(ii)} $|a||b|=0$. {\rm(iii)}
$ab=0$. {\rm(iv)} $a\sp{+}b\sp{+}=a\sp{+}b\sp{-}=a\sp{-}b\sp{+}
=a\sp{-}b\sp{-}=0$.
\end{corollary}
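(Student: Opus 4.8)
The plan is to deduce the corollary from Theorem~\ref{th:TopL2} applied to the pair $|a|,|b|\in A\sp{+}$, together with the basic facts $a\sp{2}=|a|\sp{2}$, $|a|\dg=a\dg$, and the carrier characterization of annihilation recorded just before Lemma~\ref{lm:carrierinequalities}.

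\medskip

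First, for (i)~$\Leftrightarrow$~(ii): since $|a|,|b|\in A\sp{+}$, Theorem~\ref{th:TopL2} (with $|a|$ in place of $a$ and $|b|$ in place of $b$) gives that $|a|\sqcap|b|=0$ holds iff $|a|\,|b|=|b|\,|a|=0$; but for positive elements $|a|\,|b|=0$ already forces $|b|\,|a|=0$ (indeed the quadratic-mapping fact in Section~\ref{sc:BasicProps} shows $0\le b\Rightarrow(\,|a|\,b\,|a|=0\Leftrightarrow |a|\,b=b\,|a|=0\,)$, or one simply invokes the carrier symmetry), so (i)~$\Leftrightarrow$~(ii). Next, for (ii)~$\Leftrightarrow$~(iii): using the displayed chain of carrier equivalences, $ab=0\Leftrightarrow a\dg b\dg=0$, and likewise $|a|\,|b|=0\Leftrightarrow |a|\dg|b|\dg=0$; since $|a|\dg=a\dg$ and $|b|\dg=b\dg$, the two right-hand sides coincide, giving (ii)~$\Leftrightarrow$~(iii). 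Equivalently and more directly: $ab=0\Rightarrow a\dg=0\ \text{or annihilates}\ b$, hence $a\dg b=0$, hence $b a\dg=0$, hence $b\dg a\dg=0$, hence $|b|\,|a|=0$; and conversely $|a|\,|b|=0\Rightarrow a\dg b\dg=0\Rightarrow ab=0$.

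\medskip

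For the equivalence of (iii) with (iv): recall $a=a\sp{+}-a\sp{-}$ with $a\sp{+}a\sp{-}=a\sp{-}a\sp{+}=0$ (Lemma~\ref{lm:TopL1}(vii) and Theorem~\ref{th:TopL2}(iv)), and $|a|=a\sp{+}+a\sp{-}$, and similarly for $b$. Assuming (iv), expand $|a|\,|b|=(a\sp{+}+a\sp{-})(b\sp{+}+b\sp{-})$ as the sum of the four vanishing products, so $|a|\,|b|=0$, i.e.\ (ii), whence (iii). Conversely, assume (iii), i.e.\ $ab=0$; then by what we proved $a\dg b\dg=0$, and since $a\sp{+}\dg,a\sp{-}\dg\le a\dg$ (carriers of positive parts lie below the carrier of $|a|=a\dg$, using $(a\sp{+})\dg\le(a\sp{+}+a\sp{-})\dg=|a|\dg=a\dg$ via Lemma~\ref{lm:carrierinequalities}(i) or the fact that $a\sp{+}=|a|\,p$ for a spectral projection $p$) and likewise $b\sp{+}\dg,b\sp{-}\dg\le b\dg$, each of the four products $a\sp{\pm}b\sp{\pm}$ has carrier bounded by $a\dg b\dg=0$, hence vanishes; this is (iv).

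\medskip

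The only place that needs a little care is the step ``$ab=0\Rightarrow a\sp{\pm}b\sp{\pm}=0$,'' i.e.\ controlling the carriers of the positive and negative parts by the carrier of the whole element; the cleanest route is to note $a\sp{+},a\sp{-}\le|a|$ in $A\sp{+}$ and apply Lemma~\ref{lm:carrierinequalities}(iii) to get $(a\sp{+})\dg,(a\sp{-})\dg\le|a|\dg=a\dg$, and symmetrically for $b$, after which the carrier-annihilation equivalences close the argument. Everything else is a direct transcription of Theorem~\ref{th:TopL2} and the displayed carrier identities, so no genuine obstacle arises beyond bookkeeping.
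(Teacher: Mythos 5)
Your proposal is correct and follows essentially the same route as the paper: Theorem \ref{th:TopL2} for (i) $\Leftrightarrow$ (ii), the carrier identities $|a|\dg=a\dg$, $|b|\dg=b\dg$ for (ii) $\Leftrightarrow$ (iii), and the bounds $0\leq a\sp{\pm}\leq|a|$, $0\leq b\sp{\pm}\leq|b|$ for (ii) $\Rightarrow$ (iv). The only difference is cosmetic: where you re-derive the implication ``$|a||b|=0$ and $0\leq a\sp{\pm}\leq|a|$, $0\leq b\sp{\pm}\leq|b|$ imply $a\sp{\pm}b\sp{\pm}=0$'' by hand via Lemma \ref{lm:carrierinequalities}(iii) and carrier orthogonality, the paper simply cites Lemma \ref{lm:TopL3}(i), whose proof is exactly that argument.
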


\begin{proof}
(i) $\Leftrightarrow$ (ii) $\Leftrightarrow$ (iii). By Theorem
\ref{th:TopL2} $|a|\sqcap|b|=0\Leftrightarrow|a||b|=0$, and (ii)
$\Leftrightarrow$ (iii) follows immediately from the facts that
$|a|\dg=a\dg$ and $|b|\dg=b\dg$.

(ii) $\Leftrightarrow$ (iv). Suppose that $|a||b|=0$. Since $0
\leq a\sp{+}, a\sp{-}\leq a\sp{+}+a\sp{-}=|a|$ and likewise $0
\leq b\sp{+}, b\sp{-}\leq |b|$, Lemma \ref{lm:TopL3} (i) yields
(iv). That (iv) $\Rightarrow$ (ii) is obvious.
\end{proof}

Part (i) of the next lemma is a generalization to the synaptic
algebra $A$ of a remark of S. Gudder \cite[p. 2639]{Stan} and part
(ii) corresponds to \cite[Corollary 2]{TopVL}.

\begin{lemma} \label{lm:TopC2}
Let $a,b\in A\sp{+}$. Then{\rm: (i)} $aCb\Rightarrow 0\leq a\sqcap b$.
{\rm (ii)} $a\sqcap b\leq 0\Rightarrow a\sqcap b=ab=ba=0$.
\end{lemma}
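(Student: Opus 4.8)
For part (i), the plan is to reduce to the commutative setting where the generalized infimum actually is an infimum. Since $aCb$ with $a,b\in A\sp{+}$, the bicommutant $CC(\{a,b\})$ is a commutative sub-synaptic algebra of $A$ containing $a$, $b$, and (being a sub-synaptic algebra) $|a-b|$, hence also $a\sqcap b=\frac12(a+b-|a-b|)$. In a commutative synaptic algebra the order structure is that of a vector lattice (this is the forward direction of \cite[Theorem 2.8]{ComSyn}, recalled in the discussion preceding Lemma \ref{lm:CondsforVL}), and there $a\sqcap b$ coincides with the lattice infimum $a\wedge b$. Since $0\leq a$ and $0\leq b$, we get $0\leq a\wedge b=a\sqcap b$. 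If one prefers to avoid invoking the vector-lattice result, an alternative is the direct computation: $aCb$ gives $aC|a-b|$ and $bC|a-b|$, so $2(a\sqcap b)=a+b-|a-b|$ lies in a commutative set; then multiply $a\sqcap b$ on left and right by the carrier-type projections coming from the spectral resolution of $a-b$, using $\min(a,b)=\frac12(a+b-|a-b|)$ pointwise on spectral components. The cleanest route, though, is the sub-synaptic-algebra reduction.

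For part (ii), the plan is to combine part (i) with Theorem \ref{th:TopL2}. Assume $a\sqcap b\leq 0$. I would first show $aCb$, which by part (i) forces $0\leq a\sqcap b$, and then $a\sqcap b=0$ together with Theorem \ref{th:TopL2} (i)$\Rightarrow$(iv) gives $ab=ba=0$ (and incidentally $a\odot b=0$). The question is how to extract $aCb$ from the hypothesis $a\sqcap b\leq 0$. Here I would argue: since $a\sqcap b\leq a$ and $a\sqcap b\leq b$ by Lemma \ref{lm:TopL1} (i), and $a\sqcap b\leq 0$, the element $a\sqcap b$ is a lower bound of $a$, $b$, and $0$ that is $\leq 0$; hence $0\leq a-a\sqcap b$ and $0\leq b-a\sqcap b$ while $a\sqcap b\leq 0$, so $-(a\sqcap b)\in A\sp{+}$. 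By Lemma \ref{lm:TopL1} (viii), $a-a\sqcap b=(a-b)\sp{+}$ and $b-a\sqcap b=(a-b)\sp{-}$, and these have product zero by the standard identity $a\sp{+}a\sp{-}=0$ applied to $a-b$. So it suffices to handle the ``$-(a\sqcap b)$'' piece: writing $c:=-(a\sqcap b)\in A\sp{+}$, we have $a=(a-b)\sp{+}-c$ wait—more carefully, $a=(a\sqcap b)+(a-b)\sp{+}=-c+(a-b)\sp{+}$ and $b=-c+(a-b)\sp{-}$.

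The main obstacle is precisely this last reduction—showing that the extra negative part $c=-(a\sqcap b)$ is forced to commute with everything in sight and ultimately to vanish. I expect the argument to go: from $0\leq a=(a-b)\sp{+}-c$ we get $c\leq(a-b)\sp{+}$, and from $0\leq b=(a-b)\sp{-}-c$ we get $c\leq(a-b)\sp{-}$; since $(a-b)\sp{+}(a-b)\sp{-}=0$, Lemma \ref{lm:TopL3} (ii) applied with the two disjoint bounds $(a-b)\sp{+}$ and $(a-b)\sp{-}$ yields $c=0$. Hence $a\sqcap b=0$, and then $a=(a-b)\sp{+}\geq 0$, $b=(a-b)\sp{-}\geq 0$, so $a,b\in A\sp{+}$ and Theorem \ref{th:TopL2} gives $ab=ba=0$. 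So the crucial tool is Lemma \ref{lm:TopL3} (ii) on orthogonal upper bounds, together with the decomposition in Lemma \ref{lm:TopL1} (viii); part (i) then becomes redundant for proving (ii), but it is the natural companion statement and its proof via the sub-synaptic-algebra reduction is the conceptual heart.
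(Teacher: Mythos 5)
Your argument is correct, but the two halves relate to the paper's proof differently. For part (ii) you end up with essentially the paper's argument in a slightly different packaging: the paper observes $0\leq a\leq a-a\sqcap b=(a-b)\sp{+}$ and $0\leq b\leq b-a\sqcap b=(a-b)\sp{-}$ and applies Lemma \ref{lm:TopL3}~(i) to get $ab=ba=0$ directly, then invokes Theorem \ref{th:TopL2} to conclude $a\sqcap b=0$; you instead isolate $c=-(a\sqcap b)$, squeeze it between the two orthogonal elements $(a-b)\sp{+}$ and $(a-b)\sp{-}$, and kill it with Lemma \ref{lm:TopL3}~(ii) -- same key lemmas (\ref{lm:TopL1}~(viii)/(ix) and \ref{lm:TopL3}), same content, and your observation that part (i) is not needed for part (ii) matches the paper. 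For part (i) your route is genuinely different: the paper gives a short self-contained computation ($aCb$ and $a,b\geq 0$ give $0\leq ab=ba$, hence $(a-b)\sp{2}\leq(a+b)\sp{2}$, and Lemma \ref{lm:sqrtprops}~(iv) yields $|a-b|\leq a+b$, i.e.\ $0\leq a\sqcap b$), whereas you pass to the commutative sub-synaptic algebra $CC(\{a,b\})$, cite the external result that a commutative subspace closed under $x\mapsto x\sp{+}$ is a vector lattice, and identify $a\sqcap b$ with the lattice infimum via Theorem \ref{th:Topveclat}~(i). That is logically sound and not circular (Theorems \ref{th:SC2.4} and \ref{th:Topveclat} do not depend on this lemma), but it imports heavier machinery and forward references for a fact that falls out of a two-line square comparison; your hand-waved ``alternative direct computation'' via spectral resolutions is not needed and, as sketched, is not a proof -- the clean elementary route is the inequality $(a-b)\sp{2}\leq(a+b)\sp{2}$ combined with Lemma \ref{lm:sqrtprops}~(iv).
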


\begin{proof}
Assume that $0\leq a,b$.

(i) Suppose that $aCb$. Then $0\leq ab=ba$, whence $0\leq 2(ab+ba)$, so
$-ab-ba\leq ab+ba$, and it follows that $|a-b|\sp{2}=(a-b)\sp{2}\leq(a+b)
\sp{2}$. As $|a-b|C(a+b)$, Lemma \ref{lm:sqrtprops} (iv) yields $|a-b|
\leq a+b$, and therefore $0\leq\frac12(a+b-|a-b|)=a\sqcap b$.

(ii) Suppose that $a\sqcap b\leq 0$. Then $0\leq a\leq a-a\sqcap b$ and
$0\leq b\leq b-a\sqcap b$, whence $ab=ba=0$ by Lemma \ref{lm:TopL1} (ix)
and Lemma \ref{lm:TopL3} (i), so $a\sqcap b=0$ by Theorem \ref{th:TopL2}.
\end{proof}

\begin{lemma} \label{lm:TL4} {\rm\cite[Lemma 4]{TopVL}}
For $a,b\in A$, $aCb\Leftrightarrow a,b\in C(a\sqcap b)$.
\end{lemma}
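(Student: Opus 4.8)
The plan is to prove the two implications separately. The forward direction $aCb \Rightarrow a,b \in C(a\sqcap b)$ is the easy one: if $aCb$, then $a$ and $b$ both commute with $a-b$, hence with $(a-b)^2$, hence with $|a-b| = ((a-b)^2)^{1/2}$ (using that $|a-b|\in CC(a-b)$, as recorded in the excerpt). Consequently $a$ and $b$ commute with the linear combination $a\sqcap b = \frac12(a+b-|a-b|)$, so $a,b\in C(a\sqcap b)$.

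The substantive direction is the converse: assume $a,b\in C(a\sqcap b)$ and deduce $aCb$. The natural first move is to translate so that $a\sqcap b$ becomes $0$. By Lemma \ref{lm:TopL1}(iv), $(a - a\sqcap b)\sqcap(b-a\sqcap b) = (a\sqcap b) - (a\sqcap b) = 0$; set $a' := a - a\sqcap b$ and $b' := b - a\sqcap b$, so that $a'\sqcap b' = 0$. Since $a\sqcap b$ commutes with both $a$ and $b$, we have $aCb \Leftrightarrow a'Cb'$, so it suffices to handle the case $a\sqcap b = 0$. In that case Theorem \ref{th:TopL2} gives $a,b\in A^+$ and $ab=ba=0$, and in particular $aCb$. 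Thus the whole converse reduces to checking that the hypothesis "$a,b\in C(a\sqcap b)$" is preserved under this translation — but that is immediate, since $a' = a - a\sqcap b$ and $b' = b - a\sqcap b$ are real-linear combinations of $a$, $b$, and $a\sqcap b$, each of which lies in $C(a\sqcap b)$, and $a'\sqcap b' = 0$ lies in everything; more directly, $a\in C(a\sqcap b)$ forces $a'\in C(a\sqcap b) = C(0)$ trivially, and one only needs $a,b\in C(a\sqcap b)$ to run the reduction.

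Let me restate the converse argument cleanly to make sure the logic closes: suppose $a,b\in C(a\sqcap b)$. Put $c := a\sqcap b$, $a' := a-c$, $b' := b-c$. By Lemma \ref{lm:TopL1}(iv), $a'\sqcap b' = (a\sqcap b) - c = 0$, so by Theorem \ref{th:TopL2}, $a'b' = b'a' = 0$; in particular $a'Cb'$. Since $c$ commutes with $a$ and with $b$ (this is exactly the hypothesis $a,b\in C(c)$), $a = a' + c$ and $b = b' + c$ commute with each other, i.e. $aCb$.

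The only point requiring any care is that the hypothesis is used precisely to guarantee $cCa$ and $cCb$ (so that reintroducing $c$ in $a = a'+c$, $b = b'+c$ does not break commutativity); everything else is Lemma \ref{lm:TopL1}(iv) and Theorem \ref{th:TopL2}. I expect no real obstacle here — the lemma is essentially a bookkeeping consequence of the already-established equivalence in Theorem \ref{th:TopL2} together with the translation identity in Lemma \ref{lm:TopL1}(iv) — but the step to watch is confirming that $C(|a-b|)\ni a,b$ in the forward direction, which relies on the stated fact $|x|\in CC(x)$ applied with $x = a-b$.
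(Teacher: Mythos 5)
Your proof is correct and follows essentially the same route as the paper: both arguments hinge on the fact that for $c=a\sqcap b$ the translates $a-c$ and $b-c$ have product zero (you obtain this via Lemma \ref{lm:TopL1}(iv) together with Theorem \ref{th:TopL2}, while the paper cites Lemma \ref{lm:TopL1}(ix) directly) and then use the hypothesis $cCa$, $cCb$ to expand $ab$ and $ba$ and see they agree. The forward direction, which the paper dismisses as obvious, you also handle correctly via $|a-b|\in CC(a-b)$.
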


\begin{proof}
Put $c:=a\sqcap b$. Then by Lemma \ref{lm:TopL1} (ix), $(a-c)(b-c)=
ab-ac-cb+c\sp{2}=0$ and $(b-c)(a-c)=ba-bc-ca+c\sp{2}=0$, whence $ab=
cb+ac-c\sp{2}$ and $ba=bc+ca-c\sp{2}$, so if $cb=bc$ and $ac=ca$, it
follows that $ab=ba$. The converse is obvious.
\end{proof}

\begin{lemma} \label{lm:TC3} {\rm{\cite[Corollary 3]{TopVL}}}
If $a,b\in A$, and $a+b\in A\sp{+}$, then $0\leq a\odot b
\Rightarrow 0\leq a\sqcap b$.
\end{lemma}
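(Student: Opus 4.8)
The plan is to run essentially the same square‑root argument that proved Lemma~\ref{lm:TopC2}(i) and Lemma~\ref{lm:AbValProps}(ii), but to replace the appeal to Lemma~\ref{lm:sqrtprops}(iv) (which needs commutativity) by the monotone square root property of Theorem~\ref{th:MSR}, since here we are not given $aCb$. The goal is to show $|a-b|\leq a+b$, because then $a\sqcap b=\tfrac12(a+b-|a-b|)\geq 0$ immediately.

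First I would compute, purely in the enveloping algebra $R$,
\[
(a+b)\sp{2}-(a-b)\sp{2}=(a\sp{2}+ab+ba+b\sp{2})-(a\sp{2}-ab-ba+b\sp{2})=2(ab+ba)=4(a\odot b).
\]
By hypothesis $0\leq a\odot b$, so $(a-b)\sp{2}\leq(a+b)\sp{2}$. Since $(a-b)\sp{2}=|a-b|\sp{2}\in A\sp{+}$, this gives $0\leq|a-b|\sp{2}\leq(a+b)\sp{2}$.

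Next I would apply Theorem~\ref{th:MSR} to the inequality $0\leq|a-b|\sp{2}\leq(a+b)\sp{2}$ to obtain $(|a-b|\sp{2})\sp{1/2}\leq((a+b)\sp{2})\sp{1/2}$, i.e.\ $|a-b|\leq|a+b|$. Finally, since $a+b\in A\sp{+}$ and the square root of a positive element is unique, $|a+b|=((a+b)\sp{2})\sp{1/2}=a+b$, so $|a-b|\leq a+b$ and therefore $a\sqcap b=\tfrac12(a+b-|a-b|)\geq 0$, as required.

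I do not expect any real obstacle here: the only delicate point is that we cannot invoke Lemma~\ref{lm:sqrtprops}(iv) to pass from $|a-b|\sp{2}\leq(a+b)\sp{2}$ to $|a-b|\leq a+b$, because $|a-b|$ and $a+b$ need not commute in the absence of $aCb$; Theorem~\ref{th:MSR} is exactly what removes that requirement, and it is the one place where the full MSR property (rather than its easy commutative case) is genuinely used.
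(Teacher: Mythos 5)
Your proof is correct and is essentially the same as the paper's: both establish $(a-b)^{2}\leq(a+b)^{2}$ from $0\leq a\odot b$ and then invoke the MSR property (Theorem \ref{th:MSR}) to conclude $|a-b|\leq a+b$, hence $0\leq a\sqcap b$. Your observation that MSR is precisely what replaces Lemma \ref{lm:sqrtprops}(iv) in the absence of commutativity matches the paper's reasoning exactly.
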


\begin{proof}
Suppose that $a+b\in A\sp{+}$, and $0\leq ab+ba$. Then $-ab-ba
\leq ab+ba$, whence $|a-b|\sp{2}=a\sp{2}-ab-ba+b\sp{2}\leq a\sp{2}
+ab+ba+b\sp{2}=(a+b)\sp{2}$ with $0\leq a+b$. Then by MSR, $|a-b|
\leq a+b$, and it follows that $0\leq a\sqcap b$.
\end{proof}

The next theorem strengthens \cite[Corollary 2.4 (a)]{Stan}.

\begin{theorem} \label{th:SC2.4}
If $a,b\in A$, then $a\sqcap b$ is a maximal lower bound in $A$ for $a$ and $b$.
\end{theorem}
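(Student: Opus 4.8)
The plan is to show that if $c$ is any lower bound for $a$ and $b$ in $A$ with $a\sqcap b\leq c$, then $c=a\sqcap b$. So assume $c\leq a$, $c\leq b$, and $a\sqcap b\leq c$. The natural first move is to translate to the case where the lower bound in question is $0$: by Lemma \ref{lm:TopL1} (iv), subtracting $a\sqcap b$ preserves $\sqcap$, so replacing $a$ by $a-(a\sqcap b)$, $b$ by $b-(a\sqcap b)$, and $c$ by $c-(a\sqcap b)$, we may assume $a\sqcap b=0$, hence (Lemma \ref{lm:TopL1} (i)) $0\leq a,b$, and $0\leq c\leq a,b$. The goal is now reduced to showing $c=0$.

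Next I would exploit the disjointness of $a$ and $b$ supplied by Theorem \ref{th:TopL2}: since $a\sqcap b=0$ and $0\leq a,b$, that theorem gives $ab=ba=0$, i.e. $a$ and $b$ are orthogonal positive elements, and moreover $a\dg b\dg=0$ (using $a\dg b=0\Leftrightarrow a\dg b\dg=0$ from the carrier properties, together with $ab=0\Leftrightarrow a\dg b=0$). Now apply Lemma \ref{lm:TopL3} (ii): we have $cd=0$-type data with $0\leq c\leq a$ and $0\leq c\leq b$ where $ab=0$, so Lemma \ref{lm:TopL3} (ii) (with the roles $c\leadsto a$, $d\leadsto b$, and the element bounded by both being our $c$) forces $c=0$. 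Tracing back through the translation, $c=a\sqcap b$, which proves that $a\sqcap b$ is a maximal lower bound.

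The only point requiring care is the very first reduction: one must check that the hypotheses really do transport under the shift by $a\sqcap b$. That $a\sqcap b$ shifts to $0$ is Lemma \ref{lm:TopL1} (iv); that the shifted $a$ and $b$ are $\geq 0$ is then immediate from Lemma \ref{lm:TopL1} (i); and that the shifted $c$ lies between $0$ and the shifted $a,b$ is just linearity of the order. After that the argument is essentially a packaging of Theorem \ref{th:TopL2} and Lemma \ref{lm:TopL3}, so I expect no real obstacle — the substance is already carried by those two results, and the statement is really the assertion that no lower bound can be strictly inserted between $a\sqcap b$ and $a,b$. If one prefers to avoid the translation, an alternative is to argue directly: from $a\sqcap b\leq c\leq a$ and $a\sqcap b\leq c\leq b$ one gets $0\leq c-a\sqcap b\leq a-a\sqcap b$ and $0\leq c-a\sqcap b\leq b-a\sqcap b$, and by Lemma \ref{lm:TopL1} (ix) the two elements $a-a\sqcap b$ and $b-a\sqcap b$ have product $0$, so Lemma \ref{lm:TopL3} (ii) again yields $c-a\sqcap b=0$.
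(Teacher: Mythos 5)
Your proof is correct and follows essentially the same route as the paper: the ``alternative'' you give at the end is verbatim the paper's argument (Lemma \ref{lm:TopL1} (ix) plus Lemma \ref{lm:TopL3} (ii)), and your primary version merely prefaces it with a translation by $a\sqcap b$ and invokes Theorem \ref{th:TopL2} where the paper uses Lemma \ref{lm:TopL1} (ix) directly.
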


\begin{proof} By Lemma \ref{lm:TopL1} (i), $a\sqcap b\leq a,b$. Suppose $c\in A$
with $a\sqcap b\leq c\leq a,b$. Then
\[
0\leq c-a\sqcap b\leq a-a\sqcap b, b-a\sqcap b.
\]
By Lemma \ref{lm:TopL1} (ix), $(a-a\sqcap b)(b-a\sqcap b)=0$, whence by
Lemma \ref{lm:TopL3} (ii), $c-a\sqcap b=0$, i.e., $c=a\sqcap b$.
\end{proof}

Using the results above, properties of the generalized infimum and supremum
for effects and projections obtained in \cite[Corollaries 2.4, 2.5, 2.6, 2.7]
{Stan} and \cite[Proposition 1 and Corollary 3]{TopVL} are easily extended
to the synaptic algebra $A$.

The following lemma is amusing, but for our present purposes, it is only a
curiosity. Topping calls self-adjoint operators $a$ and $b$ \emph{disjunctive}
\cite[p. 17]{TopVL} iff they satisfy the condition in the hypothesis of part
(i) of the lemma. Topping's proof goes through for the synaptic algebra $A$
and will not be repeated here.

\begin{lemma} \label{lm:TopP21} Cf. \rm{{\cite[Proposition 2]{TopVL}}}
Let $a,b,p\in A$. Then{\rm:}
\begin{enumerate}
\item $|a-b|\sqcap(a\sqcap b)=0\Rightarrow0\leq a\sqcap b,a,b$ and
 $ab=ba=(a\sqcap b)\sp{2}$.
\item $|1-p|\sqcap(1\sqcap p)=0\Leftrightarrow p\in P$.
\end{enumerate}
\end{lemma}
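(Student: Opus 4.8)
The plan is to prove the two parts separately, and in each case the main labour is to unwind the hypothesis using the identities for $\sqcap$ already collected in Lemma~\ref{lm:TopL1} together with the characterisations of disjointness in Theorem~\ref{th:TopL2}.

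For part~(i), write $c:=a\sqcap b$ and recall from Lemma~\ref{lm:TopL1}~(iii) that $|a-b|=a\sqcup b-a\sqcap b=(a+b)-2c$ (using Dedekind's law $a+b=a\sqcap b+a\sqcup b$). The hypothesis $|a-b|\sqcap c=0$ is, by Theorem~\ref{th:TopL2}, equivalent to $|a-b|,c\in A\sp{+}$ and $|a-b|\odot c=|a-b|c=c|a-b|=0$. So immediately $0\leq c=a\sqcap b$, and then $0\leq a\sqcap b\leq a,b$ by Lemma~\ref{lm:TopL1}~(i), giving the first assertion. For the product formula, I would substitute $|a-b|=(a+b)-2c$ into $|a-b|c=0$ to get $(a+b)c=2c\sp{2}$, and similarly $c(a+b)=2c\sp{2}$ from $c|a-b|=0$; combining, $ac+bc=ca+cb=2c\sp{2}$. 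It remains to separate $ac$ from $bc$. For this I would use Lemma~\ref{lm:TopL1}~(ix), which says $(a-c)(b-c)=(b-c)(a-c)=0$; expanding $(a-c)(b-c)=0$ gives $ab=ac+cb-c\sp{2}$, and $(b-c)(a-c)=0$ gives $ba=bc+ca-c\sp{2}$. Adding these two, $ab+ba=(ac+bc)+(ca+cb)-2c\sp{2}=2c\sp{2}+2c\sp{2}-2c\sp{2}=2c\sp{2}$. To upgrade this to $ab=ba=c\sp{2}$ one wants $aCb$; by Lemma~\ref{lm:TL4} it suffices to show $a,b\in C(c)$, i.e.\ $ac=ca$ and $bc=cb$. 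Here I expect the crux: from $ac+bc=ca+cb$ one needs to peel off one summand. I would try showing $c\in CC(a-b)$: indeed $|a-b|\in CC(a-b)$ always, and the relation $|a-b|c=c|a-b|=0$ plus $c=\frac12((a+b)-|a-b|)$ should let one verify that $c$ commutes with anything commuting with both $a+b$ and $|a-b|$; alternatively, note $(a-c)(b-c)=0$ forces $(a-c)\dg(b-c)\dg=0$, and symmetrically, so $a-c$ and $b-c$ generate orthogonal carriers, from which $c$-commutation of $a$ and $b$ should follow via Lemma~\ref{lm:TopL3}. Once $aCb$ is in hand, $|a-b|C(a+b)$, and $0\leq ab=ba$ since $0\leq a,b$ commute; then $ab+ba=2c\sp{2}$ gives $ab=ba=c\sp{2}=(a\sqcap b)\sp{2}$.

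For part~(ii), the direction $p\in P\Rightarrow|1-p|\sqcap(1\sqcap p)=0$ is a direct computation: if $p\sp{2}=p$ then $0\leq p\leq 1$, so $1\sqcap p=p$ by Lemma~\ref{lm:TopL1}~(ii), and $|1-p|=1-p$ since $0\leq 1-p$ and $(1-p)$ is already positive (indeed a projection); then $|1-p|\sqcap(1\sqcap p)=(1-p)\sqcap p$, and $(1-p)p=p(1-p)=0$ with $1-p,p\in A\sp{+}$, so by Theorem~\ref{th:TopL2} this generalized infimum is $0$. Conversely, suppose $|1-p|\sqcap(1\sqcap p)=0$. Apply part~(i) with $a:=1$, $b:=p$: note $a-b=1-p$, so $|a-b|=|1-p|$ and $a\sqcap b=1\sqcap p$, and the hypothesis is exactly $|a-b|\sqcap(a\sqcap b)=0$. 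Part~(i) then yields $0\leq 1\sqcap p\leq 1,p$ and $1\cdot p=p\cdot 1=(1\sqcap p)\sp{2}$, i.e.\ $p=(1\sqcap p)\sp{2}$. Since $0\leq p\leq 1$, we get $1\sqcap p=p$ from Lemma~\ref{lm:TopL1}~(ii), whence $p=p\sp{2}$, that is, $p\in P$.

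The step I expect to be the real obstacle is establishing $aCb$ (equivalently $a,b\in C(a\sqcap b)$) in part~(i) — everything downstream is then routine given Lemmas~\ref{lm:TopL1}, \ref{lm:sqrtprops}, \ref{lm:TopL3} and \ref{lm:TL4}. My preferred route is through carriers: $(a-c)(b-c)=(b-c)(a-c)=0$ gives $(a-c)\dg(b-c)\dg=0$, so by Lemma~\ref{lm:TopL3}~(i) applied to suitable positive multiples, $a-c$ and $b-c$ annihilate each other on both sides, and since $c$ itself is sandwiched between the commuting data and kills $|a-b|=( a-c)\sqcup(b-c)-0$-type expressions, one extracts $ac=ca$, $bc=cb$. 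If that argument proves fiddly, the fallback is to follow Topping's original reasoning in \cite[Proposition 2]{TopVL}, which as the text notes goes through verbatim in the synaptic setting.
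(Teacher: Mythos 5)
Your skeleton for part (i) is right---reduce the hypothesis via Theorem \ref{th:TopL2} to $0\leq|a-b|,c$ and $|a-b|c=c|a-b|=0$ where $c:=a\sqcap b$, then combine $(a+b)c=c(a+b)=2c\sp{2}$ with the expansions of $(a-c)(b-c)=(b-c)(a-c)=0$---but the step you yourself flag as the obstacle, namely $ac=ca$ and $bc=cb$, is genuinely not proved, and neither of your two suggested routes closes it. The first (show $c\in CC(a-b)$, so that $c$ commutes with whatever commutes with $a+b$ and $|a-b|$) is circular: to extract $aCc$ from it you would need $a$ to commute with $|a-b|$, which is essentially what you are trying to establish. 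The second ($(a-c)\dg(b-c)\dg=0$ plus Lemma \ref{lm:TopL3}) produces annihilation relations, not commutation relations involving $c$. The missing idea is a different use of carriers: since $|x|\dg=x\dg$ and $yx=0\Leftrightarrow yx\dg=0$, the relation $c|a-b|=0$ is \emph{equivalent} to $c(a-b)=0$, i.e.\ $ca=cb$, and symmetrically $(a-b)c=0$ gives $ac=bc$. Feeding this into your identities $ac+bc=ca+cb=2c\sp{2}$ yields $ac=bc=ca=cb=c\sp{2}$ at once, and then $ab=ac+cb-c\sp{2}=c\sp{2}=bc+ca-c\sp{2}=ba$, with no need for Lemma \ref{lm:TL4} at all. (The paper itself does not write this out; it merely notes that Topping's argument transfers to $A$.)

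In part (ii) the forward direction is fine, but in the converse you assert $0\leq p\leq 1$ when part (i) only delivers $0\leq p$; the inequality $p\leq 1$ must be earned before you may invoke Lemma \ref{lm:TopL1} (ii) to get $1\sqcap p=p$. It does follow in one extra line: from $0\leq 1\sqcap p\leq 1$ we get $1\sqcap p\in E$, hence $p=(1\sqcap p)\sp{2}\leq 1\sqcap p\leq p$, which gives $1\sqcap p=p$ directly (alternatively, Lemma \ref{lm:sqrtprops} (iii) with $b=1$ gives $(1\sqcap p)\sp{2}\leq 1$, i.e.\ $p\leq 1$). With that inserted, your argument for (ii) is correct.
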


\section{Vector lattices in $A$} \label{sc:VecLat}

In this section \emph{we assume that $V$ is a linear subspace of $A$}.
Thus, as mentioned above, $V$ is a partially ordered real linear
space with positive cone $V\sp{+}:=V\cap A\sp{+}$ under the restriction
of the relation $\leq$ on $A$, and if $1\in V$, then $V$ is an order-unit
normed space. In this section we shall obtain analogues for $V\subseteq A$
of Topping's results for $V\subseteq{\mathcal B}\sp{sa}({\frak H})$ in
\cite[\S 3]{TopVL}.

\begin{remarks} \label{rm:PropsofV}
We shall be especially interested in cases in which $V$ has one or more
of the following properties: (i) $1\in V$, (ii) $a\in V\Rightarrow
a\sp{2}\in V$, whence $V$ is closed under Jordan products;  (iii) $a
\in V\Rightarrow |a|\in V$; (iv) $a\in V\Rightarrow a\dg\in V$; (v) $V$
is commutative; (vi) $V$ is a vector lattice. In \cite[\S 3]{TopVL}, Topping
refers to conditions (iii) and (vi) as properties A and L, respectively.
\end{remarks}

\begin{remarks}
There are four very simple ways to form linear subspaces $V$ of $A$ with
properties (i)--(v) in Remarks \ref{rm:PropsofV}, namely:
\begin{enumerate}
\item[(1)] Take any commutative subset $B$ of $A$ (e.g., $B=\{a\}$ for any
$a\in A$) and form $V:=CC(B)$. Then $B\subseteq V$.
\item[(2)] Take any commutative subset $B$ of $A$ and extend $B$ to a
C-block $V$ (Zorn). Then $B\subseteq V$.
\item[(3)] Take any block $B$ in $P$ and form $V=C(B)$. Then if $a\in A$,
it follows that $a\in V$ iff, for all $\lambda\in\reals$, every projection
$p\sb{a,\lambda}$, in the spectral resolution of $a$ belongs to $B$.
Moreover, $V$ is a C-block and $B\subseteq V$.
\item[(4)] Take any element $a\in A$, expand the commutative set $\{p
\sb{a,\lambda}:\lambda\in\reals\}$ of all projections in the spectral
resolution of $a$ to a block $B\sb{a}$ in $P$ (Zorn), and let $V:=
C(B\sb{a})$ as in (3). Then $V$ is a C-block and $a\in B\sb{a}\subseteq
V$.
\end{enumerate}
In all four cases (1)--(4), $V$ is a commutative sub-synaptic algebra
of $A$.
\end{remarks}
In our subsequent work, we use the next lemma routinely and without
explicit attribution.

\begin{lemma} \label{lm:absvalinV}
The following conditions are mutually equivalent{\rm: (i)} $a\in V
\Rightarrow|a|\in V$. {\rm(ii)} $a\in V\Rightarrow a\sp{+}\in V$. {\rm (iii)}
$a\in V\Rightarrow a\sp{-}\in V$. {\rm (iv)} $a,b\in V\Rightarrow a\sqcap b
\in V$. {\rm (v)} $a,b\in V\Rightarrow a\sqcup b\in V$.
\end{lemma}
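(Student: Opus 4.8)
The plan is to prove the five conditions equivalent by establishing a cycle of implications together with the mutual equivalence that follows from the identities in Lemma \ref{lm:TopL1}. The key algebraic facts I would lean on are the formulas $a\sp{+}=\tfrac12(|a|+a)$, $a\sp{-}=\tfrac12(|a|-a)$, $|a|=a\sp{+}+a\sp{-}$, $a=a\sp{+}-a\sp{-}$, and from Lemma \ref{lm:TopL1}\,(iv),(vi) the relations $a\sqcap b=b+(a-b)\sqcap 0=b-(a-b)\sp{-}$ and dually $a\sqcup b=a+(b-a)\sp{+}$, as well as $a\sqcup 0=a\sp{+}$, $a\sqcap 0=-a\sp{-}$, and $a\sqcup(-a)=|a|$, $a\sqcap(-a)=-|a|$. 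Because $V$ is a linear subspace, each of these identities expresses any one of the quantities $|a|,a\sp{+},a\sp{-},a\sqcap b,a\sqcup b$ in terms of the others plus elements already known to lie in $V$.

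Concretely, I would argue as follows. For (i)$\Leftrightarrow$(ii): if $a\in V$ then $a\sp{+}=\tfrac12(|a|+a)$, so closure under $|\cdot|$ gives closure under $a\mapsto a\sp{+}$; conversely $|a|=a\sp{+}+a\sp{-}$ and $a\sp{-}=(-a)\sp{+}\in V$ since $-a\in V$, so closure under $a\mapsto a\sp{+}$ gives $|a|=a\sp{+}+(-a)\sp{+}\in V$. The equivalence (ii)$\Leftrightarrow$(iii) is immediate from $a\sp{-}=(-a)\sp{+}$ and the fact that $V$ is a linear subspace. For (i)$\Rightarrow$(iv): $a\sqcap b=\tfrac12(a+b-|a-b|)$, and $a-b\in V$ forces $|a-b|\in V$, hence $a\sqcap b\in V$. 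Dually (i)$\Rightarrow$(v) from $a\sqcup b=\tfrac12(a+b+|a-b|)$. For (iv)$\Rightarrow$(ii): take $b=0$; then $a\sqcap 0=-a\sp{-}\in V$, so $a\sp{-}\in V$, i.e. (iii), which we have already shown is equivalent to (ii). Similarly (v)$\Rightarrow$(ii) via $a\sqcup 0=a\sp{+}$. Chaining these gives the full equivalence of (i)--(v).

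I would write this up as a short ring of implications, for instance (i)$\Rightarrow$(ii)$\Rightarrow$(iii)$\Rightarrow$(i), then (i)$\Rightarrow$(iv)$\Rightarrow$(ii) and (i)$\Rightarrow$(v)$\Rightarrow$(ii), each step being a single application of a displayed identity from Lemma \ref{lm:TopL1} together with the linearity of $V$. No appeal to the carrier map, to commutativity, or to the lattice structure is needed — this lemma is purely about the interplay of the absolute value, positive/negative parts, and the generalized infimum/supremum on a linear subspace.

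There is essentially no serious obstacle here: every implication reduces to rewriting one of the five quantities as a real-linear combination of elements already in $V$, using identities established earlier. The only point demanding a moment's care is making sure each identity is applied with the right substitution — e.g. that in (iv)$\Rightarrow$(ii) one must pass through $a\sp{-}$ (using $b=0$) and then invoke (iii)$\Leftrightarrow$(ii) rather than trying to extract $a\sp{+}$ directly, and dually that $0\in V$ is available precisely because $V$ is a linear subspace. So the write-up should be compact, with the bookkeeping of which identity yields which implication being the main thing to get right.
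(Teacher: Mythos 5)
Your proposal is correct and matches the paper's approach: the paper likewise derives all the equivalences directly from the identities $a\sp{+}=\frac12(|a|+a)$, $a\sp{-}=(-a)\sp{+}$, $a\sqcap b=\frac12(a+b-|a-b|)$, and $a\sqcup b=\frac12(a+b+|a-b|)$ together with the linearity of $V$ (the paper simply states that the proof ``follows easily'' from these facts). Your write-up just makes the implication cycle explicit.
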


\begin{proof}
The proof follows easily from the facts that $a\sp{+}=\frac12(|a|+a)$, $a\sp{-}=
(-a)\sp{+}$, $a\sqcap b=\frac12(a+b-|a-b|)$, and $a\sqcup b=\frac12(a+b+|a-b|)$.
\end{proof}

\begin{theorem} \label{th:Topveclat} {\rm{Cf. \cite[Proposition 3]
{TopVL}}}
Suppose that $V$ is a vector lattice. Then{\rm: (i)} If $a\in V\Rightarrow|a|
\in V$, then for all $a,b\in V$, $a\sqcap b=a\wedge\sb{V}b\in V$ and $a\sqcup
b=a\vee\sb{V}b\in V$. {\rm(ii)} The condition $a\in V\Rightarrow|a|\in V$
is equivalent to $a\in V \Rightarrow |a|=a\vee\sb{V}(-a)$.
\end{theorem}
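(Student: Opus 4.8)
The plan is to derive both parts from Theorem~\ref{th:SC2.4} (that $a\sqcap b$ is a \emph{maximal} lower bound in $A$ for $a$ and $b$) together with the equivalences recorded in Lemma~\ref{lm:absvalinV}. The governing idea is that, since $V$ carries the restricted order, any lower bound of $a,b$ computed in $V$ is in particular a lower bound in $A$; combined with $a\sqcap b\le a,b$ and its maximality, this pins $a\wedge_V b$ down to $a\sqcap b$.

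For part (i), assume $a\in V\Rightarrow|a|\in V$. By Lemma~\ref{lm:absvalinV}, this gives $a,b\in V\Rightarrow a\sqcap b\in V$ and $a\sqcup b\in V$, so only the identifications with the lattice operations of $V$ remain. Fix $a,b\in V$. Since $V$ is a vector lattice, $a\wedge_V b$ exists in $V\subseteq A$ and is a lower bound for $a,b$ in $A$; on the other hand $a\sqcap b\in V$ is a lower bound for $a,b$ in $V$ (it satisfies $a\sqcap b\le a,b$ by Lemma~\ref{lm:TopL1}(i)), so $a\sqcap b\le a\wedge_V b$. Hence $a\sqcap b\le a\wedge_V b\le a,b$ in $A$, and maximality of $a\sqcap b$ (Theorem~\ref{th:SC2.4}) forces $a\sqcap b=a\wedge_V b$. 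For the supremum, apply what was just proved to $-a,-b\in V$ to get $(-a)\sqcap(-b)=(-a)\wedge_V(-b)$, then negate and use the duality $-((-a)\sqcap(-b))=a\sqcup b$ of Lemma~\ref{lm:TopL1}(v) together with $-((-a)\wedge_V(-b))=a\vee_V b$; membership in $V$ was already noted.

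For part (ii), if $a\in V\Rightarrow|a|\in V$, then Lemma~\ref{lm:TopL1}(vi) gives $|a|=a\sqcup(-a)$, and by part (i) this equals $a\vee_V(-a)$. Conversely, if $|a|=a\vee_V(-a)$ for all $a\in V$, then, as $a,-a\in V$ and $V$ is a lattice, $a\vee_V(-a)\in V$, hence $|a|\in V$.

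I expect no real obstacle here: the one point requiring care is keeping straight the distinction between infima taken in $V$ and in $A$—a greatest lower bound in $V$ need not be the greatest lower bound in $A$, but it is at least a lower bound in $A$, which is precisely what makes the maximality of $a\sqcap b$ applicable. No topological or further structural input is needed.
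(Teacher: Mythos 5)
Your argument is correct and follows essentially the same route as the paper: both prove (i) by combining $a\sqcap b\in V$ (via Lemma \ref{lm:absvalinV}) with the maximality of $a\sqcap b$ as a lower bound in $A$ (Theorem \ref{th:SC2.4}) to squeeze $a\wedge\sb{V}b$ into equality, then obtain the supremum statement by the order-reversing map $a\mapsto -a$, and both deduce (ii) from (i) together with $|a|=a\sqcup(-a)$. Your explicit remark distinguishing lower bounds in $V$ from those in $A$ is exactly the point the paper's proof relies on implicitly.
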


\begin{proof} Let $V$ be a vector lattice.

(i) Assume that $a\in V\Rightarrow|a|\in V$ and $a,b\in V$. Then by
Theorem \ref{th:SC2.4} (ii), $a\sqcap b$ is a maximal lower bound for
$a$ and $b$ in $A$. Since $a\sqcap b\in V$ and $a\sqcap b\leq a,b$, we
have $a\sqcap b\leq a\wedge\sb{V}b\leq a,b$, whence $a\sqcap b=a\wedge
\sb{V}b$. Because $a\mapsto -a$ is an order-reversing linear automorphism
of $V$, it follows that $a\vee\sb{V}b=-((-a)\wedge\sb{V}(-b))=-((-a)
\sqcap(-b))=a\sqcup b$.

(ii) If $a\in V\Rightarrow|a|\in V$ and $a\in V$, then by (i), $a\vee
\sb{V}(-a)=a\sqcup(-a)=\frac12(0+|a+a|)=|a|$. The converse is obvious.
\end{proof}

\begin{remarks} \label{rms:traditional}
If $V$ is a vector lattice, then as mentioned above, there are traditional
definitions of the absolute value, the positive part, and the negative
part of an element $a\in V$; however these notions need not coincide with
the synaptic-algebra definitions of $|a|$, $a\sp{+}$, and $a\sp{-}$, which
may not even belong to $V$. However, by Theorem \ref{th:Topveclat}, if
$a\in V\Rightarrow|a|\in V$, then the traditional notions do in fact
agree with the corresponding synaptic-algebra notions.
\end{remarks}

In \cite[\S 3]{TopVL}, condition (iii) in the following lemma is
called property J.

\begin{lemma} \label{lm:J}
Suppose that $1\in V$ and $a\in V\Rightarrow |a|,\,a\dg\in V$. Then the
following conditions are equivalent{\rm: (i)} $V$ is commutative.
{\rm(ii)} $P\cap V$ is commutative. {\rm(iii)} $a,b\in V\sp{+}
\Rightarrow 0\leq a\odot b$.
\end{lemma}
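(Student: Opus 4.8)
The plan is to prove the cycle of implications (i) $\Rightarrow$ (ii) $\Rightarrow$ (iii) $\Rightarrow$ (i), using the spectral resolution machinery to reduce commutativity questions to projections and the Jordan-product criterion of Lemma~\ref{lm:apos&pCa} to handle the effect-algebra side.

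First, (i) $\Rightarrow$ (ii) is immediate: if $V$ is commutative, then in particular any two projections $p,q\in P\cap V$ satisfy $pq=qp$. For (ii) $\Rightarrow$ (iii), suppose $P\cap V$ is commutative and let $a,b\in V\sp{+}$. Since $1\in V$ and $V$ is closed under $a\mapsto a\sp{+}$ and $a\mapsto a\dg$, the remark preceding Theorem~\ref{th:speccom} shows that the spectral projections $p\sb{a,\lambda}$ and $p\sb{b,\mu}$ all lie in $V\cap P$, hence commute pairwise by hypothesis. By Theorem~\ref{th:speccom}(ii) this gives $aCb$, and then $0\leq a\odot b = ab$ because the product of commuting positive elements is positive.

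For the main implication (iii) $\Rightarrow$ (i), assume $a,b\in V\sp{+}\Rightarrow 0\leq a\odot b$ and take arbitrary $a,b\in V$; I want $aCb$. By Theorem~\ref{th:speccom}(ii) it suffices to show that the spectral projections of $a$ commute with those of $b$, and since these projections all lie in $V\cap P\subseteq V\sp{+}$, the hypothesis (iii) applies to any two of them. So the task reduces to: if $p,q\in V\cap P$, then $pCq$. Here is where Lemma~\ref{lm:apos&pCa} enters — but that lemma is stated for a projection and a general positive element, so I can apply it directly with the positive element being $q$: from $0\leq p\odot q$ (which is hypothesis (iii) applied to $p,q\in V\sp{+}$) we conclude $pCq$. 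That closes the loop.

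I expect the main obstacle to be bookkeeping rather than depth: one must be careful that (iii) is a statement about \emph{all} pairs in $V\sp{+}$, and to verify that the spectral projections genuinely lie in $V$ one needs all three closure hypotheses ($1\in V$, closure under $|\cdot|$, closure under ${}\dg$) together with the formula $p\sb{a,\lambda}=1-((a-\lambda)\sp{+})\dg$ and the fact (Lemma~\ref{lm:absvalinV}) that closure under $|\cdot|$ is equivalent to closure under $a\mapsto a\sp{+}$. Once that membership is in hand, the reduction of $aCb$ to commutativity of spectral projections via Theorem~\ref{th:speccom}, and the reduction of projection-commutativity to positivity of the Jordan product via Lemma~\ref{lm:apos&pCa}, are both off-the-shelf. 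The only genuinely substantive ingredient is Lemma~\ref{lm:apos&pCa}, which is already proved above.
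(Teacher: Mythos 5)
Your proof is correct and uses exactly the same ingredients as the paper's: membership of the spectral projections in $P\cap V$ (via $1\in V$ and closure under $a\mapsto a\sp{+}$ and $a\mapsto a\dg$), Theorem \ref{th:speccom}(ii) to reduce commutativity to spectral projections, and Lemma \ref{lm:apos&pCa} to recover $pCq$ from $0\leq p\odot q$. The only difference is organizational---the paper proves (ii) $\Rightarrow$ (i), (i) $\Rightarrow$ (iii), and (iii) $\Rightarrow$ (ii) separately, so the spectral argument is invoked once rather than twice as in your cycle---but this is immaterial.
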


\begin{proof}
(i) $\Leftrightarrow$ (ii). Trivially, (i) $\Rightarrow$ (ii).
Conversely, assume (ii) and let $a,b\in V$. If $\lambda\in\reals$,
then since $V$ is a linear subspace of $A$, $1\in V$, and $V$ is
closed under the formation of positive parts and carriers, we have
$p\sb{a,\lambda}=1-((a-\lambda)\sp{+})\dg\in V$, whence the spectral
resolution $(p\sb{a,\lambda})\sb{\lambda\in\reals}$ is contained in
$P\cap V$. Likewise, the spectral resolution $(p\sb{b,\lambda})\sb
{\lambda\in\reals}$ is contained in $P\cap V$, and since $P\cap V$
is a commutative set, it follows $p\sb{a,\lambda}Cp\sb{b,\mu}$ for
all $\lambda,\mu\in\reals$. Consequently, $aCb$ by Theorem \ref
{th:speccom} (ii), proving (i).

(i) $\Rightarrow$ (iii). If $aCb$, then $a\odot b=ab$, so (i)
$\Rightarrow$ (iii) is obvious.

(iii) $\Rightarrow$ (ii). Assume (iii) and let $p,q\in V\cap P$.
Then $0\leq p\odot q$, whence $pCq$ by Lemma \ref{lm:apos&pCa}.
\end{proof}

In \cite[\S 3]{TopVL}, Topping refers to condition (ii) in the
next theorem as property R and remarks that the implication
(i) $\Rightarrow$ (ii) is ``classical."

\begin{theorem}  {\rm{Cf. \cite[Lemmas 6 and 10]{TopVL}}}
\label{th:PropR}
Consider the following conditions{\rm:}
\begin{enumerate}
\item $V$ is a vector lattice.
\item If $a,b,c\in V\sp{+}$ and $c\leq a+b$, then there exist
 $a\sb{1}, b\sb{1}\in V\sp{+}$ with $a\sb{1}\leq a$, $b\sb{1}\leq b$,
 and $c=a\sb{1}+b\sb{1}$ {\rm(the Riesz Decomposition Property (RDP))}.
\item If $a,b,c,d\in V$ and $a,b\leq c,d$, then there exists $g\in V$
 with $a,b\leq g\leq c,d$ {\rm(the Riesz Interpolation Property)}.
\item If $a,b\in V\sp{+}$ and $ab=0$, then $a\wedge\sb{V}b$ exists and
 $a\wedge\sb{V}b=0$.
\end{enumerate}
Then {\rm(i)} $\Rightarrow$ {\rm(ii)} $\Leftrightarrow$ {\rm(iii)}
$\Rightarrow$ {\rm(iv)}. Moreover, if $a\in V\Rightarrow|a|\in V$,
then {\rm(i)}, {\rm(ii)}, {\rm(iii)}, and {\rm(iv)} are mutually
equivalent.
\end{theorem}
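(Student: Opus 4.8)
The plan is to establish the cycle of implications indicated in the statement, treating the equivalences $\mathrm{(ii)}\Leftrightarrow\mathrm{(iii)}$ as purely order-theoretic facts about partially ordered abelian groups, and leaving the interesting work for the implications that genuinely use the synaptic-algebra structure. First I would prove $\mathrm{(i)}\Rightarrow\mathrm{(ii)}$: in any lattice-ordered group RDP holds, so given $a,b,c\in V^{+}$ with $c\le a+b$, set $a_{1}:=c\wedge_{V}a$ and $b_{1}:=c-a_{1}$; then $0\le a_{1}\le a$, and $b_{1}=c-(c\wedge_{V}a)=c\vee_{V}a-a\le(a+b)-a=b$ using the standard identity $x+y=x\wedge y+x\vee y$ (Dedekind's law, which holds here by Lemma \ref{lm:TopL1}(iii)) together with $c\vee_V a \le a+b$ since $c,a\le a+b$. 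This gives the required decomposition. Next, $\mathrm{(ii)}\Leftrightarrow\mathrm{(iii)}$ is the classical equivalence of RDP and the Riesz interpolation property in an ordered group: given $a,b\le c,d$ in $V$, apply RDP to the positive elements $c-a$, $d-a$, $c-b$, $d-b$ appropriately and translate back by $a$; conversely interpolation gives decompositions by interpolating between $0,c-a_1$ and the relevant bounds. These are routine translations and I would cite or sketch them only briefly.

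For $\mathrm{(iii)}\Rightarrow\mathrm{(iv)}$: suppose $a,b\in V^{+}$ with $ab=0$. The candidate infimum is $0$, which is certainly a lower bound. Let $c\in V$ with $c\le a,b$; I must show $c\le 0$. Here is where the synaptic-algebra machinery enters: since $ab=0$ we have, by Lemma \ref{lm:TopL3}(i) applied with the roles $c:=a$, $d:=b$ (and noting $ab=0\Rightarrow ba=0$), that $0\le x\le a$ and $0\le y\le b$ force $xy=0$; but more directly I would invoke $a\sqcap b$. By Theorem \ref{th:TopL2}, $ab=ba=0$ with $a,b\ge 0$ gives $a\sqcap b=0$, and by Theorem \ref{th:SC2.4} $a\sqcap b$ is a \emph{maximal} lower bound for $a$ and $b$ in $A$. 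Now $c\sqcup 0\in A$ also satisfies $c\sqcup 0\le a\sqcup 0$ and $\le b\sqcup 0$? — that is not quite it. Instead: $c\le a,b$ implies $c^{+}=c\sqcup 0\le a,b$ as well (since $a,b\ge 0\ge$ nothing is automatic — rather $c\le a$ and $0\le a$ give $c\sqcup 0\le a$ by Lemma \ref{lm:TopL1}(i)-type reasoning, similarly for $b$), so $0=a\sqcap b\le c\sqcup 0$, and since $0$ is a maximal lower bound and $c\sqcup 0$ is also a lower bound with $0\le c\sqcup 0$, maximality forces $c\sqcup 0=0$, i.e. $c^{+}=0$, hence $c\le 0$. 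Therefore $0=a\wedge_{V}b$.

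Finally, for the last assertion, assume $a\in V\Rightarrow|a|\in V$; it suffices to prove $\mathrm{(iv)}\Rightarrow\mathrm{(i)}$, since then all four are equivalent via the chain already shown. This is the main obstacle. Given arbitrary $a,b\in V$, I want $a\wedge_{V}b$ to exist. Translating, it is enough to show that any $a\in V$ has a positive part that serves as $a\vee_V 0$, i.e. that $a\vee_V 0$ exists; then bilinearity and the identity $a\wedge b = -((-a)\vee(-b))$, plus translation by $-b$, reduce the general case to this. So fix $a\in V$ and consider $a^{+},a^{-}\in V$ (these lie in $V$ by Lemma \ref{lm:absvalinV}), with $a^{+}a^{-}=0$. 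By hypothesis (iv), $a^{+}\wedge_{V}a^{-}=0$ exists. The claim is that this disjointness, together with $a=a^{+}-a^{-}$, forces $a^{+}=a\vee_{V}0$: indeed $a^{+}\ge a$ and $a^{+}\ge 0$, and if $c\in V$ with $c\ge a,0$ then $c-a^{+}\ge a-a^{+}=-a^{-}$ and $c-a^{+}\ge -a^{+}$, so $a^{+}-c\le a^{+},a^{-}$, whence $(a^{+}-c)^{+}\le a^{+},a^{-}$ (using that $a^{+},a^{-}\ge 0$) giving $(a^{+}-c)^{+}\le a^{+}\wedge_V a^{-}=0$, so $a^{+}\le c$. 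Thus $a^{+}=a\vee_{V}0$. Then for general $a,b\in V$, $a\vee_{V}b=(a-b)^{+}+b$ exists and lies in $V$, and $a\wedge_{V}b=a+b-a\vee_{V}b$ exists, so $V$ is a vector lattice. The delicate point throughout is the repeated use of ``$c\ge a$ and $c\ge 0$ implies $c\ge a\sqcup 0 = a^{+}$'' — but this is exactly Theorem \ref{th:SC2.4}/Lemma \ref{lm:TopL1}(i): $a\sqcup 0$ need not be the supremum in $A$, yet any common upper bound $c$ of $a$ and $0$ dominates $a\sqcup 0$ because $a\sqcup 0$ is the least such in the relevant ``local'' sense — more carefully, $a^+\le c$ follows since $2a^+ = a+|a| = a + (a\sqcup(-a)) \le a + (c \sqcup c) + \ldots$; I will instead argue via $c\ge a, c\ge -a^-$ (as $-a^-\le 0\le c$) so $2c\ge a-a^- = a^+ - 2a^-$, which is awkward, so the clean route is: $c-a\ge0$ and $c-0\ge0$, apply Lemma \ref{lm:TopC2}(i) or directly note $c\sqcap a\ge$ something — in the write-up I will route this cleanly through the maximal-lower-bound property of Theorem \ref{th:SC2.4} rather than ad hoc inequalities.
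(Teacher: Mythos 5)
Your treatment of (i) $\Rightarrow$ (ii) $\Leftrightarrow$ (iii) matches the paper: the same choice $a\sb{1}:=c\wedge\sb{V}a$, $b\sb{1}:=c-a\sb{1}$, and an appeal to the classical equivalence of RDP and interpolation (the paper cites Goodearl for the latter). The genuine gap is in your proof of (iii) $\Rightarrow$ (iv): it never uses hypothesis (iii) at all, and instead rests on the principle that $c\leq a$ and $0\leq a$ imply $c\sqcup 0\leq a$ --- equivalently, that $x\sqcup y$ is a \emph{least} upper bound of $x$ and $y$ in $A$. That principle is false. Theorem \ref{th:SC2.4} and its dual only make $a\sqcap b$ a \emph{maximal} lower bound and $a\sqcup b$ a \emph{minimal} upper bound, and maximality of $0$ among lower bounds of $\{a,b\}$ does not force an arbitrary lower bound $c$ to satisfy $c\leq 0$. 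Indeed, for $V=A={\mathcal B}\sp{sa}(\mathfrak H)$ and $a=p$, $b=1-p$ with $p$ a nontrivial projection, Kadison's antilattice theorem shows $a\wedge\sb{V}b$ does not exist even though $ab=0$; so no argument that ignores (iii) can prove (iv). The paper's proof is essentially different at this point: it uses (iii) to interpolate $0,c\leq g\leq a,b$ and then kills $g$ with Lemma \ref{lm:TopL3} (ii). You flag the difficulty yourself (``that is not quite it,'' ``which is awkward''), but the promised clean route through the maximal-lower-bound property does not exist.

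The same invalid principle reappears in your (iv) $\Rightarrow$ (i), in the step ``$a\sp{+}-c\leq a\sp{+},a\sp{-}$, whence $(a\sp{+}-c)\sp{+}\leq a\sp{+},a\sp{-}$.'' Here, however, the detour through the positive part is unnecessary: $a\sp{+}-c$ already lies in $V$ and is a lower bound of $a\sp{+}$ and $a\sp{-}$, so (iv) gives $a\sp{+}-c\leq a\sp{+}\wedge\sb{V}a\sp{-}=0$ directly, and your conclusion $a\sp{+}=a\vee\sb{V}0$ follows. (The paper reaches the same point even more economically by translating the existing infimum: $a\wedge\sb{V}0=(a\sp{+}\wedge\sb{V}a\sp{-})-a\sp{-}=-a\sp{-}$, then $c\wedge\sb{V}d=(a\wedge\sb{V}0)+d$.) So the final implication is salvageable with a one-line repair, but (iii) $\Rightarrow$ (iv) must be redone along the paper's lines.
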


\begin{proof}
(i) $\Rightarrow$ (ii) $\Leftrightarrow$ (iii). A proof that (i)
$\Rightarrow$ (ii) is easily obtained by putting $a\sb{1}:=c\wedge
\sb{V}a$ and $b\sb{1}:=c-a\sb{1}$. For a proof (in the context of
partially ordered abelian groups, but which works here too) that
(ii) $\Leftrightarrow$ (iii), see \cite[Proposition 2.1 (a) and (b)]
{Good}.

(iii) $\Rightarrow$ (iv). Assume (iii) and $0\leq a,b$ with $ab=0$.
Suppose that $c\in V$ with $c\leq a,b$. Then $0,c\leq a,b$, so by
(iii) there exists $g\in V$ with $0,c\leq g\leq a,b$. Thus, by
Lemma \ref{lm:TopL3} (ii), $c\leq g=0$, and it follows that
$a\wedge\sb{V}b=0$.

Finally, assume that $a\in V\Rightarrow|a|\in V$.

(iv) $\Rightarrow$ (i). Assume (iv), let $c,d\in V$, and put $a:=c-d$.
Since $0\leq a\sp{+}, a\sp{-}\in V$ and $a\sp{+}a\sp{-}=0$, we have
$a\sp{+}\wedge \sb{V}a\sp{-}=0$ by (iv), whence
\[
-a\sp{-}=0-a\sp{-}=(a\sp{+}\wedge\sb{V}a\sp{-})-a\sp{-}=(a\sp{+}-a\sp{-})
\wedge\sb{V}(a\sp{-}-a\sp{-})=a\wedge\sb{V}0.
\]
Thus, $d-a\sp{-}=(a\wedge\sb{V}0)+d=(a+d)\wedge\sb
{V}d=c\wedge\sb{V}d$. Replacing $c$ and $d$ by $-c$ and $-d$, we also
obtain $-((-c)\wedge\sb{V}(-d))=c\vee\sb{V}d$, so (i) holds.
\end{proof}

In \cite[\S 3]{TopVL}, conditions (i) and (ii) in the next lemma are
called properties O (Ogasawara's condition) and J$\sp{+}$, respectively.
The proof is adapted from \cite[Lemmas 6 and 11]{TopVL}.

\begin{lemma} \label{lm:O&Jplus}
Consider the following conditions{\rm:}
\begin{enumerate}
\item If $a,b\in V\sp{+}$, then $a\leq b\Rightarrow a\sp{2}\leq b\sp{2}$.
\item If $a,b\in V\sp{+}$, then $a\leq b\Rightarrow 0\leq a\odot b$.
\item If $a,b\in V\sp{+}$ and $0\leq a\odot b$, then $0\leq a\sqcap b$.
\item If $a,b\in V\sp{+}$ and $ab=0$, then $a\wedge\sb{V}b$ exists
 and $a\wedge\sb{V}b=0$.
\item $V$ is a vector lattice.
\end{enumerate}
Then {\rm(i)} $\Leftrightarrow$ {\rm(ii);}\ \ {\rm(i)} $\Rightarrow$
{\rm(iii);}\ \ {\rm(i)} $\Rightarrow$ {\rm[(ii)} and {\rm(iii)]} $
\Rightarrow$ {\rm(iv);}\ \ and if $a\in V\Rightarrow|a|\in V$, then
{\rm(iv)} $\Rightarrow$ {\rm(v)}.
\end{lemma}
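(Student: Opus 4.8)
The strategy is to move through the chain of implications using only the machinery already assembled, exploiting at each stage the duality between squares and square roots (Lemma \ref{lm:sqrtprops}) and the characterizations of the generalized infimum (Theorems \ref{th:TopL2}, \ref{th:SC2.4}). First I would prove the equivalence (i) $\Leftrightarrow$ (ii). Assume (i) and let $a,b\in V\sp{+}$ with $a\leq b$; I want $0\leq a\odot b$. The trick here is that since $a\leq b$ we have $0\leq b-a$ and $0\leq a$, so $(b-a)\sp{1/2}$ and $a\sp{1/2}$ exist in $A$; applying (i) to $a\leq b$ does not immediately give commutativity, so instead I would argue via the quadratic mapping: $a\odot b = \tfrac12(ab+ba)$, and compute $a\sp{1/2}(b-a)a\sp{1/2}\geq 0$ together with $a\sp{1/2}a\,a\sp{1/2} = a\sp{2}\geq 0$ and hypothesis (i) relating $a\sp{2}$ to $b\sp{2}$; more cleanly, write $2(a\odot b) = (a+b)\sp{2}-(a\sp{2}+b\sp{2})$ is not the right identity — rather $2(a\odot b) = (a+b)\sp{2} - a\sp{2} - b\sp{2}$, and apply (i) to $a\leq a+b$ inside $V\sp{+}$ (note $a+b\in V\sp{+}$) to get $a\sp{2}\leq(a+b)\sp{2}$, i.e. $a\sp{2}\leq a\sp{2}+2(a\odot b)+b\sp{2}$ — this only gives $0\leq 2(a\odot b)+b\sp{2}$, which is not enough. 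The correct route is: from $a\leq b$ and (i), $a\sp{2}\leq b\sp{2}$; then consider $b\,a\,b \in A\sp{+}$ (quadratic map is order-preserving) and $a\,b\,a\in A\sp{+}$; one shows $0\leq a\odot b$ by noting that $(a\odot b)$ lies in the order-unit space and using that $a\sp{1/2}\in CC(a)$ to reduce to the commutative case. I will lean on the analogue of \cite[Lemma 11]{TopVL} cited in the preamble. For (ii) $\Rightarrow$ (i): given $a\leq b$ in $V\sp{+}$, (ii) yields $0\leq a\odot b$, hence $0\leq ab+ba$; combined with $0\leq a(b-a)+( b-a)a = ab+ba-2a\sp{2}$ one gets $a\sp{2}\leq a\odot(a+b)$... again I would instead invoke Lemma \ref{lm:sqrtprops}(iii) after establishing $aCb$ is \emph{not} available, so the honest argument is the operator computation $b\sp{2}-a\sp{2} = b(b-a)+(b-a)a + [a,b-a]$-type manipulation; I expect this to reproduce Topping's Lemma 6 argument verbatim in the synaptic setting.

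For (i) $\Rightarrow$ (iii): assume (i) and let $a,b\in V\sp{+}$ with $0\leq a\odot b$. I want $0\leq a\sqcap b$. Since $a,b\geq 0$ we have $0\leq a+b$, and $0\leq a\odot b$ means $0\leq ab+ba$, so $-ab-ba\leq ab+ba$, giving $|a-b|\sp{2} = (a-b)\sp{2} = a\sp{2}-ab-ba+b\sp{2} \leq a\sp{2}+ab+ba+b\sp{2} = (a+b)\sp{2}$; this is exactly the hypothesis shape of Lemma \ref{lm:TC3}, whose proof uses MSR (Theorem \ref{th:MSR}) to pass from $|a-b|\sp{2}\leq(a+b)\sp{2}$ with $0\leq a+b$ to $|a-b|\leq a+b$, whence $0\leq\tfrac12(a+b-|a-b|)=a\sqcap b$. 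In fact (iii) here is just a restriction of Lemma \ref{lm:TC3} to $V\sp{+}$, so this step is immediate from an already-proved result — but note it uses MSR, not hypothesis (i), so strictly (iii) holds unconditionally; I would remark this and keep the stated implication.

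For [(ii) and (iii)] $\Rightarrow$ (iv): let $a,b\in V\sp{+}$ with $ab=0$. Then also $ba=0$ (carrier symmetry), so $a\odot b=0\geq 0$, and by (iii) $0\leq a\sqcap b$; but by Theorem \ref{th:TopL2}, $ab=ba=0$ with $a,b\geq 0$ forces $a\sqcap b=0$ outright. So $a\sqcap b=0$ and $a\sqcap b\leq a,b$. It remains to see $0 = a\sqcap b$ is the infimum \emph{in $V$}: suppose $c\in V$ with $c\leq a,b$; I must show $c\leq 0$. Here (ii) enters: $0\leq a-c$... this needs more care since $c$ need not be positive. I would instead use that $c\leq a,b$ gives $c\sp{+}\leq a,b$ (as $c\leq a$ and $0\leq a$ imply $c\sp{+}\leq a$, similarly $c\sp{+}\leq b$)? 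That implication is not obvious either. The clean path: $c\leq a$ and $c\leq b$, so by Theorem \ref{th:SC2.4} the generalized infimum controls things — actually the right tool is Lemma \ref{lm:TopL3}(ii): if $0\leq c'\leq a,b$ with $ab=0$ then $c'=0$; to reduce $c$ to something positive, note $c\leq a\sqcap b = 0$? No. The actual Topping argument (Lemma 6/Theorem \ref{th:PropR}(iv) proof, which I would mirror): one does \emph{not} need $c$ positive — one shows directly via the Riesz-type manipulation that the only lower bound issue is handled by $a\sp{2}\leq b\sp{2}$-style estimates. Given the near-identical statement already proved as Theorem \ref{th:PropR}((iii)$\Rightarrow$(iv)) modulo the RDP, I would cite the structure there. \textbf{The main obstacle} is precisely this (iv)-step: verifying that the generalized infimum $0$ really is the lattice infimum in $V$ for an \emph{arbitrary} lower bound $c\in V$ not assumed positive, using only the weak hypotheses (ii)/(iii) rather than full closure under $|\cdot|$; I expect to handle it by splitting $c = c\sp{+}-c\sp{-}$ only when $|\cdot|$-closure is available, and otherwise by the direct estimate that $c\leq a,b$ and $ab=0$ force $(c\sqcup 0)(c\sqcup 0)$-type products to vanish via Lemma \ref{lm:TopL3}. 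Finally, the last implication — if $a\in V\Rightarrow|a|\in V$ then (iv) $\Rightarrow$ (v) — is exactly the implication (iv) $\Rightarrow$ (i) already carried out in the proof of Theorem \ref{th:PropR}: reduce a general pair $c,d\in V$ to $a:=c-d$, use $a\sp{+},a\sp{-}\in V$ (available by Lemma \ref{lm:absvalinV}) with $a\sp{+}a\sp{-}=0$, apply (iv) to get $a\sp{+}\wedge\sb{V}a\sp{-}=0$, and translate to recover $c\wedge\sb{V}d = d - a\sp{-}$ and dually $c\vee\sb{V}d$, so $V$ is a vector lattice. I would simply reference that computation rather than repeat it.
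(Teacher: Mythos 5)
Your treatment of (i) $\Rightarrow$ (iii) (correctly observing that it is really an unconditional instance of Lemma \ref{lm:TC3}) and of the final implication (iv) $\Rightarrow$ (v) (deferring to the proof of Theorem \ref{th:PropR}) matches the paper. But the two remaining implications are left as acknowledged dead ends, and in both cases the missing idea is the same: apply the hypothesis not to the given pair $a\leq b$ but to the auxiliary pair $b-a\leq b+a$, both of which lie in $V\sp{+}$ because $V$ is a linear subspace. For (i) $\Rightarrow$ (ii): from $0\leq b-a\leq b+a$ and (i) you get $(b-a)\sp{2}\leq(b+a)\sp{2}$, whence $a\odot b=\frac14[(b+a)\sp{2}-(b-a)\sp{2}]\geq 0$. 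For (ii) $\Rightarrow$ (i): from $0\leq b-a\leq b+a$ and (ii) you get $0\leq(b-a)\odot(b+a)=b\sp{2}-a\sp{2}$. Your computations circle around this (you apply (i) to $a\leq a+b$, which, as you note, is not enough) and then fall back on ``I expect this to reproduce Topping's Lemma 6 argument verbatim,'' which is not a proof.

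The implication [(ii) and (iii)] $\Rightarrow$ (iv) is the other genuine gap, and you flag it yourself as the main obstacle without closing it. The paper's resolution: given $c\in V$ with $c\leq a,b$, note that $0\leq a\leq a-c+b$ with $a,\,a-c+b\in V\sp{+}$, so (ii) gives $0\leq a\odot(a-c+b)=a\odot(a-c)$ (since $a\odot b=ab=0$); then $a-c,\,a\in V\sp{+}$ with $0\leq(a-c)\odot a$, so (iii) gives $0\leq(a-c)\sqcap a=a+(0\sqcap(-c))=a-c\sp{+}$ by Lemma \ref{lm:TopL1} (iv)--(vi). Hence $c\sp{+}\leq a$, symmetrically $c\sp{+}\leq b$, so $c\sp{+}=0$ by Lemma \ref{lm:TopL3} (ii) and $c=-c\sp{-}\leq 0$. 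Note that $c\sp{+}$ is computed in $A$ here, so no closure of $V$ under $a\mapsto|a|$ is needed, which answers the worry you raise about arbitrary (non-positive) lower bounds. Citing the proof of Theorem \ref{th:PropR} (iii) $\Rightarrow$ (iv), as you propose, does not work: that argument runs through the Riesz interpolation property, which is not among the hypotheses available at this point.
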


\begin{proof}
(i) $\Rightarrow$ (ii). Assume (i) and $0\leq a\leq b$. Then $0\leq
b-a\leq b+a$, whence $(b-a)\sp{2}\leq(b+a)\sp{2}$, and it follows that
$a\odot b=\frac14[(b+a)\sp{2}-(b-a)\sp{2}]\geq 0$.

(ii) $\Rightarrow$ (i). Assume (ii) and $0\leq a\leq b$. Then $0\leq
b-a\leq b+a$, whence $0\leq(b-a)\odot(b+a)=b\sp{2}-a\sp{2}$.

\smallskip

(i) $\Rightarrow$ (iii). If (i) holds, then (iii) follows from Lemma
\ref{lm:TC3}.

\smallskip

[(ii) and (iii)] $\Rightarrow$ (iv). Assume (ii), (iii), and $a,b\in
V\sp{+}$ with $ab=0$. Also, suppose that $c\in V$ with $c\leq a,b$.
It will be sufficient to prove that $c\leq 0$. Since $ab=0$,
we have $aCb$ and $a\odot b=ab=0$. Clearly, $0\leq a\leq a-c+b$ and
$0\leq b\leq a+b-c$, whence by (iii), $0\leq a\odot(a-c+b)=a\odot
(a-c)+a\odot b=a\odot(a-c)=(a-c)\odot a$, and likewise $0\leq(b-c)
\odot b$. But $0\leq a-c\leq a$, so by (ii), $0\leq(a-c)\sqcap a
=a\sqcap(a-c)=a+(0\sqcap(-c))=a-(0\sqcup c)=a-c\sp{+}$, and we have
$0\leq c\sp{+}\leq a$. Similarly, $0\leq c\sp{+}\leq b$, and by
Lemma \ref{lm:TopL3} (i), $c\sp{+}=0$, so $c=-c\sp{-}\leq 0$.

\smallskip

Finally if $a\in V\Rightarrow|a|\in V$, then (iv) $\Rightarrow$ (v)
by Theorem \ref{th:PropR}.
\end{proof}

In \cite[\S 3]{TopVL}, condition (ii) of the following lemma is
called property SQ, and as mentioned above, condition (i) is
called property J, condition (iii) is called property P, and (iv)
is Ogasawara's condition O. The proof is adapted from \cite
[Proposition 6]{TopVL}. In the lemma, note that $a\odot b$,
$a\sp{2}$, and $b\sp{2}$ do not necessarily belong to $V$.

\begin{lemma} \label{lm:JSQP}
Suppose that $V$ satisfies the condition $a\in V\Rightarrow|a|
\in V$ and consider the following conditions for all $a,b\in
V${\rm:}
\begin{enumerate}
\item  $0\leq a,b\Rightarrow 0\leq a\odot b$.
\item $-b\leq a\leq b\Rightarrow a\sp{2}\leq b\sp{2}$.
\item $0\leq a,b\Rightarrow 0\leq a\sqcap b$.
\item $0\leq a\leq b\Rightarrow a\sp{2}\leq b\sp{2}$.
\end{enumerate}
Then {\rm(i)} $\Leftrightarrow$ {\rm(ii)} $\Leftrightarrow$ {\rm[(iii)}
and {\rm(iv)]}.
\end{lemma}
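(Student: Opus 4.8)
The plan is to establish the three implications \textrm{(i)}~$\Rightarrow$~\textrm{(ii)}, \textrm{(ii)}~$\Rightarrow$~[\textrm{(iii)} and \textrm{(iv)}], and [\textrm{(iii)} and \textrm{(iv)}]~$\Rightarrow$~\textrm{(i)}, which together give the claimed chain of equivalences. Throughout I will use freely that $a\in V\Rightarrow a^{+},a^{-},|a|\in V$ and that $a\sqcap b,a\sqcup b\in V$ for $a,b\in V$ (Lemma~\ref{lm:absvalinV}), and I will keep in mind the warning that $a\odot b$, $a^{2}$, $b^{2}$ themselves need not lie in $V$; all the squaring in conditions (ii) and (iv) is an inequality in $A$, not an assertion of membership in $V$.

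First I would prove \textrm{(i)}~$\Rightarrow$~\textrm{(ii)}. Assume (i) and suppose $a,b\in V$ with $-b\le a\le b$. Then $b-a$ and $b+a$ lie in $V^{+}$, so by (i) $0\le (b-a)\odot(b+a)$ in $A$; expanding the Jordan product gives $(b-a)\odot(b+a)=b^{2}-a^{2}$, hence $a^{2}\le b^{2}$. This is essentially the (ii)~$\Rightarrow$~(i) step of Lemma~\ref{lm:O&Jplus} read off for $V$. Next, \textrm{(ii)}~$\Rightarrow$~\textrm{(iv)} is immediate, since $0\le a\le b$ entails $-b\le a\le b$. For \textrm{(ii)}~$\Rightarrow$~\textrm{(iii)}: given $a,b\in V^{+}$, note $a-b\in V$ and $-(a+b)\le a-b\le a+b$ (because $0\le a,b$), so (ii) yields $(a-b)^{2}\le(a+b)^{2}$, i.e. $|a-b|^{2}\le(a+b)^{2}$; since $0\le a+b$, the MSR property (Theorem~\ref{th:MSR}) gives $|a-b|\le a+b$, hence $0\le\frac12(a+b-|a-b|)=a\sqcap b$. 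This uses MSR rather than a commutativity hypothesis, which is the point of having Theorem~\ref{th:MSR} available. So (ii)~$\Rightarrow$~[(iii) and (iv)].

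Finally I would prove [\textrm{(iii)} and \textrm{(iv)}]~$\Rightarrow$~\textrm{(i)}. Assume (iii) and (iv), and let $a,b\in V^{+}$; I must show $0\le a\odot b$, i.e. $0\le ab+ba$. Set $c:=a\sqcap b\in V$; by Lemma~\ref{lm:TopL1}(i) we have $c\le a,b$, and by Lemma~\ref{lm:TopL1}(ix), $(a-c)(b-c)=(a-c)\sqcap(b-c)=0$. Applying (iii) to $a,b\in V^{+}$ gives $0\le a\sqcap b=c$, so $0\le c\le a,b$ with $a-c,b-c\in V^{+}$. Now (iv) applied to $0\le c\le a$ gives $c^{2}\le a^{2}$, and similarly $c^{2}\le b^{2}$; more usefully, (iv) applied to $0\le a-c\le a$ and $0\le b-c\le b$ controls the cross terms. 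Expanding $ab+ba=2(a\odot b)$ in terms of $c$: since $(a-c)\odot(b-c)=\frac12[(a-c)(b-c)+(b-c)(a-c)]=0$, we get $a\odot b=a\odot c+c\odot b-c^{2}=a\odot c+c\odot b-c\odot c$. The task is then to show this is $\ge 0$ using (iv). Writing $a\odot c - c\odot c=(a-c)\odot c$ and $c\odot b-0$... here the cross terms are handled by applying (iv) in the form "$0\le x\le y$ in $V^{+}$ $\Rightarrow$ $x\odot y\ge 0$", which follows from (iv) exactly as (i)~$\Rightarrow$~(ii) of Lemma~\ref{lm:O&Jplus} is proved: $0\le x\le y$ gives $0\le y-x\le y+x$, hence $(y-x)^{2}\le(y+x)^{2}$ by (iv), hence $x\odot y=\frac14[(y+x)^{2}-(y-x)^{2}]\ge 0$. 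With $x:=a-c\ge 0$, $y:=a\ge x$, this gives $(a-c)\odot a\ge 0$, i.e. $a\odot c\le a^{2}$; doing the bookkeeping symmetrically for $b$ and assembling the pieces $a\odot c+c\odot b-c\odot c$ as a sum of Jordan products of comparable positive elements of $V$ yields $0\le a\odot b$, which is (i).

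The step I expect to be the genuine obstacle is the last one: organizing the identity $a\odot b=a\odot c+c\odot b-c\odot c$ (with $c=a\sqcap b$) so that the right-hand side visibly decomposes into Jordan products $x\odot y$ with $0\le x\le y$ in $V^{+}$, so that (iv) applies termwise. The symmetric roles of $a$ and $b$ and the cancellations coming from $(a-c)(b-c)=0$ must be exploited carefully; once the right grouping is found, each term is nonnegative by the "(iv)~$\Rightarrow$~$x\odot y\ge 0$" observation above, and the proof closes. The other implications are routine manipulations with squares, the Jordan product identity, Lemma~\ref{lm:TopL1}, and MSR.
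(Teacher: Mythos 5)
Your implications (i)~$\Rightarrow$~(ii), (ii)~$\Rightarrow$~(iii), and (ii)~$\Rightarrow$~(iv) are correct and coincide with the paper's arguments (the paper, too, uses MSR for (ii)~$\Rightarrow$~(iii)). The genuine gap is exactly where you flag it: the closing implication [(iii) and (iv)]~$\Rightarrow$~(i) is never actually proved. After setting $c:=a\sqcap b$ and reducing to $a\odot b=a\odot c+b\odot c-c^{2}$, the only estimates your method produces are $a\odot c\geq 0$, $b\odot c\geq 0$, and (from $0\leq a-c\leq a$) $a\odot c\leq a^{2}$ --- and none of these controls the subtracted term $c^{2}$; the inequality $a\odot c\leq a^{2}$ points in the wrong direction entirely. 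Moreover the termwise plan founders because $c$ and $a-c$ need not be comparable, so your ``J$^{+}$'' consequence of (iv) does not apply to $(a-c)\odot c$.

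The missing idea is to apply (iii) to the pair $(a,b)$ itself rather than to introduce $c$: the statement $0\leq a\sqcap b$ is literally $|a-b|\leq a+b$, and since $|a-b|\in V$ one can apply (iv) to the \emph{comparable} pair $0\leq|a-b|\leq a+b$ to get $(a-b)^{2}=|a-b|^{2}\leq(a+b)^{2}$, whence $a\odot b=\tfrac14[(a+b)^{2}-(a-b)^{2}]\geq 0$. (This is how the paper closes the cycle; it actually proves [(iii) and (iv)]~$\Rightarrow$~(ii) via the identity $(b-a)\sqcap(b+a)=b-|a|$ and then observes that its proof of (ii)~$\Rightarrow$~(i) uses only (iii) and (iv).) Your decomposition can in fact be rescued --- $a\odot c+b\odot c-c^{2}=(a+b-c)\odot c=(a\sqcup b)\odot(a\sqcap b)$ with $0\leq a\sqcap b\leq a\sqcup b$ in $V$ --- but since $a\sqcup b\pm a\sqcap b$ are exactly $a+b$ and $|a-b|$, this is the paper's polarization argument in disguise, and the detour through $c$ buys nothing.
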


\begin{proof}
(i) $\Rightarrow$ (ii). Assume (i) and $-b\leq a\leq b$. Then $0\leq b-a,
b+a$, so $0\leq(b-a)\odot(b+a)=b\sp{2}-a\sp{2}$.

\smallskip

(ii) $\Rightarrow$ (i). Assume (ii) and $0\leq a,b$. Since (ii)
$\Rightarrow$ (iii), we have $|a-b|\leq a+b$. Since $|a-b|\in V$ and
(ii) $\Rightarrow$ (iv), it follows that $(a-b)\sp{2}=|a-b|\sp{2}
\leq(a+b)\sp{2}$, whence $a\odot b=\frac14[(a+b)\sp{2}-(a-b)\sp{2}]
\geq 0$.

\smallskip

(ii) $\Rightarrow$ (iii). Assume (ii) and $0\leq a,b$. Then $-(a+b)\leq
b-a\leq a+b$, so $(a-b)\sp{2}\leq(a+b)\sp{2}$, and by MSR, $|a-b|\leq
a+b$, whence $0\leq\frac12(a+b-|a-b|)=a\sqcap b$.

\smallskip

(ii) $\Rightarrow$ (iv). This is obvious.

\smallskip

[(iii) and (iv)] $\Rightarrow$ (ii).  Assume (iii), (iv), and $-b
\leq a\leq b$. Then $0\leq b-a, b+a$, so by (iii), $0\leq(b-a)
\sqcap(b+a)=b-|a|$, whence $|a|\leq b$ with $|a|\in V$. Thus, by
(iv), $a\sp{2}=|a|\sp{2}\leq b\sp{2}$.
\end{proof}

\begin{lemma} \label{lm:CondsforVL}
Suppose that $a\in V\Rightarrow |a|\in V$ and consider the following
conditions{\rm:}
\begin{enumerate}
\item $V$ is commutative.
\item If $a,b\in V$ and $a\leq b$, then $aCb$.
\item If $a,b\in V$ and $0,a\leq b$, then $a\sp{+}\leq b$.
\item If $a\in V$, then $a\vee\sb{V}0$ exists in $V$ and $a\sp{+}=
 a\vee\sb{V}0$.
\item If $a,b\in V$, then $a\vee\sb{V}b$ exists in $V$ and $a\vee\sb{V}b
 =(a-b)\sp{+}+b$.
\item $V$ is a vector lattice.
\item $E\cap V$ is an MV-effect algebra.
\item $P\cap V$ is commutative.
\end{enumerate}

Then {\rm(i)} $\Rightarrow$ {\rm(ii)} $\Rightarrow$ {\rm(iii)} $\Rightarrow$
{\rm(iv)} $\Rightarrow$ {\rm(v)} $\Rightarrow$ {\rm(vi)}. Moreover, if
$1\in V$, then {\rm(vi)} $\Rightarrow$ {\rm(vii)} $\Rightarrow$ {\rm(viii)}.
Finally, if $1\in V$ and $a\in V\Rightarrow a\dg\in V$, then conditions
{\rm(i)--\rm(viii)} are mutually equivalent.
\end{lemma}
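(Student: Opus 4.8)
The plan is to run through the displayed chain of implications in the order stated, using the standing hypothesis $a\in V\Rightarrow|a|\in V$ (equivalently, by Lemma~\ref{lm:absvalinV}, closure of $V$ under $a\mapsto a\sp{+}$, $a\mapsto a\sp{-}$, $\sqcap$, and $\sqcup$) throughout, and bringing in the extra hypotheses $1\in V$ and $a\in V\Rightarrow a\dg\in V$ only where the statement requires them. Several steps are purely formal. Implication (i)$\Rightarrow$(ii) is trivial. For (iii)$\Rightarrow$(iv): since $a\sp{+}\in V$, $0\leq a\sp{+}$, and $a\sp{+}=a+a\sp{-}\geq a$, the element $a\sp{+}$ is an upper bound of $\{a,0\}$ in $V$, and (iii) says every other upper bound dominates it, so $a\sp{+}=a\vee\sb{V}0$. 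For (iv)$\Rightarrow$(v), apply (iv) to $a-b\in V$ and transport the resulting supremum by the order-automorphism $x\mapsto x+b$ of $V$ to obtain $a\vee\sb{V}b=((a-b)\vee\sb{V}0)+b=(a-b)\sp{+}+b\in V$. For (v)$\Rightarrow$(vi), existence of all suprema $a\vee\sb{V}b$ forces existence of all infima via $a\wedge\sb{V}b=-((-a)\vee\sb{V}(-b))$, so $V$ is lattice ordered, i.e.\ a vector lattice.

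The first substantial step is (ii)$\Rightarrow$(iii). Suppose $a,b\in V$ with $0\leq b$ and $a\leq b$. By (ii) we have $aCb$, and since $a\sp{+}\in CC(a)$ this yields $a\sp{+}Cb$; then $p:=(a\sp{+})\dg\in CC(a\sp{+})$ commutes with $b$, and also with $a$, because $p$ fixes $a\sp{+}$ and (since $a\sp{+}a\sp{-}=0$) annihilates $a\sp{-}$. From $pa\sp{-}=0$ we get $pa=pa\sp{+}=a\sp{+}$, and from $pCb$ we get $(1-p)b=(1-p)b(1-p)\geq0$, i.e.\ $pb\leq b$, since $b\geq0$. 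Applying the order-preserving quadratic mapping $x\mapsto pxp$ to $a\leq b$ and using $pxp=px$ whenever $xCp$, we obtain $a\sp{+}=pa\leq pb\leq b$, which is (iii). I expect this to be the main obstacle: unlike the rest of the argument it is not a formal lattice manipulation but rests on the carrier and quadratic-mapping apparatus of the synaptic algebra, and one must be slightly careful, since $pbp\leq b$ can fail in the absence of commutativity.

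Next assume $1\in V$ and prove (vi)$\Rightarrow$(vii)$\Rightarrow$(viii). With $0,1\in V$ the set $E\cap V$ is a sub-effect algebra of $E$. If $V$ is a vector lattice then for $e,f\in E\cap V$ both $e\wedge\sb{V}f$ and $e\vee\sb{V}f$ lie in $E\cap V$ (being caught between $0$ and $1$) and are the infimum and supremum in $E\cap V$, so $E\cap V$ is a lattice effect algebra; moreover if $e\wedge\sb{V}f=0$ then, by the $\ell$-group identity, $e+f=(e\vee\sb{V}f)+(e\wedge\sb{V}f)=e\vee\sb{V}f\leq1$, so $e\perp f$. Thus disjoint pairs in $E\cap V$ are orthogonal, and $E\cap V$ is an MV-effect algebra by Definition~\ref{df:MV,OML,Bool-EA}, which is (vii). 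For (vii)$\Rightarrow$(viii): in an MV-effect algebra every pair of elements is compatible (Theorem~\ref{th:JencaP2.1}), so $p,q\in P\cap V$ are compatible in $E\cap V$; the elements witnessing this lie in $E$, so $p$ and $q$ are compatible in $E$, whence $pCq$ by Lemma~\ref{lm:compprojs}.

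Finally, assume in addition that $a\in V\Rightarrow a\dg\in V$. Then Lemma~\ref{lm:J} applies and gives immediately that commutativity of $P\cap V$ implies commutativity of $V$, which is (viii)$\Rightarrow$(i). This closes the cycle (i)$\Rightarrow$(ii)$\Rightarrow\cdots\Rightarrow$(vi)$\Rightarrow$(vii)$\Rightarrow$(viii)$\Rightarrow$(i) and establishes the mutual equivalence of (i)--(viii).
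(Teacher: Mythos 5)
Your proof is correct and follows essentially the same route as the paper's: the same carrier-based argument with $p=(a\sp{+})\dg$ for (ii)$\Rightarrow$(iii), the same formal steps for (iii)$\Rightarrow$(iv)$\Rightarrow$(v)$\Rightarrow$(vi), the same sub-effect-algebra and compatibility arguments for (vi)$\Rightarrow$(vii)$\Rightarrow$(viii), and the same appeal to Lemma~\ref{lm:J} to close the cycle. The only cosmetic differences are that you obtain the key inequality in (ii)$\Rightarrow$(iii) via the quadratic mapping $x\mapsto pxp$ rather than via products of commuting positive elements, and in (vi)$\Rightarrow$(vii) you invoke the general $\ell$-group identity $e+f=(e\vee f)+(e\wedge f)$ where the paper routes through Theorem~\ref{th:Topveclat} and Dedekind's law for $\sqcap$ and $\sqcup$.
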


\begin{proof} Assume that $a\in V\Rightarrow|a|\in V$. Obviously (i)
$\Rightarrow$ (ii).

\smallskip

(ii) $\Rightarrow$ (iii). Assume (ii) and let $a,b\in V$ with $0,a\leq b$.
By (ii), $aCb$. To prove (iii), we have to show that $a\sp{+}\leq b$. Put
$p:=(a\sp{+})\dg$. (We note that $p$ does not necessarily belong to $V$.)
Thus $pa\sp{+}=a\sp{+}$; $p\in CC(a\sp{+})$; and as $a\sp{+}\in CC(a)$, we
have $p\in CC(a)\subseteq C(a)$. Furthermore, as $a\sp{+}a\sp{-}=0$, it
follows that $pa\sp{-}=0$, whence $pa=p(a\sp{+}-a\sp{-})=pa\sp{+}=a\sp{+}$.
Also, as $aCb$ and $p\in CC(a)$, we have $pCb$, so $pC(b-a)$, and since $0
\leq p,b-a$, it follows that $0\leq p(b-a)$, whence $a\sp{+}=pa\leq pb$.
Moreover, as $bC(1-p)$ and $0\leq b,1-p$, we have $0\leq b(1-p)=b-bp$,
whence $bp\leq b$, and therefore $a\sp{+}\leq bp\leq b$.

\smallskip

(iii) $\Rightarrow$ (iv).  Assume (iii) and let $a\in A$. We have $a,0\leq
a\sqcup 0=a\sp{+}$. Moreover, by (iii), if $b\in V$ and $a,0\leq b$, we
have $a\sp{+}\leq b$, whence $a\sp{+}=a\vee\sb{V}0$.

\smallskip

(iv) $\Rightarrow$ (v). Assume (iv) and let $a,b\in V$. We claim that
$(a-b)\sp{+}+b$ is the supremum $a\vee\sb{V}b$ in $V$ of $a$
and $b$. As $a-b\leq(a-b)\sp{+}$, we have $a\leq(a-b)\sp{+}+b$. Clearly,
$b\leq(a-b)\sp{+}+b$. Suppose that $c\in V$ with $a,b\leq c$. We have to
show that $(a-b)\sp{+}+b\leq c$, i.e., that $(a-b)\sp{+}\leq c-b$. But
$a-b, 0\leq c-b$, and since $(a-b)\sp{+}$ is the supremum in $B$ of $a-b$
and $0$, it follows that $(a-b)\sp{+}\leq c-b$. Therefore $(a-b)\sp{+}+b=
a\vee\sb{V}b$.

\smallskip

(v) $\Rightarrow$ (vi). Assume (v) and let $a,b\in V$. Then the infimum
of $a$ and $b$ in $V$ is given by $a\wedge\sb{V}b=-((-a)\vee\sb{V}(-b))$,
whence $V$ is a vector lattice.

Now assume that $1\in V$.

(vi) $\Rightarrow$ (vii). Assume that $V$ is a vector lattice and put
$F:=E\cap V$. Clearly, $1\in F$ and $F$ is a sub-effect algebra of $E$, so
$F$ is an effect algebra in its own right under the restriction to $F$ of the
orthosummation $\oplus$ and the orthosupplementation $e\mapsto e\sp{\perp}=
1-e$ on $E$. Thus, for $e,f\in F$, $e\perp f$ in $F$ iff $e+f\leq 1$, in
which case $e\oplus f=e+f\in F$. If $e,f\in F$, then $0\leq e,f\leq 1$, so
$0\leq e\wedge\sb{V}f\leq e\vee\sb{V}f\leq 1$, whence $e\wedge\sb{V}f, e\vee
\sb{V}f\in F$, and therefore $F$ is a lattice effect algebra with $e\wedge
\sb{V}f=e\wedge\sb{F}f$ and $e\vee\sb{V}f=e\vee\sb{F}f\in F$.  Suppose that
$e,f\in F$ and $e\wedge\sb{F}f=e\wedge\sb{V}f=0$. By Theorem \ref{th:Topveclat}
(i), $e\sqcap f=e\wedge\sb{V}f=0$, $e\sqcup f=e\vee\sb{V}f$; hence $e+f=
e\sqcap f+e\sqcup f=e\sqcup f=e\vee\sb{V}f\leq 1$, and therefore $e\perp f$
in $F$. Thus, disjoint elements in $F$ are orthogonal, so by Theorem
\ref{th:JencaP2.1} and Definition \ref{df:MV,OML,Bool-EA}, $F$ is an MV-effect
algebra.

\smallskip

(vii) $\Rightarrow$ (viii). Assume (vii). Then $e$ is compatible with $f$
in $F=E\cap V$ for all $e,f\in F$ (Theorem \ref{th:JencaP2.1}). Clearly,
then, $e$ is compatible with $f$ in $E$ for all $e,f\in F$. In particular,
if $p,q\in P\cap V\subseteq F$, then $p$ is compatible with $q$ in $E$.
Therefore, by Lemma \ref{lm:compprojs}, $P\cap V$ is a commutative set.

Finally, assume that $1\in V$ and $a\in V\Rightarrow a\dg\in V$. Then
(viii) $\Leftrightarrow$ (i) by Lemma \ref{lm:J}.
\end{proof}

The following is our main theorem in this paper. In \cite[\S 3]{TopVL},
condition (viii) in the theorem is called property T. Related results
are obtained in \cite[Proposition 6, Theorem 2, and Corollary 9]{TopVL}.

\begin{theorem} \label{th:TopProp6}
Suppose that $1\in V$ and that $a\in V\Rightarrow |a|,\, a\dg\in V$. Then
the following conditions are mutually equivalent{\rm:}
\begin{enumerate}
\item $V$ is a vector lattice. {\rm(}Topping's property L{\rm)}
\item $V$ is commutative.
\item If $a,b\in V\sp{+}$ and $ab=0$, then $a\wedge\sb{V}b$ exists and
$a\wedge\sb{V}b=0$.
\item $a\in V\Rightarrow a\sp{+}=0\vee\sb{V}a$.
\item If $a,b,c\in V\sp{+}$ and $c\leq a+b$, then there exist
 $a\sb{1}, b\sb{1}\in V\sp{+}$ with $a\sb{1}\leq a$, $b\sb{1}\leq b$,
 and $c=a\sb{1}+b\sb{1}$.  {\rm(}RDP---Topping's property R{\rm)}
\item If $a,b\in V\sp{+}$, then $0\leq a\odot b$.  {\rm(}Topping's
 property J{\rm)}
\item If $a,b\in V$, then $-b\leq a\leq b\Rightarrow a\sp{2}\leq b\sp{2}$
 {\rm(}Topping's property SQ{\rm)}
\item If $a,b\in V$, then $|a+b|\leq|a|+|b|$. {\rm(}Topping's
 property T{\rm)}
\item If $a,b\in V$ and $-a\leq b\leq a$, then $|b|\leq a$.
\item If $a,b\in V\sp{+}$, then $0\leq a\sqcap b$. {\rm(}Topping's
 property P{\rm)}
\item If $a,b\in V$ and $0\leq a\leq b$, then $a\sp{2}\leq b\sp{2}$.
 {\rm(}Topping's property O{\rm)}
\item If $a,b\in V\sp{+}$, then $a\leq b\Rightarrow 0\leq a\odot b$.
 {\rm(}Topping's property J$\sp{+}${\rm)}
\end{enumerate}
\end{theorem}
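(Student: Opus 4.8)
The strategy is to assemble the many prior lemmas into a tight cycle of implications, exploiting the two standing hypotheses ($1\in V$ and $V$ closed under $|\cdot|$ and the carrier $\dg$) to close the loop back to commutativity. The backbone is: most of the twelve conditions have already been proved equivalent in pieces---Lemma~\ref{lm:CondsforVL} links (ii), (iii)=our~(iv), (vi)=our~(i) and $P\cap V$ commutative; Lemma~\ref{lm:J} links commutativity of $V$, commutativity of $P\cap V$, and property~J (our~(vi)); Lemma~\ref{lm:O\&Jplus} links J$^+$ (our~(xii)), O (our~(xi)), P (our~(x)), property~(iv) of that lemma, and vector-lattice-ness; Lemma~\ref{lm:JSQP} links J (our~(vi)), SQ (our~(vii)), and [P and O] (our~(x)+(xi)); Theorem~\ref{th:PropR} links vector lattice, RDP (our~(v)), Riesz interpolation, and property~(iv); and Lemma~\ref{lm:AbValProps} gives the equivalence (vii)$\Leftrightarrow$(viii)$\Leftrightarrow$(ix) among the triangle-inequality-type statements via $|a+b|\le|a|+|b|$ and $-b\le a\le b\Leftrightarrow|a|\le b$. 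So the bulk of the proof is bookkeeping: invoke each cited result with its hypotheses verified (all of which hold, since $a\in V\Rightarrow|a|\in V$ is the common running assumption and $1\in V$, $a\dg\in V$ are available).

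The plan is to organize it as one long implication chain. First I would run Lemma~\ref{lm:CondsforVL} to get (ii)$\Rightarrow$(iv)$\Rightarrow$(i), and note (i)$\Rightarrow$(iii) is the (vi)$\Rightarrow$(iv) direction of that lemma specialized, while (iii)$\Rightarrow$(i) follows from Theorem~\ref{th:PropR}(iv)$\Rightarrow$(i). Next, (i)$\Leftrightarrow$(v) is Theorem~\ref{th:PropR}(i)$\Leftrightarrow$(ii). For the ``square'' conditions: Lemma~\ref{lm:JSQP} gives (vi)$\Leftrightarrow$(vii)$\Leftrightarrow$[(x) and (xi)], and Lemma~\ref{lm:O\&Jplus} gives (xi)$\Leftrightarrow$(xii), (xi)$\Rightarrow$(x), and [(xii) and (x)]$\Rightarrow$(iv)$\Rightarrow$(i) (using $a\in V\Rightarrow|a|\in V$). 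For the triangle-inequality block, I would show (vii)$\Leftrightarrow$(ix) directly (both say $|a|\le b$ whenever $-b\le a\le b$, modulo that (vii) asserts $a^2\le b^2$, which via Lemma~\ref{lm:JSQP}(ii)$\Leftrightarrow$(iv)+(iii) unpacks to $|a|\le b$), and (viii)$\Rightarrow$(ix) by substituting suitable elements, with (ix)$\Rightarrow$(viii) following since $-(|a|+|b|)\le a+b\le|a|+|b|$ would need commutativity---so more cleanly I would route (viii) through commutativity: if $V$ is commutative then (viii) is Lemma~\ref{lm:AbValProps}(i), and conversely property~T applied cleverly forces compatibility of projections. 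Finally close the loop with Lemma~\ref{lm:J}: commutativity of $P\cap V$ $\Leftrightarrow$ (ii), tying everything back.

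The main obstacle I anticipate is the direction \textbf{property~T ((viii)) $\Rightarrow$ commutativity} (equivalently $\Rightarrow$ $P\cap V$ commutative): all the other implications are essentially direct citations, but getting commutativity \emph{out} of the subadditivity of the absolute value is the one genuinely Topping-style argument. The idea would be to take $p,q\in P\cap V$ and apply $|a+b|\le|a|+|b|$ to carefully chosen combinations like $a=p-q$ or $a=p+q-1$, using the identities from Section~\ref{sc:BasicProps} relating $|p-q|$, $p\vee q$, $p\wedge q$ when $pCq$, together with Lemma~\ref{lm:TopP21}(ii) which characterizes projections via $|1-p|\sqcap(1\sqcap p)=0$; then squaring the inequality and using that $V$ is closed under $|\cdot|$ to iterate should force $pq=qp$. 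Alternatively, one shows (viii)$\Rightarrow$(vii) (which is immediate: $-b\le a\le b$ with $b\ge0$ gives, via (viii) applied to $a$ and suitable partners, $|a|\le b$ hence $a^2\le b^2$ once we know (x)), thereby folding (viii) into the already-closed cluster containing commutativity. I would prefer this second route since it avoids any new projection gymnastics and just needs the observation that (viii)$\Rightarrow$(ix)$\Rightarrow$(vii) using Lemma~\ref{lm:AbValProps}-style manipulations, after which the chain is complete.
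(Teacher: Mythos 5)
Your bookkeeping for the cluster (i)--(vii) and for (xi)$\Leftrightarrow$(xii) matches the paper: Lemma \ref{lm:CondsforVL} handles (i)$\Leftrightarrow$(ii)$\Leftrightarrow$(iv), Theorem \ref{th:PropR} handles (iii) and (v), Lemma \ref{lm:J} gives (ii)$\Leftrightarrow$(vi), Lemma \ref{lm:JSQP} gives (vi)$\Leftrightarrow$(vii), and Lemma \ref{lm:O&Jplus} gives (xi)$\Rightarrow$(i) and (xi)$\Leftrightarrow$(xii). The forward direction (ii)$\Rightarrow$(viii),(ix),(x),(xi) via Lemmas \ref{lm:AbValProps}, \ref{lm:TopC2}, and \ref{lm:sqrtprops} is also as in the paper.

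The genuine gap is the return direction from the triangle-inequality conditions (viii), (ix) and from (x) back into the closed cluster --- exactly the step you flag as the anticipated obstacle, and neither of your two proposed routes works. Your first route (``apply $|a+b|\le|a|+|b|$ to $p-q$ or $p+q-1$ and iterate'') is only a gesture; no argument is given. Your second route is circular: you derive $|a|\le b$ from (ix) and then claim $a\sp{2}\le b\sp{2}$ ``once we know (x)'' (you in fact need property O, condition (xi)), but (x) and (xi) are among the conditions being proved equivalent and have not been established from (viii) or (ix). Likewise, you cannot use Lemma \ref{lm:AbValProps} to get (vii)$\Leftrightarrow$(viii)$\Leftrightarrow$(ix), since that lemma carries the hypothesis $aCb$, which is precisely what is not yet available. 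The paper's resolution is short and avoids all of this: each of (viii), (ix), (x) is shown to imply the statement ``$0,a\le b\Rightarrow a\sp{+}\le b$,'' which is condition (iii) of Lemma \ref{lm:CondsforVL} and hence yields (iv) and the whole commutative cluster. Concretely, for (viii) write $a=b-(b-a)$ so that $|a|\le|b|+|b-a|=2b-a$ and $a\sp{+}=\tfrac12(|a|+a)\le b$; for (ix) note $-(2b-a)\le a\le 2b-a$ so $|a|\le 2b-a$; for (x) compute $0\le b\sqcap(b-a)=b-a\sp{+}$. Identifying that target condition is the missing idea in your proposal.
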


\begin{proof}
Assume the hypotheses.

(i) $\Leftrightarrow$ (ii) $\Leftrightarrow$ (iii) $\Leftrightarrow$
(iv) $\Leftrightarrow$ (v) $\Leftrightarrow$ (vi) $\Leftrightarrow$
(vii).
That (i) $\Leftrightarrow$ (ii) $\Leftrightarrow$ (iv) follows from
the equivalence of parts (i), (iii), (iv), and (vi) of Lemma \ref
{lm:CondsforVL}. That (i) $\Leftrightarrow$ (iii) follows from
Theorem \ref{th:PropR}. That (i) $\Leftrightarrow$ (v) is a consequence
of Theorem \ref{th:PropR}, Lemma \ref{lm:J} yields (ii) $\Leftrightarrow$
(vi), and (vi) $\Leftrightarrow$ (vii) follows from Lemma \ref{lm:JSQP}.

\smallskip

(ii) $\Rightarrow$ (viii), (ix), (x), and (xi). Assume (ii).
Then (viix), (ix), (x), and (xi) follow from Lemmas \ref{lm:AbValProps}
(i), \ref{lm:AbValProps} (ii), \ref{lm:TopC2} (i), and Lemma
\ref{lm:sqrtprops} (iii), respectively.

\smallskip

(viii), (ix), (x) $\Rightarrow$ (iv). The arguments in the proof
of \cite[Proposition 6]{TopVL} work in the synaptic algebra $A$, but
for completeness, we reproduce them here. Thus, assume that $a,b\in V$
and $0,a\leq b$, i.e., $0\leq b, b-a$. We have to prove that each
condition (viii), (ix), (x) implies that $a\sp{+}\leq b$. We have
$a=b-(b-a)$, so if (vii) holds, then $|a|\leq|b|+|b-a|=2b-a$, whence
$a\sp{+}=\frac12(|a|+a)\leq b$. Also we have $-2b+a\leq a\leq 2b
-a$, so if (viii) holds, then $|a|\leq|2b-a|=2b-a$, and $a\sp{+}
=\frac12(|a|+a)\leq b$. Finally, if (ix) holds, then $0\leq b
\sqcap(b-a)=\frac12[b+b-a-|b-(b-a)|=b-\frac12(a+|a|)=b-a\sp{+}$,
and again $a\sp{+}\leq b$.

\smallskip

That (xi) $\Rightarrow$ (i) and (xi) $\Leftrightarrow$ (xii) follow
from Lemma \ref{lm:O&Jplus}.
\end{proof}

Now we focus on the case in which $A$ is commutative. Any commutative
synaptic algebra is a commutative, associative, Archimedean partially
ordered real linear algebra with order unit; it may be regarded as its
own enveloping algebra; it is a normed linear algebra under the
order-unit norm, and by the next theorem, its OML of projections is
a Boolean algebra.

\begin{theorem} \label{th:comsynalg}
The following conditions are mutually equivalent{\rm:}
\begin{enumerate}
\item The synaptic algebra $A$ is commutative.
\item Comparable elements in $A$ commute.
\item $A$ is a vector lattice.
\item $E$ is an MV-effect algebra.
\item Every pair of effects $e,f\in E$ is compatible in $E$.
\item Every pair of projections $p,q\in P$ is compatible in $P$.
\item $P$ is commutative.
\item $P$ is a Boolean algebra.
\end{enumerate}
\end{theorem}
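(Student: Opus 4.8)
The plan is to prove Theorem~\ref{th:comsynalg} by taking $V := A$ in the results already established, and then closing a small loop between the ``$V = A$'' conditions and the extra conditions (iv)--(viii) about $E$ and $P$. First I would observe that $A$ trivially satisfies the standing hypotheses of Theorem~\ref{th:TopProp6}: we have $1 \in A$, and $a \in A \Rightarrow |a|, a\dg \in A$. Hence, applying Theorem~\ref{th:TopProp6} with $V = A$, the conditions ``$A$ is a vector lattice'' (which is (iii) here) and ``$A$ is commutative'' (which is (i) here) are already equivalent, and (i) $\Leftrightarrow$ (ii) is immediate since comparable elements commute is exactly the $V = A$ instance of Lemma~\ref{lm:CondsforVL}(ii), which in that lemma is equivalent to commutativity of $V$. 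So the core equivalence (i) $\Leftrightarrow$ (ii) $\Leftrightarrow$ (iii) comes for free.

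Next I would handle the effect-algebra and projection conditions. For (iii) $\Rightarrow$ (iv): if $A$ is a vector lattice, then $E = E \cap A$ is an MV-effect algebra by Lemma~\ref{lm:CondsforVL}(vi)$\Rightarrow$(vii). For (iv) $\Leftrightarrow$ (v): this is just the definition of MV-effect algebra via Theorem~\ref{th:JencaP2.1}, condition (iv) of that theorem (every pair compatible). For (v) $\Rightarrow$ (vi): projections are effects, so compatibility of $p, q$ in $E$ gives compatibility in $P$ by Lemma~\ref{lm:compprojs}. For (vi) $\Leftrightarrow$ (vii): this is Lemma~\ref{lm:compprojs} again, (iii) $\Leftrightarrow$ (ii), quantified over all $p,q \in P$. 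For (vii) $\Rightarrow$ (i): if $P$ is commutative, then $P \cap A = P$ is commutative, so by Lemma~\ref{lm:CondsforVL} (the final clause, with $V = A$, noting $1 \in A$ and $A$ closed under carriers) condition (viii) there gives condition (i) there, i.e.\ $A$ is commutative. Finally (vii) $\Leftrightarrow$ (viii): by the last paragraph of Section~\ref{sc:Prelim}, a Boolean algebra is the same thing as an OML in which every pair of elements is compatible; since $P$ is an OML-effect algebra, $P$ is Boolean iff every pair in $P$ is compatible, i.e.\ iff $P$ is commutative by Lemma~\ref{lm:compprojs}.

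Assembling the implications, I would present the cycle (i) $\Rightarrow$ (iii) $\Rightarrow$ (iv) $\Rightarrow$ (v) $\Rightarrow$ (vi) $\Rightarrow$ (vii) $\Rightarrow$ (i), note (i) $\Leftrightarrow$ (ii) separately (via Theorem~\ref{th:TopProp6} or Lemma~\ref{lm:CondsforVL}), and note (vii) $\Leftrightarrow$ (viii) separately, which closes everything. Honestly, there is no real obstacle: the theorem is essentially a corollary of Theorem~\ref{th:TopProp6} and Lemma~\ref{lm:CondsforVL} applied to $V = A$, glued to the standard dictionary between lattice effect algebras, MV-effect algebras, OMLs, and Boolean algebras recorded in the Preliminaries. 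The only point requiring a moment's care is making sure the cited lemmas are invoked with the correct quantifier structure — e.g.\ that ``$P$ is commutative'' literally means $pCq$ for all $p,q$, which matches the hypothesis ``$P \cap V$ is commutative'' in Lemma~\ref{lm:CondsforVL}(viii) — and confirming that $A$ itself (not a proper subspace) legitimately plays the role of $V$, which it does since all of properties (i)--(iv) of Remarks~\ref{rm:PropsofV} hold for $V = A$.
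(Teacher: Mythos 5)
Your proposal is correct and follows essentially the same route as the paper: the paper likewise derives the bulk of the equivalences by applying Lemma~\ref{lm:CondsforVL} with $V=A$ and then closes the loop through compatibility, Lemma~\ref{lm:compprojs}, and the standard OML/Boolean dictionary. The only cosmetic difference is that the paper writes out a direct argument for (iii)~$\Rightarrow$~(iv) (via products of commuting effects) and for (vii)~$\Rightarrow$~(i) (via spectral resolutions and Theorem~\ref{th:speccom}), whereas you delegate both steps to Lemma~\ref{lm:CondsforVL}, whose proof contains the same ideas.
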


\begin{proof}
(i) $\Leftrightarrow$ (ii) $\Leftrightarrow$ (iii)
$\Leftrightarrow$ (v) $\Leftrightarrow$ (vii) follows immediately
from Lemma \ref{lm:CondsforVL}.

(iii) $\Rightarrow$ (iv). Assume (iii). Then (i) also holds. Let $e,f
\in E$. Then we have $ef=fe$; moreover $0\leq e,f\leq 1$, so $0\leq e
\wedge\sb{V}f\leq e\vee\sb{V}f\leq 1$, whence $e\wedge\sb{V}f, \vee
\sb{V}f\in E$. Therefore $E$ is a lattice effect algebra with $e\wedge
\sb{E}f=e\wedge\sb{V}f$ and $e\vee\sb{E}f=e\vee\sb{V}f$. We claim that
$e$ and $f$ are compatible in $E$. By \cite[Lemma 2.7]{P&ESyn}, $ef=fe
\in $ E with $ef\leq e,f$. Therefore, $e-ef, f-ef, ef\in E$ with $(e-ef)
+(f-ef)+ef=e+f-ef=e+f(1-e)$. But $fC(1-e)$, so again by \cite[Lemma 2.7]
{P&ESyn}, $f(1-e)\in E$ with $f(1-e)\leq 1-e$, and therefore $(e-ef)+
(f-ef)+ef\leq e+(1-e)=1$. Thus, as $e=(e-ef)+ef$ and $f=(f-ef)+ef$, it
follows that $e$ and $f$ are compatible in $E$. Consequently, $E$ is
an MV-effect algebra, and we have (iii) $\Rightarrow$ (iv).

(iv) $\Rightarrow$ (v) $\Rightarrow$ (vi) $\Leftrightarrow$ (vii)
$\Leftrightarrow$ (viii). Obviously, (iv) $\Rightarrow$ (v), and (v)
$\Rightarrow$ (vi). That (vi) $\Leftrightarrow$ (vii) follows from
Lemma \ref{lm:compprojs} and that (vii) $\Leftrightarrow$ (viii) is
well-known in the theory of OMLs.

(vii) $\Leftrightarrow$ (i). If (vii) holds and $a,b\in A$, then the
projections in the spectral resolutions of $a$ and $b$ commute
with each other, whence $aCb$ by Theorem \ref{th:speccom} (ii). Thus,
(vii) $\Rightarrow$ (i), and the converse implication is obvious.
\end{proof}

In view of Theorem \ref{th:Topveclat} (i), Theorem \ref{th:comsynalg}
has the following immediate corollary.

\begin{corollary} \label{co:gensupinf}
If $A$ is comutative, then for $a,b\in A$, $a\wedge\sb{A}b=a\sqcap b$
and $a\vee\sb{A}b=a\sqcup b$.
\end{corollary}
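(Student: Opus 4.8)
The plan is to read this off directly from the two results just established, taking $V$ to be all of $A$. First, since $A$ is commutative, Theorem~\ref{th:comsynalg} ((i)~$\Rightarrow$~(iii)) shows that $A$ is a vector lattice, so the infimum $a\wedge\sb{A}b$ and the supremum $a\vee\sb{A}b$ exist for every $a,b\in A$.

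Next I would check that $A$ itself satisfies the hypothesis ``$a\in V\Rightarrow|a|\in V$'' of Theorem~\ref{th:Topveclat}. This is automatic: for any $a\in A$ we have $a\sp{2}\in A\sp{+}$, hence $|a|=(a\sp{2})\sp{1/2}\in A\sp{+}\subseteq A$, as recorded in Section~\ref{sc:BasicProps}. Thus $V:=A$ is a vector lattice closed under the absolute-value map.

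With $V:=A$, Theorem~\ref{th:Topveclat}~(i) applies verbatim and gives $a\sqcap b=a\wedge\sb{A}b$ and $a\sqcup b=a\vee\sb{A}b$ for all $a,b\in A$, which is precisely the assertion. There is essentially no obstacle: the only point to notice is that the hypothesis of Theorem~\ref{th:Topveclat} holds trivially for $V=A$, while Theorem~\ref{th:comsynalg} supplies the lattice ordering. (Alternatively one could argue directly, using Lemma~\ref{lm:TopL1}~(i) to get $a\sqcap b\leq a,b$ and Theorem~\ref{th:SC2.4} to see that $a\sqcap b$ is a maximal lower bound, hence equal to the infimum once one exists, and dually for $a\sqcup b$; but invoking Theorem~\ref{th:Topveclat} is the cleaner route.)
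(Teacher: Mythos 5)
Your proposal is correct and matches the paper's own argument exactly: the paper derives the corollary by combining Theorem~\ref{th:comsynalg} (commutativity implies $A$ is a vector lattice) with Theorem~\ref{th:Topveclat}~(i) applied to $V=A$, which is precisely your route. Your added remark that the hypothesis $a\in V\Rightarrow|a|\in V$ holds trivially for $V=A$ is a useful detail the paper leaves implicit.
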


A functional representation for commutative synaptic algebras can be
found in \cite[Theorem 4.1]{PSyn}.

\end{document}